\newtheorem{Def}{Definition}[section]
\newtheorem{lem}[Def]{Lemma}
\newtheorem{theo}[Def]{Theorem}
\newtheorem{pro}[Def]{Proposition}
\newtheorem{rem}[Def]{Remark}
\newcommand{\lam}{\lambda_2-\lambda_1}
\newcommand{\mcal}{\mathcal}
\newcommand{\mscr}{\mathscr}
\newcommand{\mbb}{\mathbb}
\newcommand{\mbf}{\mathbf}
\newcommand{\ud}{\mathrm d}
\numberwithin{equation}{section}
\begin{document}

\title[LDP]{Numerically asymptotical preservation of the 
large deviations principles for  invariant measures of  Langevin equations}

\author{Jialin Hong}
\address{Academy of Mathematics and Systems Science, Chinese Academy of Sciences, Beijing
	100190, China; School of Mathematical Sciences, University of Chinese Academy of
	Sciences, Beijing 100049, China}
\email{hjl@lsec.cc.ac.cn}

\author{Diancong Jin}
\address{Academy of Mathematics and Systems Science, Chinese Academy of Sciences, Beijing
	100190, China; School of Mathematical Sciences, University of Chinese Academy of
	Sciences, Beijing 100049, China}
\email{diancongjin@lsec.cc.ac.cn (Corresponding author)}

\author{Derui Sheng}
\address{Academy of Mathematics and Systems Science, Chinese Academy of Sciences, Beijing
	100190, China; School of Mathematical Sciences, University of Chinese Academy of
	Sciences, Beijing 100049, China}
\email{sdr@lsec.cc.ac.cn}

\author{Liying Sun}
\address{Academy of Mathematics and Systems Science, Chinese Academy of Sciences, Beijing
	100190, China; School of Mathematical Sciences, University of Chinese Academy of
	Sciences, Beijing 100049, China}
\email{liyingsun@lsec.cc.ac.cn}

\thanks{This work is supported by National Natural Science Foundation of China (Nos.  11971470, 11871068, 11926417, 12031020 and 12022118).}

\keywords{
large deviations principle, invariant measure, asymptotical preservation, Langevin equations, numerical methods.
}

\begin{abstract}
	In this paper, we focus on two kinds of large deviations principles (LDPs) of the invariant measures  of Langevin equations and their numerical methods, as the noise intensity $\epsilon\to 0$ and the dissipation intensity $\nu\to\infty$ respectively. First, by proving the weak LDP and the exponential tightness, we conclude that the invariant measure $\{\mu_{\nu,\epsilon}\}$ of the exact solution satisfies the LDPs as $\epsilon\to0$ and $\nu\to\infty$ respectively. 
	Then, we study whether there exist numerical methods asymptotically preserving these two LDPs of  $\{\mu_{\nu,\epsilon}\}$ in the sense that the rate functions of invariant measures of numerical methods converge pointwise to the rate function of $\{\mu_{\nu,\epsilon}\}$ as the step-size tends to zero. The answer is positive for the linear Langevin equation.
	For the small noise case, we show that a large class of numerical methods can asymptotically preserve the LDP of $\{\mu_{\nu,\epsilon}\}_{\epsilon>0}$ as $\epsilon\to0$. For the strong dissipation case, we study the stochastic $\theta$-method ($\theta\in[1/2,1]$) and show that only the  midpoint scheme ($\theta=1/2$) can asymptotically preserve the LDP of $\{\mu_{\nu,\epsilon}\}_{\nu>0}$ as $\nu\to\infty$. These results indicate that in the linear case, the LDP  as $\epsilon\to0$ and the LDP as $\nu\to\infty$ for the invariant measures of numerical methods have intrinsic differences: 
	the common numerical methods can asymptotically preserve the LDP of $\{\mu_{\nu,\epsilon}\}_{\epsilon>0}$ as $\epsilon\to0$ while the asymptotical preservation of numerical methods for the LDP of $\{\mu_{\nu,\epsilon}\}_{\nu>0}$ as $\nu\to\infty$ depends on the choice of numerical methods. To the best of our knowledge, this is the first result of investigating the relationship between the LDPs of invariant measures of stochastic differential equations and those of their numerical methods.
\end{abstract}

\maketitle

\section{Introduction}
\label{Sec1}Langevin equations have been widely applied to various systems driven by stochastic forcing, such as  chemical interactions, molecular simulations and quantum systems (see e.g. \cite{Langevin2,Langevin1}). For example, they are used to  describe the noise-induced transport
in stochastic ratchets and  dissipative particle dynamics
(see e.g. \cite{Langevin3} and references therein).
In this paper, we consider the  following $2$-dimensional  Langevin equation with deterministic initial value $(P(0),Q(0))=(p,q)$:
\begin{align}\label{Lan}
\ud P(t)&=-\nabla V(Q(t))\ud t-\nu P(t)\ud t+\sqrt{\epsilon}\ud W(t),\nonumber\\
\ud Q(t)&=P(t)\ud t,
\end{align}
where $W$ is a one-dimensional standard Wiener process defined on a complete filtered probabilities $\left(\Omega,\mscr F,\{\mscr F_t\}_{t\geq0},\mbf P\right)$ with $\{\mscr F_t\}_{t\geq0}$ satisfying the usual conditions.
$V:\mbb R\to\mbb R$ is a smooth potential.  The constants $\nu>0$ and $\epsilon>0$ denote the dissipation intensity and noise intensity  respectively. The equation \eqref{LDP} can  describe the motion of a particle according to Newton's second law and is  subject to  stochastic forcing and friction (see e.g. \cite{Fokker}).  It is well-known that \eqref{Lan} is ergodic and  the unique invariant measure $\mu_{\nu,\epsilon}$ is the Boltzmann-Gibbs probability measure given by
\begin{align*}
\ud \mu_{\nu,\epsilon}=\frac{1}{Z_{\nu,\epsilon}}\exp\left\{-\frac{2\nu}{\epsilon}\left(\frac{1}{2}p^2+V(q)\right)\right\}\ud p\ud q,
\end{align*}
where $Z_{\nu,\epsilon}=\int_{\mbb R^2}e^{-\frac{2\nu}{\epsilon}\left(\frac{1}{2}p^2+V(q)\right)}\,\ud p\ud q$ is the normalizing constant (see e.g., \cite[Proposition 6.1]{Fokker}). In this work, we focus on  investigating the LDP of $\{\mu_{\nu,\epsilon}\}$ as $\epsilon\to0$ and $\nu\to\infty
$ respectively, and what kind of numerical methods can asymptotically preserve the LDP of $\{\mu_{\nu,\epsilon}\}$.

The invariant measure $\mu_{\nu,\epsilon}$ describes the 
long-time behavior of \eqref{Lan}. Further, one may be interested in the asymptotical behavior of $\mu_{\nu,\epsilon}$ as the parameter $\nu$ or $\epsilon$ tends to $0$ or $\infty$. For example, when the values that $\mu_{\nu,\epsilon}$ assigns to measurable subsets are of exponential-type estimates as $\epsilon\to0$, the LDP of $\{\mu_{\nu,\epsilon}\}$ is involved. Roughly, the LDP characterizes the asymptotical behavior of a family of probability measures on an exponential scale, which plays an  important role in many fields such as the statistical physics and information theory (see e.g. \cite{LDP08}). The LDPs of invariant measures of various SDEs as the noise intensity $\epsilon\to0$ are studied in recent years
(see e.g. \cite{ZC15,Rockner04,GPP13,MaXi18,Sower92} and references therein).  In most of their work, the general strategy to derive the LDP of the invariant measure is based on the sample path large deviations of the original equations and the characterization of the action function, also called quasi-potential. Considering that the invariant measure $\mu_{\nu,\epsilon}$ is formulated explicitly in our case, we give a direct derivation for the LDP of  $\{\mu_{\nu,\epsilon}\}$ not based on the sample path large deviations of \eqref{Lan}. We not only study the LDP of $\{\mu_{\nu,\epsilon}\}_{\epsilon>0}$ in the small noise limit but also the LDP of $\{\mu_{\nu,\epsilon}\}_{\nu>0}$ in the strong dissipation limit. Our idea is to prove that $\{\mu_{\nu,\epsilon}\}_{\nu>0}$ or $\{\mu_{\nu,\epsilon}\}_{\epsilon>0}$ satisfies a weak LDP and is exponentially tight.
In the derivation of the weak LDP  of $\{\mu_{\nu,\epsilon}\}_{\nu>0}$, the main difficulty lies in how to prove that the limit $\lim_{\nu\to\infty}\frac{1}{\nu}\ln Z_{\nu,\epsilon}$ exists. We solve this problem
by means of the exponential-type estimate of
$\int_{|q|\geq L}e^{-\nu V(q)}\,\ud q$. By further verifying  the exponential tightness of $\{\mu_{\nu,\epsilon}\}_{\nu>0}$, we obtain the full LDP of $\{\mu_{\nu,\epsilon}\}_{\nu>0}$. Finally, a scaling argument is used to derive the LDP of $\{\mu_{\nu,\epsilon}\}_{\epsilon>0}$ from $\{\mu_{\nu,\epsilon}\}_{\nu>0}$.

Concerning the numerical approximations of the invariant measure of \eqref{Lan}, most authors are devoted to  constructing  numerical methods which inherit the invariant measure of the Langevin equation \eqref{Lan}, and analyzing the errors between the invariant measures of numerical methods and the original one (see e.g. \cite{HSW,Mattingly,Talay} and references therein). 
As far as we know, up to now, there has not been any work that studies the LDPs of invariant measures of numerical methods for stochastic differential equations (SDEs) in the existing literatures, this is one of our motivations. In this work, we aim to derive the LDPs of invariant measures of numerical methods for \eqref{Lan} and study the relationship between the LDPs of $\{\mu_{\nu,\epsilon}\}$ and its numerical counterparts $\{\mu^h_{\nu,\epsilon}\}$ with $h$ being the time step-size. 
The considered LDPs of $\{\mu_{\nu,\epsilon}\}$ and $\{\mu^h_{\nu,\epsilon}\}$ include two kinds of cases: one is as $\epsilon\to0$ which is called the LDP in the small noise limit; the other is as $\nu\to\infty$ which is called the LDP in the strong dissipation limit.

For an ergodic numerical method for \eqref{Lan} that possesses a unique invariant measure $\mu_{\nu,\epsilon}^h$, two natural questions are: whether $\mu_{\nu,\epsilon}^h$ also satisfies the LDP with the rate function $I^h$ as $\epsilon\to0$ or $\nu\to\infty$; if it does,  whether $I^h$ can approximate well the rate function of $\{\mu_{\nu,\epsilon}\}_{\epsilon>0}$ or $\{\mu_{\nu,\epsilon}\}_{\nu>0}$. Following the ideas in \cite{LDPosc,LDPschro}, we introduce the concept of asymptotical preservation of numerical methods for the LDP of $\{\mu_{\nu,\epsilon}\}_{\epsilon>0}$ or $\{\mu_{\nu,\epsilon}\}_{\nu>0}$ in the sense that the rate function $I^h$ converges pointwise to the rate function of $\{\mu_{\nu,\epsilon}\}_{\epsilon>0}$ or $\{\mu_{\nu,\epsilon}\}_{\nu>0}$ as $h\to0$. Roughly speaking, that a numerical method $\{P_n,Q_n\}_{n\geq0}$ asymptotically preserves the LDP of $\{\mu_{\nu,\epsilon}\}$ means that the exponential decay of $\mu_{\nu,\epsilon}(A)$ can be well approximated by $\mu_{\nu,\epsilon}^h(A)$, provided that $h$ is small enough, for a given measurable set $A\subseteq\mbb R^2$.
For a general smooth  potential $V$, constructing ergodic numerical methods for \eqref{Lan} is still under investigation. In addition, deriving the LDPs of invariant measures of ergodic numerical methods is challenging due to the  following two reasons: one is
the lack of explicit expressions of the invariant measures; the second is that since  the sample path of $\{P_n,Q_n\}_{n\geq0}$ is not continuous, whether it is still feasible to derive the LDP of $\{\mu_{\nu,\epsilon}^h\}$ based on the sample path LDP of $\{P_n,Q_n\}_{n\geq0}$.
But for some cases, the question can be simplified.  For example,  consider the case $\nabla V(0)=0$. 
Using the local linearization with $\nabla V(q)\approx \nabla^2 V(0)q$, one gets 
\begin{align*}
	\ud \hat P(t)&=-\nabla^2 V(0)\hat  Q(t)\ud t-\nu \hat  P(t)\ud t+\sqrt{\epsilon}\ud W(t),\nonumber\\
	\ud \hat  Q(t)&=\hat P(t)\ud t,
\end{align*}
which is viewed as an  approximation of \eqref{Lan} and reveals some local properties of \eqref{Lan}. In addition, some Langevin equations have a global linearization (see e.g., \cite{Linear}).
Thus, it is reasonable to illustrate the asymptotical behavior of the invariant measure of \eqref{Lan} in the linear case.
In this work, for the LDPs of invariant measures of numerical methods, we restrict our discussions to the linear case with $V(q)=\frac{1}{2}q^2$.

For the small noise case, we consider a  large class of  numerical methods which has at least first order convergence in mean-square sense (see \eqref{Mthd}).  By studying the weak limit of the distribution of this class of numerical methods, we prove that these numerical methods admit an invariant measure $\mu_{\nu,\epsilon}^h$ with $\nu$ being fixed. Further, we prove that for sufficiently small $h$, $\{\mu_{\nu,\epsilon}^h\}_{\epsilon>0}$ satisfies an LDP with the rate function $I^h$ by means of G\"artner--Ellis theorem. Finally  the convergence of $I^h$ gives that this class of numerical methods can asymptotically preserve the LDP of $\{\mu_{\nu,\epsilon}\}_{\epsilon>0}$ in the  small noise limit. 
The LDP of the invariant measures of $\{\mu_{\nu,\epsilon}^h\}_{\epsilon>0}$ in the strong dissipation limit is quite different from the small noise case. On the one hand, we note that $A$ and $b$ in the method \eqref{Mthd} will depend on the parameter $\nu$ for common numerical methods. Hence, the logarithmic moment generating functions of $\{\mu_{\nu,\epsilon}^h\}_{\nu>0}$ can not be explicitly given for \eqref{Mthd} in this case. On the other hand, whether numerical methods can asymptotically preserve the LDP of $\{\mu_{\nu,\epsilon}^h\}_{\nu>0}$ depends on the choice of the numerical method. 
More precisely, we study  the LDP of $\{\mu_{\nu,\epsilon}^h\}_{\nu>0}$ of stochastic $\theta$-method, $\theta\in[1/2,1]$,  and find that only the midpoint scheme can asymptotically preserve the LDP of $\{\mu_{\nu,\epsilon}\}_{\nu>0}$. 
These results indicate that in the linear case, the common numerical methods can asymptotically preserve the LDP of $\{\mu_{\nu,\epsilon}\}_{\epsilon>0}$ as $\epsilon\to0$ while only some specific ones possess stability in asymptotically preserving the LDP of  $\{\mu_{\nu,\epsilon}\}_{\nu>0}$ as $\nu\to\infty$.
As we know, we are the first to study the asymptotical preservation of numerical methods for the LDPs of invariant measures of SDEs.

The rest of this paper is organized as follows. Section \ref{Sec2} gives some preliminaries about the basic concepts and useful tools concerning the LDP and invariant measure. Section \ref{Sec3} is devoted to deriving the LDPs of $\{\mu_{\nu,\epsilon}\}$ in the small noise limit and strong dissipation limit respectively, under the case that $\nabla V$ may not be  globally Lipschitz. In Section \ref{Sec4}, we prove that a large class of numerical methods can asymptotically preserve the LDP of $\{\mu_{\nu,\epsilon}\}_{\epsilon>0}$ as $\epsilon\to0$ for the linear case. We study the LDP of the invariant measure of the stochastic $\theta$-method with $\theta\in[1/2,1]$ in the linear case in Section \ref{Sec5}, and show that the midpoint scheme can asymptotically preserve the LDP of $\{\mu_{\nu,\epsilon}\}_{\nu>0}$ as $\nu\to\infty$. Finally, the conclusions and  future work are presented in Section \ref{Sec6}.

\section{Preliminaries} \label{Sec2}
In this section, we introduce some basic concepts and useful tools in the theory of large deviations, which can  be found in \cite{ChenX,Dembo}. In addition, we also introduce some relative knowledge upon the invariant measure (readers can refer to \cite{Brehier,HongW} and their references). 
Throughout this section, let $\mathcal{X}$ be a separable Banach space and  $\mathcal{X}^*$ its dual space. In addition, let $\mscr B(\mcal X)$ be the Borel-$\sigma$ algebra of $\mcal X$. We first give the definitions of rate functions and LDP.

\begin{Def}\label{ratefun}
	$I:\mcal X\rightarrow[0,\infty]$ is called a rate function, if it is lower semi-continuous, i.e., for each $a\in[0,\infty)$, the level set $I^{-1}([0,a])$ is a closed subset of $\mcal X$. If all level sets $I^{-1}([0,a])$, $a\in[0,\infty)$, are compact, then $I$ is called a good rate function.
\end{Def}

\begin{Def}\label{LDP}
	Let $I$ be a rate function and $\{\mu_\epsilon\}_{\epsilon>0}$ be a family of probability measures on $\left(\mcal X,\mscr B(\mcal X)\right)$. We say that $\{\mu_\epsilon\}_{\epsilon>0}$ satisfies a (full) LDP with the rate function $I$  if
	\begin{flalign}
	(\rm{LDP 1})\qquad \qquad&\liminf_{\epsilon\to 0}\epsilon\ln(\mu_\epsilon(U))\geq-\inf I(U)\qquad\text{for every open subset}~ U\subseteq \mcal X,\nonumber&\\
	(\rm{LDP 2})\qquad\qquad &\limsup_{\epsilon\to 0}\epsilon\ln(\mu_\epsilon(C))\leq-\inf I(C)\qquad\text{for every closed subset}~ C\subseteq \mcal X.&\nonumber
	\end{flalign}
In addition, if (LDP2)  holds  for every compact subset $C\subseteq \mcal X$, we call that $\{\mu_{\epsilon}\}_{\epsilon>0}$ satisfies a weak LDP with the rate function $I$.
\end{Def}

Let $\{X_\epsilon\}_{\epsilon>0}$
be a family of random variables from $\left(\Omega,\mathscr{F},\mbf P\right)$ to $(\mcal X,\mscr{B}(\mcal X))$. Similarly, $\{X_\epsilon\}_{\epsilon>0}$  is said to satisfy an LDP with the rate function $I$, if its distribution law $(\mbf P\circ X_\epsilon^{-1})_{\epsilon>0}$  satisfies  (LDP$1$) and (LDP$2$) in Definition \ref{LDP}  (see e.g., \cite{Dembo}). As is shown in Definition \ref{LDP}, the LDP characterizes the asymptotical behavior of a family of probabilities on an exponential scale.
In order to strengthen a weak LDP to a full LDP, one usually needs to verify the so called exponential tightness. 

\begin{Def}\label{exptight}
	A family of probability measures $\{\mu_{\epsilon}\}_{\epsilon>0}$ on $\mcal X$ is exponentially tight if for every $a<\infty$, there exists a compact set $K_{a}\subseteq \mcal X$ such that 
	\begin{align*}
		\limsup_{\epsilon\to 0}\epsilon\ln\mu_{\epsilon}(K_{a}^c)<-a.
	\end{align*}
\end{Def}

\begin{pro}\cite[Lemma 1.2.18]{Dembo}\label{weakfull}
If $\{\mu_{\epsilon}\}_{\epsilon>0}$ on $\mcal X$ is exponentially tight and satisfies a weak LDP with a rate function $I$, then  $\{\mu_{\epsilon}\}_{\epsilon>0}$ satisfies the (full) LDP on $\mcal X$ with the rate function $I$, and $I$ is a good rate function.
\end{pro}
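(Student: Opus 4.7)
The plan is to upgrade the weak upper bound from compact sets to arbitrary closed sets using exponential tightness, and then to show separately that every sub-level set of $I$ is in fact compact. Both parts are elementary once the right set-theoretic decomposition is made.

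First I would fix an arbitrary closed set $C \subseteq \mcal X$ and an arbitrary number $a \in (0,\infty)$. By exponential tightness (Definition \ref{exptight}), there exists a compact set $K_a \subseteq \mcal X$ with $\limsup_{\epsilon \to 0}\epsilon \ln \mu_\epsilon(K_a^c) < -a$. From the inclusion $C \subseteq (C \cap K_a) \cup K_a^c$ one has
\begin{equation*}
\mu_\epsilon(C) \leq \mu_\epsilon(C \cap K_a) + \mu_\epsilon(K_a^c).
\end{equation*}
Combining this with the elementary inequality $\ln(x+y) \leq \ln 2 + \max\{\ln x, \ln y\}$ (with the convention $\ln 0 = -\infty$), and applying the weak LDP upper bound to the compact subset $C \cap K_a$, I would obtain
\begin{equation*}
\limsup_{\epsilon \to 0}\epsilon \ln \mu_\epsilon(C) \leq \max\Bigl\{-\inf_{x \in C \cap K_a}I(x),\; -a\Bigr\} \leq \max\Bigl\{-\inf_{x \in C}I(x),\; -a\Bigr\},
\end{equation*}
since $C \cap K_a \subseteq C$. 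Letting $a \to \infty$ then yields the full (LDP2) for the closed set $C$, which together with the assumed (LDP1) establishes the full LDP.

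Next I would show that $I$ is a good rate function, i.e. each sub-level set $\Psi_a := I^{-1}([0,a])$ is compact. Lower semi-continuity of $I$ makes $\Psi_a$ closed, so it suffices to embed it in a compact set. Fix any $\alpha > a$ and let $K_\alpha$ be the compact set furnished by exponential tightness. I claim $\Psi_a \subseteq K_\alpha$. Indeed, if some $x_0 \in \Psi_a \setminus K_\alpha$ existed, then $K_\alpha^c$ would be an open neighborhood of $x_0$, and (LDP1) would imply
\begin{equation*}
\liminf_{\epsilon \to 0}\epsilon \ln \mu_\epsilon(K_\alpha^c) \geq -\inf_{x \in K_\alpha^c}I(x) \geq -I(x_0) \geq -a > -\alpha,
\end{equation*}
contradicting the tightness estimate $\limsup_{\epsilon \to 0}\epsilon \ln \mu_\epsilon(K_\alpha^c) < -\alpha$. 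Hence $\Psi_a$ is a closed subset of the compact set $K_\alpha$, and is therefore compact.

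I do not expect a genuine obstacle here, since the statement is one of the classical building blocks of large-deviations theory. The only mildly delicate points are the degenerate case in which one of $\mu_\epsilon(C \cap K_a)$, $\mu_\epsilon(K_a^c)$ vanishes (dispatched by the convention $\ln 0 = -\infty$) and the fact that one must choose the tightness parameter strictly larger than the level $a$ in order to derive a contradiction in the goodness argument; both are easy to handle.
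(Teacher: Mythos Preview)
Your argument is correct and follows the standard textbook route (split $C$ into $C\cap K_a$ and $K_a^c$, use the weak upper bound on the compact piece and tightness on the complement, then use the lower bound on $K_\alpha^c$ to force $\Psi_a\subseteq K_\alpha$). Note, however, that the paper does not actually prove this proposition: it is quoted directly from \cite[Lemma 1.2.18]{Dembo} without proof, so there is no in-paper argument to compare against. Your proof is exactly the classical one found in that reference.
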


\begin{pro} \cite[Lemma 1.2.15]{Dembo} \label{limsup}
	Let $N$ be a fixed integer. Then,  for every $a^i_{\epsilon}\geq0$,
	\begin{align*}
	\limsup_{\epsilon\to 0}\epsilon\ln\left(\sum_{i=1}^{N}a^i_{\epsilon}\right)=\underset{i=1,\ldots,N}{\max}\limsup_{\epsilon\to 0}\epsilon\ln a^i_{\epsilon}.
	\end{align*}
\end{pro}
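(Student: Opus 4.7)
The plan is to sandwich the sum by its maximum on both sides, using the elementary inequality
\[
\max_{1\le i\le N} a_\epsilon^i \;\le\; \sum_{i=1}^N a_\epsilon^i \;\le\; N\cdot \max_{1\le i\le N} a_\epsilon^i
\]
valid for nonnegative reals, and then to exploit that the factor $N$ is harmless on the exponential scale because $\epsilon\ln N\to 0$ as $\epsilon\to 0$. The finiteness of $N$ is the whole reason the claim is true; without it the lemma fails.

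For the upper bound, I would apply $\epsilon\ln(\cdot)$ to the right sandwich inequality, absorb the vanishing term $\epsilon\ln N$, and reduce the problem to the interchange
\[
\limsup_{\epsilon\to 0}\max_{1\le i\le N}\epsilon\ln a_\epsilon^i \;\le\; \max_{1\le i\le N}\limsup_{\epsilon\to 0}\epsilon\ln a_\epsilon^i.
\]
This interchange is the only step worth isolating. Along any sequence $\epsilon_k\to 0$ realizing the left-hand limsup, the pigeonhole principle (valid because $N$ is finite) produces a subsequence on which a single index $i_*$ attains the maximum; the limit along that subsequence is then $\limsup_{\epsilon\to 0}\epsilon\ln a_\epsilon^{i_*}$, which is dominated by the right-hand side.

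For the lower bound, I would use the trivial inequality $\sum_{i=1}^N a_\epsilon^i \ge a_\epsilon^j$ for any fixed $j$, apply $\limsup_{\epsilon\to 0}\epsilon\ln(\cdot)$, and then take the maximum over $j\in\{1,\dots,N\}$. The only bookkeeping point is the convention $\ln 0:=-\infty$: if some $a_\epsilon^i$ vanish, their $-\infty$ entries are dominated by any finite entry on both sides, and if all $a_\epsilon^i$ vanish along a sequence of $\epsilon$'s, both sides equal $-\infty$. I do not anticipate any substantive obstacle; the essential content is the single observation $\epsilon\ln N\to 0$, and the one delicate point is the finite max/limsup interchange, which is handled entirely by finiteness of $N$.
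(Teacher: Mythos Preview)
Your argument is correct and is exactly the standard proof of this elementary lemma. Note that the paper does not supply its own proof of this proposition; it merely cites \cite[Lemma 1.2.15]{Dembo}, so there is nothing to compare against beyond observing that your sandwich argument $\max_i a_\epsilon^i \le \sum_i a_\epsilon^i \le N\max_i a_\epsilon^i$ together with $\epsilon\ln N\to 0$ is precisely the classical route.
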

Proposition \ref{limsup} is a useful tool in estimating (LDP1) and (LDP2). Next, we give the celebrated G\"artner--Ellis theorem, which plays an important role in establishing the LDPs for a family of  dependent random variables.

\begin{theo}\cite[Corollary 4.6.14]{Dembo}\label{GE}
	Let $\{\mu_\epsilon\}_{\epsilon>0}$ be an exponentially tight family of Borel probability measures on  $\mcal X$. Suppose the logarithmic moment generating function $\Lambda(\cdot)=\lim_{\epsilon\to 0}\epsilon\Lambda_{\mu_{\epsilon}}(\cdot/\epsilon)$ is finite valued and Gateaux differentiable, where $\Lambda_{\mu_{\epsilon}}(\lambda):=\ln\int_{\mcal X}e^{\lambda(x)}\mu_{\epsilon}(dx)$, $\lambda\in \mcal X^*$. Then  $\{\mu_\epsilon\}_{\epsilon>0}$ satisfies the LDP in $\mcal X$ with the convex, good rate function $\Lambda^*(x)=\underset{\lambda\in\mcal X^*}{\sup}\{\lambda(x)-\Lambda(\lambda)\}$.
\end{theo}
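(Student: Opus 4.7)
The plan is to verify the two conditions (LDP1) and (LDP2) of Definition \ref{LDP} for the candidate rate function $\Lambda^*$, which is automatically convex and lower semicontinuous as a supremum of continuous affine functionals on $\mcal X$. Its goodness (compactness of level sets) will follow once the weak LDP is in hand, by combining it with the exponential tightness hypothesis via Proposition \ref{weakfull}.

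For the upper bound (LDP2), the exponential Chebyshev inequality gives, for any $\lambda\in\mcal X^*$ and any Borel set $B\subseteq\{y:\lambda(y)\geq\alpha\}$, the estimate $\mu_\epsilon(B)\leq\exp(-\alpha/\epsilon+\Lambda_{\mu_\epsilon}(\lambda/\epsilon))$, so that $\limsup_{\epsilon\to 0}\epsilon\ln\mu_\epsilon(B)\leq -\alpha+\Lambda(\lambda)$. Given a compact set $K$ and $x\in K$, I would choose $\lambda_x\in\mcal X^*$ making $\lambda_x(x)-\Lambda(\lambda_x)$ close to $\min(\Lambda^*(x),M)$, then use continuity of $\lambda_x$ to pick an open neighborhood $U_x$ with $\inf_{y\in U_x}\lambda_x(y)\geq\lambda_x(x)-\delta$. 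Extracting a finite subcover of $K$ and applying Proposition \ref{limsup} yields the upper bound on $K$; exponential tightness then lifts the estimate to arbitrary closed sets.

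The lower bound (LDP1) is where the real work lies. For each open $G$ and each $x\in G$ with $\Lambda^*(x)<\infty$, the plan is to exploit the Gateaux differentiability of $\Lambda$ to produce an exposing functional, i.e.\ some $\lambda^*\in\mcal X^*$ with $\Lambda^*(x)=\lambda^*(x)-\Lambda(\lambda^*)$ and $x=\nabla\Lambda(\lambda^*)$ in the Gateaux sense. I would then introduce the tilted probability measures
\[
\frac{\ud\tilde\mu_\epsilon}{\ud\mu_\epsilon}(y)=\exp\bigl(\lambda^*(y)/\epsilon-\Lambda_{\mu_\epsilon}(\lambda^*/\epsilon)\bigr),
\]
verify that the logarithmic moment generating function of $\tilde\mu_\epsilon$ converges to the shifted function $\widetilde\Lambda(\eta)=\Lambda(\eta+\lambda^*)-\Lambda(\lambda^*)$, whose Legendre transform vanishes at $x$, and conclude that $\tilde\mu_\epsilon$ concentrates at $x$. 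Restricting the change-of-measure identity to a small neighborhood $U\subseteq G$ of $x$ then produces
\[
\epsilon\ln\mu_\epsilon(G)\geq -\lambda^*(x)+\epsilon\Lambda_{\mu_\epsilon}(\lambda^*/\epsilon)+\epsilon\ln\tilde\mu_\epsilon(U)\longrightarrow -\Lambda^*(x),
\]
and taking the infimum over $x\in G$ yields (LDP1).

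The main obstacle is justifying the concentration $\tilde\mu_\epsilon\to\delta_x$ in a general separable Banach space and ensuring that the infimum of $\Lambda^*$ over $G$ is approached along points admitting such an exposing hyperplane. This is precisely where global Gateaux differentiability of $\Lambda$ on all of $\mcal X^*$ enters: it guarantees that the map $\lambda\mapsto\nabla\Lambda(\lambda)$ produces a sufficiently rich family of exposed points, so that arbitrary $x\in G$ can be approximated along this family; a standard density-plus-lower-semicontinuity argument then passes the local lower bound to the full infimum $\inf_G\Lambda^*$.
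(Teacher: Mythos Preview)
The paper does not supply its own proof of this statement: Theorem \ref{GE} is quoted verbatim as \cite[Corollary 4.6.14]{Dembo} and is used as a black box in Sections \ref{Sec4} and \ref{Sec5}. So there is no ``paper's own proof'' to compare your proposal against.

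That said, your outline is essentially the standard proof one finds in Dembo--Zeitouni: the upper bound via exponential Chebyshev plus a finite subcover of compacts (then upgraded to closed sets by exponential tightness), and the lower bound via the tilted measures $\tilde\mu_\epsilon$ together with the exposed-points machinery that Gateaux differentiability of $\Lambda$ provides. One point to be careful about in the last paragraph: global Gateaux differentiability alone does not literally guarantee that every $x$ with $\Lambda^*(x)<\infty$ is exposed; what it gives (this is the content of the Dembo--Zeitouni argument you are alluding to) is that the set of exposed points is rich enough that $\inf_{G}\Lambda^*$ equals the infimum of $\Lambda^*$ over the exposed points in $G$, and this step requires a separate convexity/lower-semicontinuity argument rather than a simple density claim. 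Your sketch gestures at this but does not pin down the mechanism, so if you were writing this out in full that is the place needing the most care.
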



Let $d$ be a given positive integer and $X$ satisfy a $d$-dimensional stochastic differential equation. Denote by $X(t,x)$  the value of $X$ at time $t$ starting from $x\in\mbb R^d$. 
Let $\mbf B_b(\mbb R^d)$ denote the set of all bounded and measurable functions from $\mbb R^d$ to $\mbb R$, and $\mbf C_b(\mbb R^d)$ denote the set of all bounded and continuous functions from $\mbb R^d$ to $\mbb R$.
Define the transition semigroup $\{\mscr P_t\}_{t\geq 0}$ on $\mbf B_b(\mbb R^d)$ as 
\begin{align*}
	\mscr P_t\varphi(x)=\mbf E\varphi(X(t,x)),\qquad\forall\quad \varphi\in\mbf B_b(\mbb R^d).
\end{align*}
Then $\{\mscr P_t\}_{t\geq 0}$ is a Markov semigroup. In addition,  $\{\mscr P_t\}_{t\geq 0}$ is called \emph{Feller} if for any $t> 0$ and $\varphi\in\mbf C_b(\mbb R^d)$, one has $\mscr P_t\varphi\in\mbf C_b(\mbb R^d)$.

\begin{Def}\label{inv}
A probability measure on $\left(\mbb R^d,\mscr B(\mbb R^d)\right)$ is said to be invariant for 
$\{\mscr P_t\}_{t\geq 0}$ or $X$ if
\begin{align*}
	\int_{\mbb R^d}\mscr P_t\varphi(x)\,\mu(\ud x)=\int_{\mbb R^d}\varphi(x)\,\mu(\ud x),\qquad\forall \quad\varphi\in\mbf C_b(\mbb R^d)~\text{and}~t\geq0.
\end{align*}
\end{Def}

The following proposition gives a sufficient condition to ensure the existence and uniqueness of the invariant measure for $\{\mscr P_t\}_{t\geq 0}$.
\begin{pro}\label{pro2.8}
Assume that $\{\mscr P_t\}_{t\geq0}$ is Feller. If for any initial value $x\in\mbb R^d$, the law of $X(t,x)$ weakly converges to some probability measure $\mu$ on $\left(\mbb R^d,\mscr B(\mbb R^d)\right)$ as $t\to\infty$, then $\mu$ is the unique invariant measure of $\{\mscr P_t\}_{t\geq0}$ or $X$.
\end{pro}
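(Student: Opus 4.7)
The plan is to verify the two claims in succession: first that $\mu$ is invariant, then that it is the unique invariant measure. Both arguments rely on the interplay between the semigroup property, the Feller property, and the weak convergence hypothesis.

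For invariance, I would fix $\varphi\in\mbf C_b(\mbb R^d)$ and $t\geq 0$, and aim to show $\int_{\mbb R^d}\mscr P_t\varphi(x)\,\mu(\ud x)=\int_{\mbb R^d}\varphi(x)\,\mu(\ud x)$. The Feller property gives $\mscr P_t\varphi\in\mbf C_b(\mbb R^d)$, so by the weak convergence hypothesis applied to $\mscr P_t\varphi$,
\begin{equation*}
\int_{\mbb R^d}\mscr P_t\varphi(x)\,\mu(\ud x)=\lim_{s\to\infty}\mbf E\bigl[(\mscr P_t\varphi)(X(s,x_0))\bigr]=\lim_{s\to\infty}\mscr P_s(\mscr P_t\varphi)(x_0)
\end{equation*}
for any fixed $x_0\in\mbb R^d$. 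By the Markov semigroup property, $\mscr P_s(\mscr P_t\varphi)(x_0)=\mscr P_{s+t}\varphi(x_0)=\mbf E\varphi(X(s+t,x_0))$, and letting $s\to\infty$ the hypothesis applied to $\varphi$ itself yields the limit $\int_{\mbb R^d}\varphi(x)\,\mu(\ud x)$. This gives invariance per Definition \ref{inv}.

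For uniqueness, suppose $\widetilde\mu$ is another invariant measure. For any $\varphi\in\mbf C_b(\mbb R^d)$ and $t\geq 0$, invariance of $\widetilde\mu$ gives
\begin{equation*}
\int_{\mbb R^d}\varphi(x)\,\widetilde\mu(\ud x)=\int_{\mbb R^d}\mscr P_t\varphi(x)\,\widetilde\mu(\ud x)=\int_{\mbb R^d}\mbf E\varphi(X(t,x))\,\widetilde\mu(\ud x).
\end{equation*}
The integrand on the right is bounded uniformly in $t$ and $x$ by $\|\varphi\|_\infty$, and converges pointwise as $t\to\infty$ to $\int_{\mbb R^d}\varphi(y)\,\mu(\ud y)$ by the weak convergence hypothesis (applied to $\varphi$ at each fixed $x$). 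Dominated convergence then yields $\int\varphi\,\ud\widetilde\mu=\int\varphi\,\ud\mu$, and since this holds for all $\varphi\in\mbf C_b(\mbb R^d)$, we conclude $\widetilde\mu=\mu$.

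I do not anticipate a serious obstacle; the only delicate point is ensuring that the Feller hypothesis is actually needed in the first part (it is, in order to pull $\mscr P_t\varphi$ inside the weak-limit framework as a continuous bounded test function) and that the dominated convergence step in the uniqueness part is justified by the uniform bound $\|\mscr P_t\varphi\|_\infty\leq\|\varphi\|_\infty$. Both are routine consequences of the hypotheses, so the overall proof is short and directly structured around the two definitions recalled above.
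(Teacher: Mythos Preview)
Your proposal is correct and follows essentially the same approach as the paper: both invoke the Feller property to ensure $\mscr P_t\varphi\in\mbf C_b(\mbb R^d)$, apply the semigroup identity $\mscr P_s\mscr P_t\varphi=\mscr P_{s+t}\varphi$, and pass to the limit using the weak convergence hypothesis for invariance, then use dominated convergence against the invariance identity of a competing $\widetilde\mu$ for uniqueness. The paper's proof differs only cosmetically (it swaps the roles of the variables $s$ and $t$ and phrases the bound as $\|\mscr P_t\|_{\mcal L(\mbf B_b(\mbb R^d))}\leq 1$).
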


\begin{proof}
According to the assumption on $\mu$, it holds that
\begin{align}\label{sec2k1}
 \lim_{t\to\infty}\mscr P_t\varphi(x)=\lim_{t\to\infty}\mbf E\varphi(X(t,x))=\int_{\mbb R^d}\varphi \,\mu(dx),\qquad\forall \quad\varphi\in\mbf C_b(\mbb R^d)~\text{and}~x\in\mbb R^d.
\end{align}
For any $s\geq0$, since $\{\mscr P_t\}_{t\geq0}$ is \emph{Feller}, $\mscr P_s\varphi\in\mbf C_b(\mbb R^d)$ for any $\varphi\in\mbf C_b(\mbb R^d)$. By the semigroup property of $\mscr P_t$, we have
\begin{align*}
\mscr P_t\mscr P_s\varphi(x)=\mscr P_{s+t}\varphi(x),\qquad\forall \quad\varphi\in\mbf C_b(\mbb R^d)~\text{and}~x\in\mbb R^d.
\end{align*}
In view of \eqref{sec2k1}, putting $t\rightarrow \infty$ on both sides of the above identity produces
\begin{align*}
 \int_{\mbb R^d}\mscr P_s\varphi(x)\,\mu (dx)=\int_{\mbb R^d}\varphi(x)\,\mu (dx),\qquad\forall\quad \varphi\in\mbf C_b(\mbb R^d)~\text{and}~s\geq 0,
\end{align*}
i.e., $\mu$ is an invariant measure for $\{\mscr P_t\}_{t\geq0}$.

Assume that $\widetilde{\mu}$ is another invariant measure for $\{\mscr P_t\}_{t\geq0}$, then
\begin{align}\label{sec2k2}
\int_{\mbb R^d}\mscr P_t\varphi(x)\,\widetilde{\mu}(\ud x)=\int_{\mbb R^d}\varphi(x)\,\widetilde{\mu}(\ud x),\qquad\forall\quad \varphi\in\mbf C_b(\mbb R^d)~\text{and}~t\geq0.
\end{align}
Letting $t\to\infty$ in \eqref{sec2k2}, along with the dominated convergence theorem and \eqref{sec2k1}, yields
\begin{align*}
 \int_{\mbb R^d}\varphi(x)\,\mu(\ud x)=\int_{\mbb R^d}\varphi(x)\,\widetilde{\mu}(\ud x),\qquad\forall\quad \varphi\in\mbf C_b(\mbb R^d),
\end{align*}
where we have used the fact $\|\mscr P_t\|_{\mcal L(\mbf B_b(\mbb R^d))}\leq 1$ for any $t\geq 0$. 
The above formula implies $\mu=\widetilde{\mu}$,
which completes the proof.
\end{proof}
If $\{X_n\}$ is a discrete Markov's chain, for example, the numerical approximation  of $X$, corresponding discrete versions of  Definition \ref{inv} and Proposition \ref{pro2.8} are also valid.

\section{LDPs for the invariant measure of stochastic Langevin equation}\label{Sec3}

Throughout this paper, we assume that $V\in\mbf C^{\infty}\left(\mbb R,\mbb R\right)$ is a \emph{confining potential}, i.e., $\lim_{|q|\to+\infty}V(q)=+\infty$ and $e^{-\beta V(q)}\in  L^1(\mbb R)$ for any $\beta\in \mbb R^+$. In this case, there is some constant $C$ such that $V(q)+C\geq 1$ for any $q\in\mbb R$ and $V+C$ is also a confining potential.  As a consequence, $V+C$ satisfies the assumptions of \cite{GGMZ}, which implies that \eqref{Lan} admits a unique strong solution since $\nabla V=\nabla (V+C)$. 
Recall that \eqref{Lan} possesses a unique invariant measure $\mu_{\nu,\epsilon}$ given by
\begin{align*}
	\ud \mu_{\nu,\epsilon}=\frac{1}{Z_{\nu,\epsilon}}\exp\left\{-\frac{2\nu}{\epsilon}\left(\frac{1}{2}p^2+V(q)\right)\right\}\ud p\ud q,
\end{align*}
where $$Z_{\nu,\epsilon}=\int_{\mbb R^2}e^{-\frac{2\nu}{\epsilon}\left(\frac{1}{2}p^2+V(q)\right)}\,\ud p\ud q=\sqrt{\frac{\pi\epsilon}{\nu}}\int_{\mbb R}e^{-\frac{2\nu}{\epsilon} V(q)}\ud q.$$
In what follows, we study two kinds of LDPs of $\left\{\mu_{\nu,\epsilon}\right\}$: one is the LDP of $\left\{\mu_{\nu,\epsilon}\right\}_{\nu>0}$ as $\nu$ tends to infinity for any given $\epsilon>0$; the second one is the LDP of  $\left\{\mu_{\nu,\epsilon}\right\}_{\epsilon>0}$ as $\epsilon$ tends to zero for any given $\nu>0$. For this end, we need further assumption on $V$.

\textbf{Assumption 1.} Assume $V\in\mbf C^{\infty}(\mbb R,\mbb R)$ and there exist $\eta,\alpha>0$, $L_0\geq1$ such that $V(q)\geq\eta|q|^{\alpha}$ for all $|q|\geq L_0$. 

It is verified that under \textbf{Assumption 1}, $V$ is a confining potential. Define $I(p,q) := p^2 + 2V (q) $ for any $p,q \in\mbb R$.
Based on this assumption, we present the main results of this section.
\begin{lem}\label{lem3.1}
	Under \textbf{Assumption 1}, it holds that
	$$\lim_{\nu\to\infty}\frac{1}{\nu}\ln Z_{\nu,1}= \lim_{\nu\to\infty}\frac{1}{\nu}\ln\int_{\mbb R}e^{-2\nu V(q)}\ud q=Z_0,$$
	where $Z_0:=-2\inf_{q\in\mbb R}V(q)$. 
	\end{lem}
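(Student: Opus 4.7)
The plan is to first strip off the harmless prefactor: since $Z_{\nu,1}=\sqrt{\pi/\nu}\int_{\mbb R}e^{-2\nu V(q)}\ud q$, one has
\[
\tfrac{1}{\nu}\ln Z_{\nu,1} \;=\; \tfrac{1}{2\nu}\ln(\pi/\nu) + \tfrac{1}{\nu}\ln\int_{\mbb R}e^{-2\nu V(q)}\ud q,
\]
and the first term vanishes as $\nu\to\infty$, so both limits in the statement are automatically equal. It therefore suffices to prove $\lim_{\nu\to\infty}L_\nu = -2v^*$, where $L_\nu:=\tfrac{1}{\nu}\ln\int_{\mbb R}e^{-2\nu V(q)}\ud q$ and $v^*:=\inf_{q\in\mbb R}V(q)$. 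Note $v^*$ is finite and attained at some $q^*$: under \textbf{Assumption 1}, $V$ is continuous and coercive, so the infimum is achieved.

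For the lower estimate I would fix $\delta>0$; continuity of $V$ at the minimizer $q^*$ furnishes some $r>0$ with $V(q)<v^*+\delta$ for $|q-q^*|<r$, whence
\[
\int_{\mbb R}e^{-2\nu V(q)}\ud q \;\geq\; 2r\, e^{-2\nu(v^*+\delta)},
\]
so that $\liminf_{\nu\to\infty}L_\nu\geq -2(v^*+\delta)$. Sending $\delta\to 0$ gives $\liminf L_\nu\geq -2v^*$.

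For the upper estimate I would follow the tail-estimate strategy advertised in the introduction and split the integral at $|q|=L$ for some $L\geq L_0$:
\[
\int_{\mbb R}e^{-2\nu V(q)}\ud q \;=\; \int_{|q|\leq L}e^{-2\nu V(q)}\ud q + \int_{|q|>L}e^{-2\nu V(q)}\ud q.
\]
The compact piece is trivially bounded by $2L\,e^{-2\nu v^*}$. For the tail, \textbf{Assumption 1} gives $V(q)\geq \eta|q|^\alpha$ on $\{|q|\geq L\}$; using the decomposition $2\nu\eta|q|^\alpha \geq (2\nu-1)\eta L^\alpha + \eta|q|^\alpha$ (valid whenever $|q|\geq L$ and $2\nu\geq 1$) yields the exponential-type tail bound
\[
\int_{|q|>L}e^{-2\nu V(q)}\ud q \;\leq\; e^{-(2\nu-1)\eta L^\alpha}\int_{\mbb R}e^{-\eta|q|^\alpha}\ud q,
\]
the last factor being finite because $\eta,\alpha>0$. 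Applying Proposition \ref{limsup} to the two-term decomposition yields $\limsup_{\nu\to\infty}L_\nu\leq\max\{-2v^*,\,-2\eta L^\alpha\}$; choosing $L$ large enough that $\eta L^\alpha>v^*$ collapses the right-hand side to $-2v^*$, which together with the matching liminf completes the proof.

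The one place that requires a real idea is precisely this tail control: the naive pointwise bound $V(q)\geq v^*$ is useless because it only gives $\int_{\mbb R}e^{-2\nu V}\ud q\leq \infty\cdot e^{-2\nu v^*}$. The polynomial lower bound in \textbf{Assumption 1} is what allows the mass outside a large ball to be absorbed into a factor that is exponentially small in $\nu$ relative to $e^{-2\nu v^*}$, thereby isolating the Laplace-type contribution from the global minimum.
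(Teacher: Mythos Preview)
Your proof is correct and follows the same overall Laplace-method skeleton as the paper: strip the prefactor, get the lower bound from continuity at a minimizer, and get the upper bound by splitting into a compact piece plus a tail controlled via \textbf{Assumption 1}. The one genuine difference is your tail estimate. The paper (its Lemma~3.5) treats $\int_{|q|\geq L}e^{-\nu f(q)}\ud q$ by first reducing to the Gaussian case $\alpha=2$ via the substitution $x=q^{\alpha/2}$ together with the crude bound $x^{2/\alpha-1}\leq C_\alpha e^{x^2}$, and then handling the Gaussian tail by squaring and passing to polar coordinates. Your argument is more direct: the elementary inequality $(2\nu-1)\eta|q|^\alpha\geq(2\nu-1)\eta L^\alpha$ on $\{|q|\geq L\}$ immediately peels off the factor $e^{-(2\nu-1)\eta L^\alpha}$ and leaves the $\nu$-independent finite integral $\int_{\mbb R}e^{-\eta|q|^\alpha}\ud q$. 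This buys you a shorter, case-free proof of the tail bound; the paper's route, by contrast, yields the slightly sharper constant $-\eta L^\alpha/2$ in place of your $-2\eta L^\alpha$ (after matching normalizations), but that refinement is irrelevant here since any exponential decay in $L$ suffices to make the tail negligible against $-2v^*$.
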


\begin{theo}\label{tho3.2}
Under \textbf{Assumption 1}, for any $\epsilon>0$, $\left\{\mu_{\nu,\epsilon}\right\}_{\nu>0}$ satisfies an LDP on $\mbb R^{2}$ with the good rate function $(I+Z_0)/\epsilon$, i.e.,
\begin{align}
	\liminf_{\nu\to\infty}\frac{1}{\nu}\ln \mu_{\nu,\epsilon}(U)\geq& -\inf_{(p,q)\in U}\frac{1}{\epsilon} \left(I(p,q)+Z_0\right)\qquad \text{for every open subset}~  U\subseteq\mbb R^2, \label{lowLDP}\\
	\limsup_{\nu\to\infty}\frac{1}{\nu}\ln \mu_{\nu,\epsilon}(C)\leq& -\inf_{(p,q)\in C} \frac{1}{\epsilon} \left(I(p,q)+Z_0\right)\qquad \text{for every closed subset}~  C\subseteq\mbb R^2. \label{upLDP}
\end{align}	
\end{theo}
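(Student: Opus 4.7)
The plan is to follow the strategy announced in the introduction: establish a weak LDP together with exponential tightness and then invoke Proposition~\ref{weakfull} to conclude. The cornerstone is the partition-function asymptotics
$$
\lim_{\nu\to\infty}\frac{1}{\nu}\ln Z_{\nu,\epsilon}=\frac{Z_0}{\epsilon},
$$
which I would extract from the factorization $Z_{\nu,\epsilon}=\sqrt{\pi\epsilon/\nu}\int_{\mbb R}e^{-(2\nu/\epsilon)V(q)}\,\ud q$: the prefactor contributes $0$ after $\frac{1}{\nu}\ln$, while the substitution $\widetilde\nu=\nu/\epsilon$ reduces the $q$-integral to the setting of Lemma~\ref{lem3.1}.

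For the weak-LDP lower bound \eqref{lowLDP}, given an open $U\subseteq\mbb R^2$ and $\delta>0$, I would pick $(p_0,q_0)\in U$ with $I(p_0,q_0)\le\inf_U I+\delta$ and a closed ball $\overline{B_r(p_0,q_0)}\subset U$, and bound
$$
\mu_{\nu,\epsilon}(U)\ge \frac{|B_r|}{Z_{\nu,\epsilon}}\exp\!\left\{-\frac{\nu}{\epsilon}\sup_{B_r}I\right\}.
$$
Taking $\liminf\frac{1}{\nu}\ln$, applying the $Z_{\nu,\epsilon}$-limit, and letting successively $r\to 0$ (by continuity of $I$) and $\delta\to 0$ yields \eqref{lowLDP}. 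For the upper bound on a closed set $C$ I would use a Laplace-type splitting: for any $\alpha\in(0,1)$,
$$
\mu_{\nu,\epsilon}(C)\le \frac{e^{-((1-\alpha)\nu/\epsilon)\inf_C I}}{Z_{\nu,\epsilon}}\int_{\mbb R^2}e^{-(\alpha\nu/\epsilon)I(p,q)}\,\ud p\,\ud q=\frac{Z_{\alpha\nu,\epsilon}}{Z_{\nu,\epsilon}}\,e^{-((1-\alpha)\nu/\epsilon)\inf_C I}.
$$
Taking $\limsup\frac{1}{\nu}\ln$ and invoking the partition-function limit for both factors gives $\limsup_{\nu\to\infty}\frac{1}{\nu}\ln\mu_{\nu,\epsilon}(C)\le -\frac{1-\alpha}{\epsilon}(\inf_C I+Z_0)$; sending $\alpha\to 0$ produces \eqref{upLDP}, and restricting to compact $C$ already yields the weak LDP.

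Exponential tightness is where \textbf{Assumption 1} genuinely enters. With $K_L=[-L,L]^2$ I would split $\mu_{\nu,\epsilon}(K_L^c)\le \mu_{\nu,\epsilon}(|p|>L)+\mu_{\nu,\epsilon}(|q|>L)$. The $p$-marginal is an explicit Gaussian and gives decay of rate $L^2/\epsilon$. The $q$-marginal is the delicate piece: another Laplace splitting $V=\alpha V+(1-\alpha)V$ together with $V(q)\ge\eta L^\alpha$ on $\{|q|\ge L\}$ for $L\ge L_0$ produces
$$
\int_{|q|>L}e^{-(2\nu/\epsilon)V(q)}\,\ud q\le e^{-(2\nu\eta\alpha/\epsilon)L^\alpha}\int_{\mbb R}e^{-(2\nu(1-\alpha)/\epsilon)V(q)}\,\ud q,
$$
and dividing by $\int_{\mbb R}e^{-(2\nu/\epsilon)V(q)}\,\ud q$ before applying Lemma~\ref{lem3.1} to both integrals makes $\frac{1}{\nu}\ln\mu_{\nu,\epsilon}(|q|>L)$ tend to $-\infty$ as $L\to\infty$. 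Combining the weak LDP with this exponential tightness, Proposition~\ref{weakfull} delivers the full LDP \eqref{lowLDP}--\eqref{upLDP} with good rate function $(I+Z_0)/\epsilon$. The step I expect to be the main obstacle is precisely this $q$-tail estimate: unlike the explicit $p$-Gaussian, it hinges on the polynomial lower bound from Assumption~1 and demands that the splitting parameter $\alpha$ be balanced against the Lemma~\ref{lem3.1} asymptotics so that the growth of $V$ is converted into a decay rate that diverges with $L$.
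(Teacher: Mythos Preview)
Your proposal is correct, but it takes a different route from the paper in three respects.

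First, the paper reduces immediately to $\epsilon=1$ via the identity $\mu_{\nu,\epsilon}=\mu_{\nu/\epsilon,1}$, proves the full LDP for $\{\mu_{\nu,1}\}_{\nu>0}$, and only then scales back; you carry a general $\epsilon$ throughout. Second, for the upper bound the paper works only on \emph{compact} sets and uses the trivial estimate $\int_C e^{-\nu I}\le |C|\,e^{-\nu\inf_C I}$ (finite $|C|$ being the whole point of compactness), whereas your Laplace splitting $e^{-\nu I/\epsilon}=e^{-(1-\alpha)\nu I/\epsilon}e^{-\alpha\nu I/\epsilon}$ together with the identification $\int_{\mbb R^2}e^{-\alpha\nu I/\epsilon}=Z_{\alpha\nu,\epsilon}$ handles all closed sets at once. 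Third, for exponential tightness of the $q$-tail the paper invokes Lemma~\ref{lem3.5} directly (an explicit estimate on $\int_{|q|\ge L}e^{-\nu f}$), while you reuse the same splitting trick and reduce everything to the partition-function limit of Lemma~\ref{lem3.1}. Your approach is more unified---one device covers both the upper bound and the tightness---and yields the full upper bound without passing through the weak LDP, so your subsequent appeal to Proposition~\ref{weakfull} is needed only for goodness of the rate function (which, incidentally, is also immediate from Assumption~1 since the level sets of $I$ are closed and bounded). The paper's approach is a bit more elementary on compacts and gives sharper explicit tail constants via Lemma~\ref{lem3.5}.

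One cosmetic point: you use the symbol $\alpha$ both for the splitting parameter and for the growth exponent from \textbf{Assumption~1}; renaming one of them would avoid confusion.
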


\begin{theo}\label{tho3.3}
Under \textbf{Assumption 1}, for any $\nu>0$,	$\left\{\mu_{\nu,\epsilon}\right\}_{\epsilon>0}$ satisfies an LDP on $\mbb R^{2}$ with the good rate function $\nu(I+Z_0)$, i.e.,
	\begin{align*}
	\liminf_{\epsilon\to0}\epsilon\ln \mu_{\nu,\epsilon}(U)\geq& -\inf_{(p,q)\in U} \nu(I(p,q)+Z_0)\qquad \text{for every open subset}~  U\subseteq\mbb R^2,\nonumber\\
	\limsup_{\epsilon\to0}\epsilon\ln \mu_{\nu,\epsilon}(C)\leq& -\inf_{(p,q)\in C} \nu(I(p,q)+Z_0)\qquad \text{for every closed subset}~  C\subseteq\mbb R^2.
	\end{align*}	
\end{theo}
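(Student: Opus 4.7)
The plan is to reduce Theorem \ref{tho3.3} to Theorem \ref{tho3.2} via a scaling argument, exploiting the fact that the density of $\mu_{\nu,\epsilon}$ depends on $(\nu,\epsilon)$ only through the ratio $r:=\nu/\epsilon$. A direct inspection of the formulas gives
\begin{align*}
Z_{\nu,\epsilon}=\sqrt{\pi\epsilon/\nu}\int_{\mbb R}e^{-2(\nu/\epsilon)V(q)}\ud q=\sqrt{\pi/r}\int_{\mbb R}e^{-2rV(q)}\ud q=Z_{r,1},
\end{align*}
and the density $\frac{1}{Z_{\nu,\epsilon}}\exp\{-\frac{2\nu}{\epsilon}(\frac12p^2+V(q))\}$ becomes $\frac{1}{Z_{r,1}}\exp\{-2r(\frac12p^2+V(q))\}$. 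Hence $\mu_{\nu,\epsilon}=\mu_{r,1}$ as Borel probability measures on $\mbb R^2$. This is the only structural observation needed; everything else is bookkeeping.

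Given this identification, I would fix $\nu>0$ and, for any Borel set $A\subseteq\mbb R^2$, write
\begin{align*}
\epsilon\ln\mu_{\nu,\epsilon}(A)=\epsilon\ln\mu_{\nu/\epsilon,1}(A)=\nu\cdot\frac{1}{r}\ln\mu_{r,1}(A),\qquad r=\nu/\epsilon.
\end{align*}
Since $\epsilon\to 0$ is equivalent to $r\to\infty$, the lower bound for open $U\subseteq\mbb R^2$ follows by multiplying the lower bound in Theorem \ref{tho3.2} (applied with $\epsilon=1$, whose rate function is $I+Z_0$) by $\nu$:
\begin{align*}
\liminf_{\epsilon\to 0}\epsilon\ln\mu_{\nu,\epsilon}(U)=\nu\liminf_{r\to\infty}\frac{1}{r}\ln\mu_{r,1}(U)\geq -\nu\inf_{(p,q)\in U}(I(p,q)+Z_0).
\end{align*}
The upper bound for closed $C\subseteq\mbb R^2$ is obtained identically from the upper bound in Theorem \ref{tho3.2}.

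It remains to check that $\nu(I+Z_0)$ is a good rate function. Lower semi-continuity is inherited from the continuity of $I$, and $Z_0$ merely shifts the level. Goodness (compactness of sublevel sets) follows from \textbf{Assumption 1}: the bound $V(q)\geq\eta|q|^{\alpha}$ for $|q|\geq L_0$ forces $I(p,q)=p^2+2V(q)\to+\infty$ as $\|(p,q)\|\to\infty$, so each set $\{(p,q)\in\mbb R^2:\nu(I(p,q)+Z_0)\leq a\}$ is closed and bounded, hence compact. No additional obstacle is anticipated; the scaling identity $\mu_{\nu,\epsilon}=\mu_{\nu/\epsilon,1}$ is the whole content of the argument, and all the analytical work (in particular the existence of $\lim_{\nu\to\infty}\nu^{-1}\ln Z_{\nu,1}=Z_0$ from Lemma \ref{lem3.1}, and the exponential tightness used in Theorem \ref{tho3.2}) has already been carried out.
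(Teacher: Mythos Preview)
Your proposal is correct and follows essentially the same route as the paper: both use the scaling identity $\mu_{\nu,\epsilon}=\mu_{\nu/\epsilon,1}$, rewrite $\epsilon\ln\mu_{\nu,\epsilon}(F)=\nu\cdot\frac{1}{t}\ln\mu_{t,1}(F)$ with $t=\nu/\epsilon\to\infty$, and invoke Theorem~\ref{tho3.2} (case $\epsilon=1$). Your extra check that $\nu(I+Z_0)$ is a good rate function is fine but not strictly needed, since goodness of $I+Z_0$ was already established in Theorem~\ref{tho3.2} via Proposition~\ref{weakfull}, and multiplication by the positive constant $\nu$ preserves compactness of sublevel sets.
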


\subsection{Proof of Lemma \ref{lem3.1}}
In this part, we aim to prove Lemma \ref{lem3.1} by means of some useful lemmas. First of all, we note the following fact.
\begin{lem}\label{lem3.4}
	If $f:\mbb R\to\mbb R$ is upper semi-continuous, then for any $a,b\in\mbb R$ with $a<b$,
	\begin{align*}
		\lim_{\nu\to\infty}\frac{1}{\nu}\ln\int_{a}^{b}e^{-\nu f(q)}\ud q=-\inf_{q\in[a,b]}f(q).
	\end{align*}
\end{lem}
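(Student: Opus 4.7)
The plan is to establish matching upper and lower bounds for
\[
\frac{1}{\nu}\ln\int_a^b e^{-\nu f(q)}\,\ud q
\]
that both converge to $-m$, where $m:=\inf_{q\in[a,b]} f(q)$. This is a one-dimensional Laplace-type asymptotic, and I would handle the two inequalities separately, with only the lower bound genuinely using the upper semi-continuity of $f$.

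For the upper bound I would argue directly from $f(q)\ge m$ on $[a,b]$: the integrand is bounded by $e^{-\nu m}$, so $\int_a^b e^{-\nu f(q)}\,\ud q\le (b-a)e^{-\nu m}$, and taking $\frac{1}{\nu}\ln(\cdot)$ followed by $\nu\to\infty$ yields
\[
\limsup_{\nu\to\infty}\frac{1}{\nu}\ln\int_a^b e^{-\nu f(q)}\,\ud q\le -m,
\]
without any regularity assumption on $f$.

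For the lower bound, I would fix $\delta>0$ and choose $q_0\in[a,b]$ with $f(q_0)<m+\delta/2$ (available by definition of infimum; if $m=-\infty$ I would instead replace $m+\delta/2$ throughout by an arbitrarily prescribed $-M$ and conclude $\lim=+\infty$). The key step invokes upper semi-continuity at $q_0$: it produces an open neighborhood $U$ of $q_0$ in $\mbb R$ on which $f(q)<f(q_0)+\delta/2<m+\delta$. The set $U\cap[a,b]$ has positive Lebesgue measure $\lambda>0$, including in the boundary cases $q_0\in\{a,b\}$ since $U$ still contains an open interval about $q_0$, so
\[
\int_a^b e^{-\nu f(q)}\,\ud q\ge \int_{U\cap[a,b]}e^{-\nu f(q)}\,\ud q\ge \lambda\,e^{-\nu(m+\delta)}.
\]
Taking $\frac{1}{\nu}\ln(\cdot)$ and $\liminf_{\nu\to\infty}$ then yields $-(m+\delta)$, and letting $\delta\to 0$ completes the bound.

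I do not anticipate a serious obstacle; the only delicate point is correctly exploiting upper semi-continuity to show that the sublevel set $\{f<m+\delta\}$ contains an open neighborhood (hence a set of positive Lebesgue measure) around the near-minimizer $q_0$. Lower semi-continuity would not suffice for this direction, which is precisely why the hypothesis is stated as upper semi-continuity. The boundary cases $q_0\in\{a,b\}$ and the unbounded-below case $m=-\infty$ both slot into the same framework with only cosmetic modifications.
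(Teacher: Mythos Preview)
Your proposal is correct and follows essentially the same approach as the paper's proof: both obtain the upper bound trivially from $f\ge m$ on $[a,b]$, and both obtain the lower bound by using upper semi-continuity at a point $q_0$ to find a neighborhood on which $f<f(q_0)+\varepsilon$, restricting the integral to that neighborhood intersected with $[a,b]$, and then letting the auxiliary parameter tend to zero. The only cosmetic difference is that the paper fixes an arbitrary $q_0\in[a,b]$ and takes a supremum over $q_0$ at the end, whereas you select $q_0$ to be a near-minimizer from the outset; your explicit remarks on the boundary case $q_0\in\{a,b\}$ and the case $m=-\infty$ are welcome but do not change the underlying argument.
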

\begin{proof}
	Let $q_0\in[a,b]$ be fixed. Since $f$ is upper semi-continuous, for any $\varepsilon>0$, there is some $\delta>0$ such that
	\begin{align*}
		f(q)<f(q_0)+\varepsilon, \qquad\forall\quad q\in B(q_0,\delta). 
	\end{align*}
Thus, we have
\begin{align*}
	\liminf_{\nu\to\infty}\frac{1}{\nu}\ln\int_{a}^{b}e^{-\nu f(q)}\ud q\geq& \liminf_{\nu\to\infty}\frac{1}{\nu}\ln\int_{B(q_0,\delta)\cap [a,b]}e^{-\nu f(q)}\ud q \nonumber\\
	\geq & \liminf_{\nu\to\infty}\frac{1}{\nu}\ln\left(e^{-\nu (f(q_0)+\varepsilon)}\big|B(q_0,\delta)\cap [a,b]\big|\right)\nonumber \\
	=&-f(q_0)+\varepsilon,
\end{align*}
where $|B(q_0,\delta)\cap [a,b]|$ denotes the Lebesgue measure of $B(q_0,\delta)\cap [a,b]$.
Taking $\varepsilon\to 0$ in the above formula and using the arbitrariness of $q_0$ yield
\begin{align}\label{sec3k1}
\liminf_{\nu\to\infty}\frac{1}{\nu}\ln\int_{a}^{b}e^{-\nu f(q)}\ud q\geq \sup_{q\in[a,b]}-f(q)=-\inf_{q\in[a,b]}f(q).
\end{align}
Further, we have 
\begin{align}\label{sec3k2}
	\limsup_{\nu\to\infty}\frac{1}{\nu}\ln\int_{a}^{b}e^{-\nu f(q)}\ud q\leq \limsup_{\nu\to\infty}\frac{1}{\nu}\ln\left(e^{-\nu\inf_{q\in[a,b]}f(q)}(b-a)\right)=-\inf_{q\in[a,b]}f(q).
\end{align}
Combining \eqref{sec3k1} and \eqref{sec3k2}, we complete the proof.
\end{proof}

\begin{lem}\label{lem3.5}
	Let $f:\mbb R\to\mbb R$ be a measurable function. Assume that there exist $\eta,\alpha>0$ and $L_0\geq1$ such that
	\begin{align}\label{sec3k3}
		f(q)\geq\eta |q|^{\alpha},\qquad\forall\quad |q|\geq L_0.
	\end{align}
	Then for each $L\geq L_0$,
\begin{align}\label{sec3k4}
	\limsup_{\nu\to\infty}\frac{1}{\nu}\ln\int_{|q|\geq L}e^{-\nu f(q)}\ud q\leq -\frac{\eta L^{\alpha
	}}{2}.
\end{align}
\end{lem}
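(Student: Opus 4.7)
The plan is to exploit the inequality $f(q)\geq \eta|q|^\alpha$ on the full range $|q|\geq L\geq L_0$, but to split the exponent symmetrically so that half of it produces the desired factor $e^{-\nu\eta L^\alpha/2}$ uniformly while the other half guarantees integrability. Concretely, for $|q|\geq L$, I would write
\begin{equation*}
\nu f(q)=\tfrac{\nu}{2}f(q)+\tfrac{\nu}{2}f(q)\geq\tfrac{\nu}{2}\eta L^\alpha+\tfrac{\nu}{2}\eta|q|^\alpha,
\end{equation*}
using the hypothesis \eqref{sec3k3} twice (note that both halves are valid since the whole region of integration lies in $\{|q|\geq L_0\}$). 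This yields
\begin{equation*}
\int_{|q|\geq L}e^{-\nu f(q)}\,\ud q\leq e^{-\nu\eta L^\alpha/2}\int_{\mbb R}e^{-\nu\eta|q|^\alpha/2}\,\ud q.
\end{equation*}

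Next I would show that the remaining Gaussian-type integral grows only polynomially (in fact decays) in $\nu$. A standard rescaling $u=(\nu\eta/2)^{1/\alpha}q$ gives
\begin{equation*}
\int_{\mbb R}e^{-\nu\eta|q|^\alpha/2}\,\ud q=\left(\tfrac{\nu\eta}{2}\right)^{-1/\alpha}\int_{\mbb R}e^{-|u|^\alpha}\,\ud u=C_\alpha\,\nu^{-1/\alpha},
\end{equation*}
where $C_\alpha\in(0,\infty)$ depends only on $\eta$ and $\alpha$. Therefore $\tfrac{1}{\nu}\ln\bigl(C_\alpha\nu^{-1/\alpha}\bigr)\to 0$ as $\nu\to\infty$.

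Combining the two steps and taking $\tfrac{1}{\nu}\ln$ of both sides of the estimate, I obtain
\begin{equation*}
\limsup_{\nu\to\infty}\frac{1}{\nu}\ln\int_{|q|\geq L}e^{-\nu f(q)}\,\ud q\leq -\frac{\eta L^\alpha}{2}+\limsup_{\nu\to\infty}\frac{1}{\nu}\ln\bigl(C_\alpha\nu^{-1/\alpha}\bigr)=-\frac{\eta L^\alpha}{2},
\end{equation*}
which is exactly \eqref{sec3k4}. The argument is essentially a Laplace-type tail bound, and there is no real obstacle: the only subtlety is making sure the splitting factor $1/2$ (or any constant in $(0,1)$) is used so that the residual integral $\int_{\mbb R}e^{-c\nu|q|^\alpha}\,\ud q$ remains integrable and contributes $o(\nu)$ on the logarithmic scale. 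Choosing any fraction $\theta\in(0,1)$ in place of $1/2$ would give the sharper bound $-\theta\eta L^\alpha$, but the statement only requires $\theta=1/2$, so the symmetric split is the cleanest presentation.
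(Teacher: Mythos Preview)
Your proof is correct and genuinely simpler than the paper's. The paper proceeds in two steps: first it treats the Gaussian case $\alpha=2$ by squaring the tail integral, passing to a double integral in $\mbb R^2$, and evaluating it in polar coordinates to obtain $S_{L,\nu}^2\leq \frac{\pi}{\nu\eta}e^{-\nu\eta L^2}$; then for $\alpha\neq 2$ it substitutes $x=q^{\alpha/2}$, invokes the auxiliary bound $x^{2/\alpha-1}\leq C_\alpha e^{x^2}$ for $x\geq 1$, and reduces back to the quadratic case. Your approach bypasses both the polar-coordinate trick and the case split: the symmetric decomposition $\nu f(q)\geq \tfrac{\nu}{2}\eta L^\alpha+\tfrac{\nu}{2}\eta|q|^\alpha$ plus the one-line scaling $\int_{\mbb R}e^{-c\nu|q|^\alpha}\ud q=K_\alpha\nu^{-1/\alpha}$ handles all $\alpha>0$ uniformly. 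What your argument buys is brevity and transparency; what the paper's approach buys is perhaps a more self-contained computation of the Gaussian tail (no appeal to the finiteness of $\int_{\mbb R}e^{-|u|^\alpha}\ud u$), though that is a trivial fact. Your closing remark that any $\theta\in(0,1)$ in place of $\tfrac{1}{2}$ yields the sharper bound $-\theta\eta L^\alpha$ is also correct and worth noting, since it shows the constant $\tfrac{1}{2}$ in the lemma is not optimal.
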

\begin{proof}
	\textit{Step 1: We prove the conclusion for $\alpha=2$}.\\
	In this case, \eqref{sec3k3} becomes
	\begin{align*}
	f(q)\geq\eta |q|^{2},\qquad\forall\quad |q|\geq L_0.
	\end{align*}
Denote $S_{L,\nu}:=\int_{|q|\geq L}e^{-\nu f(q)}\ud q$ for each $L\geq L_0$.
Then $S_{L,\nu}\leq \int_{|q|\geq L}e^{-\nu\eta q^2}\ud q$, which leads to
\begin{align*}
	S_{L,\nu}^2\leq & \int_{|q_1|\geq L\,,|q_2|\geq L}  e^{-\nu\eta(q_1^2+q_2^2)}\ud q_1\ud q_2 
	\leq \int_{q_1^2+q_2^2\geq L^2}e^{-\nu\eta(q_1^2+q_2^2)}\ud q_1\ud q_2\\
	= &\int_{0}^{2\pi}\int_{L}^{+\infty}e^{-\nu\eta r^2}r\ud r\ud \theta 
	=\pi \int_{L}^{+\infty}e^{-\nu\eta r^2}\ud r^2 \\
	=&\frac{\pi}{\nu\eta}e^{-\nu\eta L^2}.
\end{align*}
As a consequence, we obtain
\begin{align}\label{sec3k5}
	\limsup_{\nu\to\infty}\frac{1}{\nu}\ln S_{L,\nu}=\limsup_{\nu\to\infty}\frac{1}{2\nu}\ln S_{L,\nu}^2\leq\limsup_{\nu\to\infty} \frac{1}{2\nu}\ln\left(\frac{\pi}{\nu\eta}e^{-\nu\eta L^2}\right)=-\frac{\eta L^2}{2}.
	\end{align}
	This proves \eqref{sec3k4} for $\alpha=2$.
	
\textit{Step 2: We prove the conclusion for $\alpha\neq 2$.}
Using \eqref{sec3k3} and the variable substitution $x=q^{\frac{\alpha}{2}}$, we have that for each $L\geq L_0$,
\begin{align}\label{sec3k6}
	S_{L,\nu}\leq \int_{|q|\geq L}e^{-\nu\eta |q|^{\alpha}}\ud q=2 \int_{q\geq L}e^{-\nu\eta q^{\alpha}}\ud q=\frac{4}{\alpha}\int_{x\geq L^{\frac{\alpha}{2}}}e^{-\nu\eta x^{2}}x^{\frac{2}{\alpha}-1}\ud x.
\end{align}
Since for each $\alpha>0$, $\lim_{x\rightarrow\infty}x^{\frac{2}{\alpha}-1}e^{-x^2}=0$, we have that $x^{\frac{2}{\alpha}-1}e^{-x^2}$ is bounded on $[1,+\infty)$. This implies that for each $\alpha>0$, there is some constant $C_{\alpha}>0$ such that 
\begin{align}\label{sec3claim}
	x^{\frac{2}{\alpha}-1}\leq C_{\alpha}e^{x^2},\qquad\forall\quad x\geq 1.
\end{align}
Substituting 
\eqref{sec3claim} into \eqref{sec3k6} gives
\begin{align*}
	S_{L,\nu}\leq \frac{4}{\alpha}C_{\alpha}\int_{x\geq L^{\frac{\alpha}{2}}}e^{-\nu\eta x^2}e^{x^2}\ud x=\frac{2}{\alpha}C_{\alpha}\int_{|x|\geq L^{\frac{\alpha}{2}}}e^{-(\nu\eta-1) x^2}\ud x.
\end{align*}
It follows from the above formula and \eqref{sec3k5} that
\begin{align*}
	\limsup_{\nu\to\infty}\frac{1}{\nu}\ln S_{L,\nu}\leq&\limsup_{\nu\to\infty}\frac{1}{\nu}\ln\int_{|x|\geq L^{\frac{\alpha}{2}}}e^{-(\nu\eta-1) x^2}\ud x\\
	=&\limsup_{\nu\to\infty}\frac{\nu-\frac{1}{\eta}}{\nu}\frac{1}{\nu-\frac{1}{\eta}}\ln \int_{|x|\geq L^{\frac{\alpha}{2}}}e^{-\eta(\nu-\frac{1}{\eta}) x^2}\ud x\\
	=&\lim_{t\to\infty}\frac{1}{t}\ln\int_{|x|\geq L^{\frac{\alpha}{2}}}e^{-t\eta x^2}\ud x\\
	\leq&-\frac{\eta L^{\alpha}}{2}.
\end{align*}
Combining the above discussions, we complete the proof.
\end{proof}
\begin{lem}\label{lem3.6}
	Let $f:\mbb R\to\mbb R$ be an upper semi-continuous function. If there exist $\eta,\alpha>0$ and $L_0\geq 1$ such that
	\begin{align*}
	f(q)\geq\eta |q|^{\alpha},\qquad\forall\quad |q|\geq L_0,
	\end{align*}
	then 
	\begin{align*}
	\lim_{\nu\to\infty}\frac{1}{\nu}\ln \int_{\mbb R}e^{-\nu f(q)}\ud q=-\inf_{q\in\mbb R} f(q).
	\end{align*}
\end{lem}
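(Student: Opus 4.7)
The plan is to combine Lemmas \ref{lem3.4} and \ref{lem3.5} via a splitting of the integral at radius $L\geq L_0$:
\[
\int_{\mbb R}e^{-\nu f(q)}\,\ud q=\int_{-L}^{L}e^{-\nu f(q)}\,\ud q+\int_{|q|\geq L}e^{-\nu f(q)}\,\ud q.
\]
The first piece is controlled by Lemma \ref{lem3.4} while the second is exponentially small by Lemma \ref{lem3.5}. A key preliminary observation is that the tail condition $f(q)\geq \eta|q|^{\alpha}$ for $|q|\geq L_0$ forces the global infimum of $f$ to be captured on a bounded interval: for any $L\geq L_0$ with $\eta L^{\alpha}\geq -\inf_{q\in[-L_0,L_0]} f(q)$, one has $\inf_{q\in[-L,L]} f(q)=\inf_{q\in\mbb R} f(q)$.

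For the lower bound I would discard the tail and apply Lemma \ref{lem3.4}, yielding
\[
\liminf_{\nu\to\infty}\frac{1}{\nu}\ln\int_{\mbb R}e^{-\nu f(q)}\,\ud q\geq \liminf_{\nu\to\infty}\frac{1}{\nu}\ln\int_{-L}^{L}e^{-\nu f(q)}\,\ud q=-\inf_{q\in[-L,L]} f(q)
\]
for every $L\geq L_0$; choosing $L$ sufficiently large according to the observation above then gives $-\inf_{q\in\mbb R} f(q)$ on the right-hand side.

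For the upper bound I would apply Proposition \ref{limsup} to the splitting together with Lemmas \ref{lem3.4} and \ref{lem3.5}, obtaining
\[
\limsup_{\nu\to\infty}\frac{1}{\nu}\ln\int_{\mbb R}e^{-\nu f(q)}\,\ud q \leq \max\left\{-\inf_{q\in[-L,L]} f(q),\,-\frac{\eta L^{\alpha}}{2}\right\}.
\]
Once $L$ is chosen large enough that $\tfrac{\eta L^{\alpha}}{2}\geq -\inf_{q\in\mbb R} f(q)$ and $\inf_{q\in[-L,L]} f(q)=\inf_{q\in\mbb R} f(q)$ hold simultaneously, the right-hand side collapses to $-\inf_{q\in\mbb R} f(q)$, matching the lower bound and completing the proof.

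The argument is essentially bookkeeping atop the two preceding lemmas; the only mildly delicate point is the localization step justifying the replacement of $\inf_{q\in[-L,L]} f(q)$ by $\inf_{q\in\mbb R} f(q)$. Since upper semi-continuity alone provides no lower bound on $f$, the main obstacle is invoking the polynomial coercivity to rule out an infimizing sequence escaping to infinity, but this is immediate from the assumed tail estimate and from the fact that an upper semi-continuous function attains its supremum, and hence is bounded above, on any compact interval (which keeps $-\inf_{[-L_0,L_0]}f$ finite whenever $f$ is real-valued there).
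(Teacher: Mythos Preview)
Your approach is exactly that of the paper: split the integral at radius $L$, control the bounded piece by Lemma~\ref{lem3.4} and the tail by Lemma~\ref{lem3.5}, combine via Proposition~\ref{limsup}, and localize the infimum by comparing $\eta L^{\alpha}$ to an interior value of $f$ (the paper simply uses $f(0)$). Two minor slips to fix: the thresholds ``$\eta L^{\alpha}\geq -\inf_{[-L_0,L_0]}f$'' and ``$\tfrac{\eta L^{\alpha}}{2}\geq -\inf_{\mbb R}f$'' should drop the minus signs, and your closing parenthetical is off---boundedness \emph{above} on a compact interval does not force $\inf_{[-L_0,L_0]}f$ to be finite, but this is irrelevant since comparing to the single real number $f(0)$ (rather than to the possibly $-\infty$ quantity $\inf_{[-L_0,L_0]}f$) already yields the localization.
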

\begin{proof}
	First, it follows from Proposition \ref{limsup}, Lemmas \ref{lem3.4} and \ref{lem3.5} that for any $L\geq L_0$,
	\begin{align*}
	\limsup_{\nu\to\infty}\frac{1}{\nu}\ln \int_{\mbb R}e^{-\nu f(q)}\ud q=&\max\left\{\limsup_{\nu\to\infty}\frac{1}{\nu}\ln \int_{|q|\leq L}e^{-\nu f(q)}\ud q,\limsup_{\nu\to\infty}\frac{1}{\nu}\ln \int_{|q|\geq L}e^{-\nu f(q)}\ud q\right\}\\
	\leq&\max\left\{-\inf_{|q|\leq L}f(q),-\frac{\eta L^{\alpha}}{2}\right\}.
	\end{align*}
	Notice that $\frac{\eta L^{\alpha}}{2}\geq f(0)$ holds true for any $L\geq L_1:=\max\{L_0,(2|f(0)|/\eta)^{1/\alpha}\}$. Let $L\ge L_1$ be arbitrarily fixed. 
	Next, notice that $-\frac{\eta L^{\alpha}}{2}\leq -f(0)\leq -\inf_{|q|\leq L}f(q)$, which indicates that
	\begin{align}\label{sec3k7}
	\limsup_{\nu\to\infty}\frac{1}{\nu}\ln \int_{\mbb R}e^{-\nu f(q)}\ud q\leq -\inf_{|q|\leq L}f(q).
	\end{align}
	On the other hand, by Lemma \ref{lem3.4}, 
	\begin{align}\label{sec3k8}
	\liminf_{\nu\to\infty}\frac{1}{\nu}\ln \int_{\mbb R}e^{-\nu f(q)}\ud q\geq& \liminf_{\nu\to\infty}\frac{1}{\nu}\ln \int_{|q|\leq L}e^{-\nu f(q)}\ud q=-\inf_{|q|\leq L}f(q).
	\end{align}
	According to the assumptions on $f$, 
	$\inf_{|q|\geq L}f(q)\geq\inf_{|q|\geq L}\eta|q|^{\alpha}\geq \eta L^{\alpha}\ge f(0)\ge \inf_{|q|\leq L}f(q),$ 
	from which it follows that
	$$\inf_{q\in\mbb R}f(q)=\min\left\{\inf_{|q|\leq L}f(q),\inf_{|q|\geq L}f(q)\right\}=\inf_{|q|\leq L}f(q).$$ 
	This together with \eqref{sec3k7} and \eqref{sec3k8} finally yields
	\begin{align*}
	\lim_{\nu\to\infty}\frac{1}{\nu}\ln\int_{\mbb R}e^{-\nu f(q)}\ud q=-\inf_{|q|\leq L}f(q)=-\inf_{q\in\mbb R}f(q).
	\end{align*}
	This completes the proof.
\end{proof}

We are now in the position to prove Lemma \ref{lem3.1} based on the above  lemma.

\vspace{2mm}
\noindent{\large\textbf{Proof of Lemma \ref{lem3.1}.}}
Recall $Z_{\nu,1}=\sqrt{\frac{\pi}{\nu}}\int_{\mbb R}e^{-2\nu V(q)}\ud q$. Under \textbf{Assumption 1}, it follows from Lemma \ref{lem3.6} that
\begin{align*}
\lim_{\nu\to\infty}\frac{1}{\nu}\ln Z_{\nu,1}=\lim_{\nu\to\infty}\frac{1}{\nu}\ln\int_{\mbb R}e^{-2\nu V(q)}\ud q=2\lim_{\nu\to\infty}\frac{1}{\nu}\ln\int_{\mbb R}e^{-\nu V(q)}\ud q=-2\inf_{q\in\mbb R}V(q).
\end{align*} 
Hence, we complete the proof. \hfill $\square$

\subsection{Proofs of Theorems \ref{tho3.2} and \ref{tho3.3}}
In this part, we give the proofs of Theorems \ref{tho3.2} and \ref{tho3.3}. 


\vspace{2mm}
\noindent{\large\textbf{Proof of Theorem \ref{tho3.2}.}}

\textit{Step 1: We show that  $\{\mu_{\nu,1}\}_{\nu>0}$ satisfies a weak LDP with the rate function $I+Z_0$.}

Clearly, under \textbf{Assumption 1}, $I+Z_0$ is a rate function.
Let  $U\subseteq \mbb R^2$ be a given non-empty open set. Since $U$ is open and $I$ is continuous, for any fixed $x_0\in U$ and $\varepsilon>0$, there exists some $\delta>0$ such that
\begin{align*}
B(x_0,\delta)\subseteq U\quad\text{and}\quad I(x)<I(x_0)+\varepsilon, \quad\forall\quad x\in B(x_0,\delta).
\end{align*}
By the above formula and Lemma \ref{lem3.1},
\begin{align*}
\liminf_{\nu\to\infty}\frac{1}{\nu}\ln\mu_{\nu,1}(U)\geq& \liminf_{\nu\to\infty}\frac{1}{\nu}\ln\mu_{\nu,1}(B(x_0,\delta))\nonumber\\
= &\liminf_{\nu\to\infty}\frac{1}{\nu}\ln\left(\frac{1}{Z_{\nu,1}}\int_{B(x_0,\delta)}e^{-\nu I(p,q)}\ud p\ud q\right)\nonumber\\
\geq &-Z_0+\liminf_{\nu\to\infty}\frac{1}{\nu}\ln\left(e^{-\nu(I(x_0)+\varepsilon)}\big|B(x_0,\delta)\big|\right)\nonumber\\
=&-Z_0-I(x_0)-\varepsilon.
\end{align*}
Letting $\varepsilon\to 0$ in the above formula and using the arbitrariness of $x_0$, we have
\begin{align}\label{sec3k9}
\liminf_{\nu\to\infty}\frac{1}{\nu}\ln\mu_{\nu,1}(U)\geq\sup_{x\in U}\left\{-I(x)-Z_0\right\}=-\inf_{(p,q)\in U}\left\{I(p,q)+Z_0\right\}.
\end{align}
In addition, notice that \eqref{sec3k9} holds naturally for $U=\emptyset$. These imply that 
\begin{align}\label{sec3k10}
\liminf_{\nu\to\infty}\frac{1}{\nu}\ln\mu_{\nu,1}(U)\geq-\inf_{(p,q)\in U}\left\{I(p,q)+Z_0\right\}\qquad\text{for every open set}~U\subseteq \mbb R^2.
\end{align}

For any compact set $C\subseteq \mbb R^2$, we have that $C$ is bounded and hence $|C|<+\infty$. If $|C|=0$, then we have $\mu_{\nu,1}(C)=0$. In this case, $\limsup_{\nu\to\infty}\frac{1}{\nu}\ln\mu_{\nu,1}(C)=-\infty\leq-\inf_{(p,q)\in C}(I(p,q)+Z_0)$. If $0<|C|<+\infty$, then by Lemma \ref{lem3.1},
\begin{align*}
\limsup_{\nu\to\infty}\frac{1}{\nu}\ln\mu_{\nu,1}(C)=  &\liminf_{\nu\to\infty}\frac{1}{\nu}\ln\left(\frac{1}{Z_{\nu,1}}\int_{C}e^{-\nu I(p,q)}\ud p\ud q\right) \\
\leq& -Z_0+\liminf_{\nu\to\infty}\frac{1}{\nu}\ln\left(e^{-\nu\inf_{x\in C}I(x)}|C|\right)\\
=&-Z_0-\inf_{x\in C}I(x)=-\inf_{x\in C}(I(x)+Z_0).
\end{align*}
Thus, it holds that
\begin{align}\label{sec3k11}
\limsup_{\nu\to\infty}\frac{1}{\nu}\ln\mu_{\nu,1}(C)\leq-\inf_{(p,q)\in C}\left\{I(p,q)+Z_0\right\}\qquad\text{for every compact set}~C\subseteq \mbb R^2.
\end{align}
According to Definition \ref{LDP}, we obtain that $\{\mu_{\nu,1}\}_{\nu>0}$ satisfies a weak LDP with the rate function $I+Z_0$.

\textit{Step 2: We show that  $\{\mu_{\nu,1}\}_{\nu>0}$ is exponentially tight.}

Denote $K_L:=[-L,L]^2$, $L>0$, which is a compact set of $\mbb R^2$.
For $L\geq L_0$, using the fact $K_L^c=\left([-L,L]^c\times\mbb R\right)\cup\left(\mbb R\times[-L,L]^c\right)$, Lemmas \ref{limsup} and \ref{lem3.1}  yields
\begin{align*}
	&\limsup_{\nu\to\infty}\frac{1}{\nu}\ln\mu_{\nu,1}(K_L^c)\\
	\leq&-Z_0+\limsup_{\nu\to\infty}\frac{1}{\nu}\ln\left(\int_{|p|\geq L}e^{-\nu p^2}\ud p\int_{\mbb R}e^{-2\nu V(q)}\ud q+\int_{\mbb R}e^{-\nu p^2}\ud p\int_{|q|\geq\ L}e^{-2\nu V(q)}\ud q\right)\\
	=&-Z_0+\max\left\{\limsup_{\nu\to\infty}\frac{1}{\nu}\ln\int_{|p|\geq L}e^{-\nu p^2}\ud p\int_{\mbb R}e^{-2\nu V(q)}\ud q,\limsup_{\nu\to\infty}\frac{1}{\nu}\ln\sqrt{\frac{\pi}{\nu}}\int_{|q|\geq\ L}e^{-2\nu V(q)}\ud q\right\}\\
	=&-Z_0+\max\left\{Z_0+\limsup_{\nu\to\infty}\frac{1}{\nu}\ln\int_{|p|\geq L}e^{-\nu p^2}\ud p,\limsup_{\nu\to\infty}\frac{1}{\nu}\ln\int_{|q|\geq\ L}e^{-2\nu V(q)}\ud q\right\}.
\end{align*}
Due to Lemma \ref{lem3.5},
\begin{gather*}
	\limsup_{\nu\to\infty}\frac{1}{\nu}\ln\int_{|p|\geq\ L}e^{-\nu p^2}\ud p\leq -\frac{L^2}{2},\\
	\limsup_{\nu\to\infty}\frac{1}{\nu}\ln\int_{|q|\geq\ L}e^{-2\nu V(q)}\ud q\leq -\eta L^\alpha.
\end{gather*} 
Combining the above formulas, we obtain 
\begin{align*}
	\limsup_{\nu\to\infty}\frac{1}{\nu}\ln\mu_{\nu,1}(K_L^c)\leq-Z_0+\max\left\{Z_0-\frac{L^2}{2},-\eta L^\alpha \right\}=\max\left\{-\frac{L^2}{2},-\eta L^\alpha -Z_0\right\}.
\end{align*}
Hence, we obtain
\begin{align*}
	\lim_{L\to\infty}\limsup_{\nu\to\infty}\frac{1}{\nu}\ln\mu_{\nu,1}(K_L^c)=-\infty,
\end{align*}  
which implies the exponential tightness of $\{\mu_{\nu,1}\}_{\nu>0}$.
 From Proposition \ref{weakfull}, it follows that $\{\mu_{\nu,1}\}_{\nu>0}$ satisfies an LDP with the good rate function $I+Z_0$.

\textit{Step 3: We show that for every $\epsilon>0$, $\{\mu_{\nu,\epsilon}\}_{\nu>0}$ satisfies an LDP with the good rate function $(I+Z_0)/\epsilon$.}

Note that  $\mu_{\nu,\epsilon}=\mu_{\nu/\epsilon,1}$ for every $\epsilon>0$. Let $t=\nu/\epsilon$. Then by the LDP for $\{\mu_{t,1}\}_{t>0}$, we have that
for every open set $U\subseteq \mbb R^2$,
\begin{gather*}
	\liminf_{\nu\to\infty}\frac{1}{\nu}\ln\mu_{\nu,\epsilon}(U)=\liminf_{\nu\to\infty}\frac{1}{\nu}\ln\mu_{\nu/\epsilon,1}(U)=\frac{1}{\epsilon}\liminf_{t\to\infty}\frac{1}{t}\ln\mu_{t,1}(U)\ge-\frac{1}{\epsilon}\inf_{(p,q)\in U}
	(I(p,q)+Z_0),
\end{gather*}
and for every closed set $C\subseteq \mbb R^2$,
\begin{gather*}
	\limsup_{\nu\to\infty}\frac{1}{\nu}\ln\mu_{\nu,\epsilon}(C)=\frac{1}{\epsilon}\limsup_{t\to\infty}\frac{1}{t}\ln\mu_{t,1}(C)\le-\frac{1}{\epsilon}\inf_{(p,q)\in C}(I(p,q)+Z_0).
\end{gather*}
The proof is completed.
\hfill $\square$

\vspace{2mm}

\noindent{\large\textbf{Proof of Theorem \ref{tho3.3}}.}

For any measurable set $F\subseteq\mbb R^2$, using $\mu_{\nu,\epsilon}=\mu_{\nu/\epsilon,1}$ gives
\begin{align*}
	\liminf_{\epsilon\to0}\epsilon\ln\mu_{\nu,\epsilon}(F)=&\liminf_{\epsilon\to0}\epsilon\ln\mu_{\nu/\epsilon,1}(F)=\nu\liminf_{t\to\infty}\frac{1}{t}\ln\mu_{t,1}(F),\\
	\limsup_{\epsilon\to0}\epsilon\ln\mu_{\nu,\epsilon}(F)=&\limsup_{\epsilon\to0}\epsilon\ln\mu_{\nu/\epsilon,1}(F)=\nu\limsup_{t\to\infty}\frac{1}{t}\ln\mu_{t,1}(F).
\end{align*}
Based on Theorem \ref{tho3.2} and the above formulas, we finish the proof. \hfill$\square$

\section{LDPs of invariant measures of numerical methods with small noise}\label{Sec4}
Let $\{P_n,Q_n\}_{n\geq0}$ be a numerical method  for \eqref{Lan}, i.e., $(P_n,Q_n)$ is used to approximate   $(P(t_n),Q(t_n))$, where $t_n=nh$, $n=0,1,2,\ldots$, and $h>0$ is the step-size.  
If  the numerical method $\{P_n,Q_n\}_{n\geq0}$ possesses a unique invariant measure $\mu_{\nu,\epsilon}^h$, one may ask whether $\{\mu_{\nu,\epsilon}^h\}$ satisfies two kinds of the LDPs as in the continuous case when  $\epsilon\to0$ (resp. $\nu$ and $h$ are fixed) or $\nu\to\infty$ (resp. $\epsilon$ and $h$ are fixed). Further, if $\{\mu_{\nu,\epsilon}^h\}_{\epsilon>0}$ \big(resp. $\{\mu_{\nu,\epsilon}^h\}_{\nu>0}$\big) satisfies the LDP with the rate function $I^h$ (resp. $J^h$), whether $I^h$ (resp. $J^h$) can approximate well the rate function $\nu(I+Z_0)$ (resp. $(I+Z_0)/\epsilon$) of $\{\mu_{\nu,\epsilon}\}_{\epsilon>0}$ (resp. $\{\mu_{\nu,\epsilon}\}_{\nu>0}$) for sufficiently small step-size. For this end, we give the definition of numerically asymptotical preservation  for the LDP of $\{\mu_{\nu,\epsilon}\}_{\epsilon>0}$ or $\{\mu_{\nu,\epsilon}\}_{\nu>0}$.

\begin{Def}\label{asyLDP}
	Let $\{P_n,Q_n\}_{n\geq0}$ be a numerical method  for \eqref{Lan} with $(P_0,Q_0)=(P(0),Q(0))$ and $\nu>0$ (resp. $\epsilon>0$) be fixed. Assume that there is some $h_0>0$  such that for any $h\leq h_0$ and all sufficiently small $\epsilon>0$ (resp. all sufficiently large $\nu>0$), $\{P_n,Q_n\}_{n\geq0}$ possesses a unique invariant measure $\mu_{\nu,\epsilon}^h$. Further, assume that for any $h\leq h_0$, $\{\mu_{\nu,\epsilon}^h\}_{\epsilon>0}$ (resp. $\{\mu_{\nu,\epsilon}^h\}_{\nu>0}$) satisfies the LDP with the rate function $I^h$ (resp. $J^h$). The numerical method $\{P_n,Q_n\}_{n\geq0}$ is said to asymptotically preserve the LDP of $\{\mu_{\nu,\epsilon}\}_{\epsilon>0}$ (resp. $\{\mu_{\nu,\epsilon}^h\}_{\nu>0}$) if for any $(p,q)\in\mbb R^2$,
	\begin{align*}
	\lim_{h\to0}I^h(p,q)=\nu(I(p,q)+Z_0) \qquad\big(resp.\, \lim_{h\to0}J^h(p,q)=(I(p,q)+Z_0)/\epsilon\big).
	\end{align*}
\end{Def}


 In this section, we focus on the linear case with $V(q)=\frac{1}{2}q^2$. We show that general numerical methods can asymptotically preserve the LDP of $\{\mu_{\nu,\epsilon}\}_{\epsilon>0}$. And we will prove that the midpoint scheme can asymptotically preserve the LDP of $\{\mu_{\nu,\epsilon}\}_{\nu>0}$ in the next section.
For the case $V(q)=\frac{1}{2}q^2$, \textbf{Assumption 1} holds and $Z_0=0$, $I(p,q)=p^2+q^2$. Theorem \ref{tho3.3} shows that $\{\mu_{\nu,\epsilon}\}_{\epsilon>0}$ satisfies the LDP with the good rate function $\nu(p^2+q^2)$.
We consider the general numerical method  of the following form 
\begin{align}\label{Mthd}
\left(\begin{array}{c}
P_{n+1}\\\\
Q_{n+1}
\end{array}\right)=
\left(
\begin{array}{cc}
a_{11}(h)&a_{12}(h)\\\\
a_{21}(h)&a_{22}(h)
\end{array}\right)
\left(\begin{array}{cc}
P_n\\\\
Q_n
\end{array}\right)
+\sqrt{\epsilon} 
\left(\begin{array}{cc}
b_1(h)\\\\
b_2(h)
\end{array}\right)\Delta W_{n},\qquad n=0,1,2,\ldots
\end{align}
with $(P_0,Q_0)=(P(0),Q(0))=(p,q)$, where $\Delta W_{n}= W(t_{n+1})-W(t_n)$ with $t_n=nh$, $n=1,2,\ldots$, and  $a_{ij},\,b_i: (0,\infty)\to\mbb R$, $i,j=1,2$ are  the functions of step-size $h$ and determined by a concrete method. The functions $a_{ij},\,b_i$, $i,j=1,2$ depend on the parameter $\nu$ but are independent of $\epsilon>0$.

By defining functions
\begin{align*}
A(h):=\left(
\begin{array}{cc}
a_{11}(h)&a_{12}(h)\\
\\
a_{21}(h)&a_{22}(h)
\end{array}\right),\qquad b(h):=	\left(\begin{array}{cc}
b_1(h)\\\\
b_2(h)
\end{array}\right), \qquad\forall\quad h>0,
\end{align*}
we rewrite \eqref{Mthd} as
\begin{align}\label{sec4recur}
\left(\begin{array}{c}
P_{n+1}\\
Q_{n+1}
\end{array}\right)=
A(h)\left(\begin{array}{cc}
P_n\\
Q_n
\end{array}\right)
+\sqrt{\epsilon}
b(h)\Delta W_{n}, \qquad n=0,1,2,\ldots
\end{align}
Hereafter, we alway omit the argument $h$ if no confusion occurs. 

For given functions $f,g:(0,+\infty)\to \mbb R$,  $f(h)=\mcal O(h^p)$  stands for $|f(h)|\leq Ch^p$ for all sufficiently small $h>0$ and $f(h)=g(h)+\mcal O(h^p)$ stands for $f(h)-g(h)=\mcal O(h^p)$, where $C$ is a positive constant  independent of $h$.
 In addition, $f(h)\sim g(h)$ means $\lim_{h\to0}f(h)/g(h)=1$.
In order to show that \eqref{Mthd} asymptotically preserves the LDP of $\{\mu_{\nu,\epsilon}\}_{\epsilon>0}$, we impose the following assumption.

\vspace{2mm}
{\large\textbf{Assumption 2.}} The functions $a_{i,j}\,,b_i$, $i,j=1,2$ satisfy
\begin{align*}
	|a_{11}-1+\nu h|+|a_{12}+h|+|a_{21}-h|+|a_{22}-1|=\mcal O(h^2),\quad |b_1-1|+|b_2|=\mcal O(h).
\end{align*}

\textbf{Assumption 2} is given to ensure that \eqref{Mthd} for \eqref{Lan} with $V(q)=\frac{1}{2}q^2$ has at least first order convergence in the mean-square sense. 
This can be proved based on the fundamental theorem 
by comparing the one-step approximation of \eqref{Mthd} and Euler-Maruyama method. 
We will prove that under \textbf{Assumption 2}, \eqref{Mthd} can asymptotically preserve the LDP of $\{\mu_{\nu,\epsilon}\}_{\epsilon>0}$. 

\subsection{Existence and uniqueness of invariant measure}

The objective of this part is to show that \eqref{Mthd} has a unique invariant measure, which  is realized by deriving the general formulas of the numerical solutions $\{(P_n,Q_n)\}_{n\geq0}$ and their stable distributions. 

\begin{lem}\label{lem4.2}
	Under \textbf{Assumption 2}, it holds that
	\begin{itemize}
		\item[(1)] $(tr(A))^2-4det(A)=(\nu^2-4)h^2+\mcal O(h^3)$;
		\item[(2)] For all sufficiently small step-size $h$,  $|tr(A)|<1+det(A)<2$ or equivalently $|\lambda_i|<1$, $i=1,2$, where $\lambda_1$ and $\lambda_2$ are the eigenvalues of $A$. 
	\end{itemize}
Here $tr(A)$ and $det(A)$ denote the trace and determinant of $A$ respectively.
\end{lem}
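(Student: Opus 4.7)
The plan is to read Assumption 2 as $a_{11}=1-\nu h+\alpha_1$, $a_{12}=-h+\alpha_2$, $a_{21}=h+\alpha_3$, $a_{22}=1+\alpha_4$ with each $\alpha_i=\mcal O(h^2)$, substitute these into $tr(A)$ and $det(A)$, and then collect terms by their order in $h$. The only subtlety is that the individual $h^2$--coefficients of $\alpha_i$ are not resolved by Assumption 2, so any computation whose leading $h^2$ term depends on a single $\alpha_i$ will fail, whereas terms where the $\alpha_i$'s cancel are safe.

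For item (1) I first compute
\begin{equation*}
tr(A)=2-\nu h+\alpha_1+\alpha_4,\qquad (tr(A))^2=4-4\nu h+\nu^2 h^2+4(\alpha_1+\alpha_4)+\mcal O(h^3),
\end{equation*}
using that cross-products like $\nu h(\alpha_1+\alpha_4)$ and $(\alpha_1+\alpha_4)^2$ are already $\mcal O(h^3)$. For the determinant, expanding $a_{11}a_{22}$ and $a_{12}a_{21}$ and discarding products of two or more $\mcal O(h)$-type factors yields
\begin{equation*}
det(A)=(1-\nu h+\alpha_1)(1+\alpha_4)-(-h+\alpha_2)(h+\alpha_3)=1-\nu h+h^2+\alpha_1+\alpha_4+\mcal O(h^3),
\end{equation*}
so $4det(A)=4-4\nu h+4h^2+4(\alpha_1+\alpha_4)+\mcal O(h^3)$. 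The key observation is that $\alpha_1+\alpha_4$ appears with the \emph{same} coefficient $4$ in both $(tr(A))^2$ and $4det(A)$, so it cancels in the difference, leaving $(tr(A))^2-4det(A)=(\nu^2-4)h^2+\mcal O(h^3)$. This cancellation is really the crux of (1); without it the remainder would only be of order $h^2$ and the claim would fail.

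For item (2) the same expansions give
\begin{equation*}
1+det(A)-tr(A)=h^2+\mcal O(h^3),\qquad 1+det(A)+tr(A)=4-2\nu h+\mcal O(h^2),
\end{equation*}
both strictly positive for all sufficiently small $h$, so $|tr(A)|<1+det(A)$. Moreover, $1+det(A)-2=det(A)-1=-\nu h+\mcal O(h^2)<0$ for small $h$ since $\nu>0$, so $1+det(A)<2$. The equivalence with $|\lambda_i|<1$ then follows from the standard Schur--Cohn (Jury) stability criterion for a quadratic $\lambda^2-tr(A)\lambda+det(A)$: both roots lie in the open unit disc iff $|tr(A)|<1+det(A)$ and $|det(A)|<1$, and the first inequality automatically gives $det(A)>-1$, so the joint condition collapses to $|tr(A)|<1+det(A)<2$.

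The main obstacle is genuinely the cancellation in (1): one must track the $\mcal O(h^2)$ remainders of $a_{11}$ and $a_{22}$ carefully enough to see that they enter $(tr(A))^2$ and $4det(A)$ identically, rather than merely bounding them in modulus. Everything else is a direct substitution together with the classical degree--two stability criterion.
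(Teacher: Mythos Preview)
Your proof is correct and follows essentially the same route as the paper: direct expansion of $tr(A)$ and $det(A)$ under Assumption~2, followed by the standard degree-two stability criterion for the characteristic polynomial. The only organizational difference is that for (1) the paper uses the identity $(tr(A))^2-4det(A)=(tr(A)-2)^2-4\bigl(1+det(A)-tr(A)\bigr)$ together with $1+det(A)-tr(A)=(1-a_{11})(1-a_{22})-a_{12}a_{21}$, which automatically pushes the unresolved $\mcal O(h^2)$ remainders into $\mcal O(h^3)$ products and so bypasses the explicit cancellation of $4(\alpha_1+\alpha_4)$ that you track by hand.
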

\begin{proof}
(1)	Using \textbf{Assumption 2},
	\begin{gather}\label{trA}
		tr(A)=a_{11}+a_{22}=1-\nu h+\mcal O(h^2)+1+\mcal O(h^2)=2-\nu h+\mcal O(h^2).
	\end{gather}
Further, we have
\begin{align}\label{sec4k2}
&1+det(A)-tr(A)
=(1-a_{11})(1-a_{22})-a_{12}a_{21} 
=h^2+\mcal O(h^3).
\end{align}
By \eqref{trA} and \eqref{sec4k2},
\begin{align*}
	(tr(A))^2-4det(A)=&(tr(A)-2)^2-4(1+det(A)-tr(A)) \nonumber\\=&(-\nu h+\mcal O(h^2))^2-4(h^2+\mcal O(h^3)) 
	=(\nu^2-4)h^2+\mcal O(h^3).
\end{align*}

(2) By \textbf{Assumption 2}, we get after a calculation that
	\begin{align}\label{detA}
det(A)=&a_{11}a_{22}-a_{12}a_{21}
=1-\nu h+\mcal O(h^2),
\end{align}
which implies that for  sufficiently small $h$, $1+det(A)<2$.
It follows from \eqref{trA} and \eqref{detA} that
\begin{align*}
	tr(A)+1+det(A)=4-2\nu h+\mcal O(h^2),
\end{align*}
and thereby
$tr(A)>-(1+det(A))$ for  $h$ small enough. By \eqref{sec4k2}, $tr(A)<1+det(A)$ if $h$ is small enough. Thus, we have
$|tr(A)|<1+det(A)<2$ for all sufficiently small $h$.

Using the facts that $\lambda_{1,2}$ are the roots of $det(\lambda I_2-A)=\lambda^2-tr(A)\lambda+det(A)=0$ and that the moduli of both of the two roots (may be complex-valued) of equation $x^2-cx-d=0$, $c,d\in\mbb R$, are smaller than $1$ if and only if $|c|<1-d<2$, we complete the proof.
\end{proof}

\begin{theo}\label{tho4.3}
	Let  $\nu>0$ be fixed. If \textbf{Assumption 2} holds and $\nu\neq2$, then for all sufficiently small step-size $h>0$ and any $\epsilon>0$, the numerical solution $\{(P_n,Q_n)\}_{n\geq0}$ possesses a unique invariant measure $\mu_{\nu,\epsilon}^h=\mcal N(0,\Sigma)$ with
	\begin{align*}
		\Sigma=\frac{\epsilon h}{(\lambda_2-\lambda_1)^2}\begin{pmatrix}
		\Sigma_{11}&\Sigma_{12}\\\\
		\Sigma_{21}&\Sigma_{22}
		\end{pmatrix},
	\end{align*}
	where $\lambda_{1,2}=\frac{1}{2}\left(tr(A)\pm\sqrt{(tr(A))^2-4det(A)}\right)$ are the eigenvalues of $A$ and
	\begin{align}
		\Sigma_{11}:=&\frac{1}{1-\lambda_1^2}\Big(a_{12}b_2+b_1(a_{11}-\lambda_2)\Big)^2+\frac{1}{1-\lambda_2^2}\Big(a_{12}b_2+b_1(a_{11}-\lambda_1)\Big)^2\nonumber\\
		&+\frac{1}{1-\lambda_1\lambda_2}\Big(2a_{12}a_{21}b_1^2-2a_{12}^2b_2^2+2a_{12}b_1b_2(a_{22}-a_{11})\Big),\label{Sigma11}\\
		\Sigma_{22}:=&\frac{1}{1-\lambda_1^2}\Big(a_{21}b_1+b_2(\lambda_1-a_{11})\Big)^2+\frac{1}{1-\lambda_2^2}\Big(a_{21}b_1+b_2(\lambda_2-a_{11})\Big)^2\nonumber\\
		&+\frac{1}{1-\lambda_1\lambda_2}\Big(2a_{12}a_{21}b_2^2-2a_{21}^2b_1^2-2a_{21}b_1b_2(a_{22}-a_{11})\Big),\label{Sigma22}\\
		\Sigma_{12}=\Sigma_{21}:=&\frac{1}{1-\lambda_1^2}\Big(a_{21}(a_{11}-\lambda_2)b_1^2+a_{12}(\lambda_1-a_{11})b_2^2+2a_{12}a_{21}b_1b_2\Big)\nonumber\\
		&+\frac{1}{1-\lambda_2^2}\Big(a_{21}(a_{11}-\lambda_1)b_1^2+a_{12}(\lambda_2-a_{11})b_2^2+2a_{12}a_{21}b_1b_2\Big)\nonumber\\
		&+\frac{1}{1-\lambda_1\lambda_2}\Big(a_{22}-a_{11}\Big)\Big(a_{21}b_1^2-a_{12}b_2^2+b_1b_2(a_{22}-a_{11})\Big).\label{Sigma12}
	\end{align}
\end{theo}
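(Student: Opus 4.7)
The plan is to exploit the linear Gaussian structure of the recursion to compute the stationary distribution directly and then identify it via the discrete analogue of Proposition \ref{pro2.8}. Iterating \eqref{sec4recur} from $(P_0,Q_0)=(p,q)$ gives
\begin{align*}
\begin{pmatrix}P_n\\ Q_n\end{pmatrix}=A^n\begin{pmatrix}p\\ q\end{pmatrix}+\sqrt{\epsilon}\sum_{k=0}^{n-1}A^{n-1-k}b\,\Delta W_k,
\end{align*}
which is Gaussian with mean $A^n(p,q)^T$ and covariance $\epsilon h\sum_{k=0}^{n-1}A^{k}bb^T(A^k)^T$. By Lemma \ref{lem4.2}(2), for all sufficiently small $h$ the spectral radius of $A$ is strictly less than $1$, so the mean converges to $0$ and the covariance series converges absolutely to $\Sigma_\infty:=\epsilon h\sum_{k=0}^{\infty}A^{k}bb^T(A^k)^T$. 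Hence the law of $(P_n,Q_n)$ converges weakly to $\mcal N(0,\Sigma_\infty)$ for every initial point. Since the one-step transition kernel of \eqref{sec4recur} is Gaussian with smooth density (with covariance $\epsilon h\,bb^T$ translated by the deterministic shift $Ax$), the associated Markov semigroup is Feller, and the discrete version of Proposition \ref{pro2.8} yields that $\mu^h_{\nu,\epsilon}=\mcal N(0,\Sigma_\infty)$ is the unique invariant measure.

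It remains to identify $\Sigma_\infty$ with the explicit matrix in the statement. The hypothesis $\nu\neq 2$ combined with Lemma \ref{lem4.2}(1) ensures that, for $h$ small enough, $(tr(A))^2-4det(A)\neq 0$, so $A$ is diagonalisable with distinct eigenvalues $\lambda_1,\lambda_2$. Writing the spectral decomposition $A=\lambda_1 v_1 w_1^T+\lambda_2 v_2 w_2^T$ with right eigenvectors $v_i=(a_{12},\lambda_i-a_{11})^T$ and biorthogonal left eigenvectors
\begin{align*}
w_1=\frac{1}{a_{12}(\lambda_2-\lambda_1)}\begin{pmatrix}\lambda_2-a_{11}\\ -a_{12}\end{pmatrix},\qquad w_2=\frac{1}{a_{12}(\lambda_2-\lambda_1)}\begin{pmatrix}a_{11}-\lambda_1\\ a_{12}\end{pmatrix},
\end{align*}
one gets $A^k b=\lambda_1^k(w_1^T b)v_1+\lambda_2^k(w_2^T b)v_2$, and summing the resulting rank-one geometric series produces
\begin{align*}
\Sigma_\infty=\epsilon h\left[\frac{(w_1^T b)^2}{1-\lambda_1^2}v_1 v_1^T+\frac{(w_2^T b)^2}{1-\lambda_2^2}v_2 v_2^T+\frac{(w_1^T b)(w_2^T b)}{1-\lambda_1\lambda_2}\bigl(v_1 v_2^T+v_2 v_1^T\bigr)\right].
\end{align*}
Substituting the explicit eigenvector and covector entries and collecting entry-wise will reproduce \eqref{Sigma11}--\eqref{Sigma12}; the overall prefactor $(\lambda_2-\lambda_1)^{-2}$ comes from squaring the $w_i$ denominators, and the factor $a_{12}$ inside each $v_i$ cancels the $a_{12}^{-1}$ inside each $w_i$, so the formula is insensitive to $a_{12}$ being small.

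The main obstacle is the bookkeeping in the last step: one must track the three families of terms arising from $v_1 v_1^T$, $v_2 v_2^T$ and the symmetrised cross product $v_1 v_2^T+v_2 v_1^T$, and recognise that in $\Sigma_{12}$ the asymmetry $\lambda_i-a_{11}$ combines with the $w_i^T b$ coefficients to yield the $(a_{22}-a_{11})$-factors (via $tr(A)=\lambda_1+\lambda_2$). An alternative verification that bypasses the eigenvector calculus is to plug the candidate $\Sigma$ into the discrete Lyapunov equation $\Sigma=A\Sigma A^T+\epsilon h\,bb^T$, which by uniqueness (guaranteed by $\rho(A)<1$) characterises $\Sigma_\infty$; this also clarifies why the case $\nu=2$ must be excluded, since it is precisely the value at which the two eigenvalues may coalesce and $A$ degenerate to a Jordan block, invalidating the denominators $\lambda_2-\lambda_1$ in the stated formula.
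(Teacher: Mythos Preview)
Your proposal is correct and follows essentially the same route as the paper: both iterate \eqref{sec4recur} to exhibit $(P_n,Q_n)$ as Gaussian, invoke Lemma~\ref{lem4.2} to get distinct eigenvalues with modulus $<1$, diagonalise $A$, sum the resulting geometric series in $\lambda_1,\lambda_2$, and conclude via the discrete Proposition~\ref{pro2.8}. The only difference is organisational---the paper writes out the entries $A_{ij}(n)$ of $A^n$ explicitly and computes $\mbf{Var}(P_n)$, $\mbf{Var}(Q_n)$, $\mbf{Cov}(P_n,Q_n)$ separately, using the Vieta identities $(\lambda_2-a_{11})(a_{11}-\lambda_1)=a_{12}a_{21}$ and $\lambda_1+\lambda_2-2a_{11}=a_{22}-a_{11}$ to simplify, whereas you package the same computation through the spectral projectors $v_iw_i^T$---but the underlying algebra is identical.
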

\begin{proof}
	Solving the equation $det(\lambda I_2-A)=0$, we obtain 
	 $\lambda_{1,2}=\frac{1}{2}\left(tr(A)\pm\sqrt{(tr(A))^2-4det(A)}\right)$.
	By Lemma \ref{lem4.2}(1), $(tr(A))^2-4det(A)\sim (\nu^2-4)h^2$. If $\nu\neq 2$, for all sufficiently  small $h$, $(tr(A))^2-4det(A)\neq 0$, which yields that  $\lambda_1\neq \lambda_2$ and hence $A$ is diagonalizable. 		By a standard computation, the components of $A^n=(A_{ij}(n)),\,n=0,1,\ldots$, are given by 
	\begin{gather*}
	A_{11}(n)=\frac{1}{\lambda_2-\lambda_1}\big(a_{11}(\lambda_2^n-\lambda_1^n)+(\lambda_1^n\lambda_2-\lambda_1\lambda_2^n)\big),\nonumber\\
	A_{12}(n)=\frac{a_{12}(\lambda_2^n-\lambda_1^n)}{\lambda_2-\lambda_1},\qquad A_{21}(n)=\frac{a_{21}(\lambda_2^n-\lambda_1^n)}{\lambda_2-\lambda_1},\\
	A_{22}(n)=\frac{1}{\lambda_2-\lambda_1}\big(\lambda_2^{n+1}-\lambda_1^{n+1}+a_{11}(\lambda_1^n-\lambda_2^n)\big).
	\end{gather*}
From \eqref{sec4recur}, we obtain the general formula of $\{(P_n,Q_n)\}_{n\geq1}$ as follows
\begin{align*}
	P_n=&A_{11}(n)p+A_{12}(n)q+\sqrt{\epsilon}\sum_{j=0}^{n-1}\left[A_{11}(n-1-j)b_1+A_{12}(n-j-1)b_2\right]\Delta W_j,\\
	Q_n=&A_{21}(n)p+A_{22}(n)q+\sqrt{\epsilon}\sum_{j=0}^{n-1}\left[A_{21}(n-1-j)b_1+A_{22}(n-j-1)b_2\right]\Delta W_j.
\end{align*}
Due to Lemma \ref{lem4.2}(2), $|\lambda_i|<1$, $i=1,2$, for $h$ small enough. By the definitions of  $A_{ij}$, $i,j=1,2$,
\begin{align*}\lim\limits_{n\to\infty}\mbf E P_n=\lim\limits_{n\to\infty}A_{11}(n)p+\lim\limits_{n\to\infty}A_{12}(n)q=0,\\ \lim\limits_{n\to\infty}\mbf EQ_n=\lim\limits_{n\to\infty}A_{21}(n)p+\lim\limits_{n\to\infty}A_{22}(n)q=0 .
\end{align*}

Notice that
\begin{align}
	\mbf {Var}(P_n)=&\epsilon h\sum_{j=0}^{n-1}\left[b_1^2A_{11}^2(j)+b_2^2A_{12}^2(j)+2b_1b_2A_{11}(j)A_{12}(j)\right], \label{VarP}\\
	\mbf {Var}(Q_n)=&\epsilon h\sum_{j=0}^{n-1}\left[b_1^2A_{21}^2(j)+b_2^2A_{22}^2(j)+2b_1b_2A_{21}(j)A_{22}(j)\right],\label{VarQ}\\
	\mbf{Cov}(P_n,Q_n)=&\epsilon h\sum_{j=0}^{n-1}\left[b_1^2A_{11}(j)A_{21}(j)+b_2^2A_{12}(j)A_{22}(j)+b_1b_2\big(A_{11}(j)A_{22}(j)+A_{12}(j)A_{21}(j)\big)\right].\label{CovPQ}
\end{align}
Next, we compute the limits $\lim\limits_{n\to\infty}\mbf{Var}(P_n)$, $\lim_{n\to\infty}\limits\mbf{Var}(Q_n)$ and $\lim_{n\to\infty}\limits\mbf{Cov}(P_n,Q_n)$. 
In fact, 
\begin{align*}
	\lim_{n\to\infty}\sum_{j=0}^{n-1}A_{11}^2(j)=&\lim_{n\to\infty}\sum_{j=0}^{n-1}\frac{1}{(\lam)^2}\left[(\lambda_2-a_{11})^2\lambda_1^{2j}+(\lambda_1-a_{11})^2\lambda_2^{2j}+2(\lambda_2-a_{11})(a_{11}-\lambda_1)(\lambda_1\lambda_2)^j\right]\\
	=&\frac{1}{(\lam)^2}\left[\frac{(\lambda_2-a_{11})^2}{1-\lambda_1^2}+\frac{(\lambda_1-a_{11})^2}{1-\lambda_2^2}+\frac{2(\lambda_2-a_{11})(a_{11}-\lambda_1)}{1-\lambda_1\lambda_2}\right].
\end{align*}
Further, we have 
\begin{align*}
	\lim_{n\to\infty}\sum_{j=0}^{n-1}A_{12}^2(j)&=\frac{a_{12}^2}{(\lam)^2}\left(\frac{1}{1-\lambda_1^2}+\frac{1}{1-\lambda_2^2}-\frac{2}{1-\lambda_1\lambda_2}\right), \\
	\lim_{n\to\infty}\sum_{j=0}^{n-1}A_{11}(j)A_{12}(j)&=\frac{a_{12}}{(\lam)^2}\left(\frac{a_{11}-\lambda_2}{1-\lambda_1^2}+\frac{a_{11}-\lambda_1}{1-\lambda_2^2}+\frac{\lambda_1+\lambda_2-2a_{11}}{1-\lambda_1\lambda_2}\right).
\end{align*}
By the above formulas and \eqref{VarP},
\begin{align}
	\lim_{n\to\infty}\mbf {Var}(P_n)=&\epsilon h\left[\frac{b_1^2}{(\lam)^2}\left(\frac{(\lambda_2-a_{11})^2}{1-\lambda_1^2}+\frac{(\lambda_1-a_{11})^2}{1-\lambda_2^2}+\frac{2(\lambda_2-a_{11})(a_{11}-\lambda_1)}{1-\lambda_1\lambda_2}\right)\right.\nonumber\\
	&+\frac{a_{12}^2b_2^2}{(\lam)^2}\left(\frac{1}{1-\lambda_1^2}+\frac{1}{1-\lambda_2^2}-\frac{2}{1-\lambda_1\lambda_2}\right)\nonumber\\
	&\left.+\frac{2a_{12}b_1b_2}{(\lam)^2}\left(\frac{a_{11}-\lambda_2}{1-\lambda_1^2}+\frac{a_{11}-\lambda_1}{1-\lambda_2^2}+\frac{\lambda_1+\lambda_2-2a_{11}}{1-\lambda_1\lambda_2}\right)\right].\label{sec4k3}
\end{align}
Noting that $\lambda_1\lambda_2=det(A)=a_{11}a_{22}-a_{12}a_{21}$ and $\lambda_1+\lambda_2=tr(A)=a_{11}+a_{22}$, we have 
\begin{align}\label{sec4k4}
(\lambda_2-a_{11})(a_{11}-\lambda_1)=a_{12}a_{21},
\qquad \lambda_1+\lambda_2-2a_{11}=a_{22}-a_{11}.
\end{align}
Substituting \eqref{sec4k4} into \eqref{sec4k3} and by rearranging, we obtain $\lim_{n\to\infty}\mbf {Var}(P_n)=\frac{\epsilon h\Sigma_{11}}{(\lam)^2}$.

Similarly, by  $|\lambda_{1,2}|<1$, \eqref{sec4k4} and the fact $(\lambda_1-a_{11})^2+(\lambda_2-a_{11})^2=2a_{12}a_{21}+(a_{11}-a_{22})^2$,
\begin{gather*}
	\lim_{n\to\infty}\sum_{j=0}^{n-1}A_{21}^2(j)=\frac{a_{21}^2}{(\lam)^2}\left(\frac{1}{1-\lambda_1^2}+\frac{1}{1-\lambda_2^2}-\frac{2}{1-\lambda_1\lambda_2}\right), \\
	\lim_{n\to\infty}\sum_{j=0}^{n-1}A_{22}^2(j)=\frac{1}{(\lam)^2}\left(\frac{(a_{11}-\lambda_1)^2}{1-\lambda_1^2}+\frac{(\lambda_2-a_{11})^2}{1-\lambda_2^2}+\frac{2a_{12}a_{21}}{1-\lambda_1\lambda_2}\right),\\
	\lim_{n\to\infty}\sum_{j=0}^{n-1}A_{21}(j)A_{22}(j)=\frac{a_{21}}{(\lam)^2}\left(\frac{\lambda_1-a_{11}}{1-\lambda_1^2}+\frac{\lambda_2-a_{11}}{1-\lambda_2^2}-\frac{a_{22}-a_{11}}{1-\lambda_1\lambda_2}\right),\\
	\lim_{n\to\infty}\sum_{j=0}^{n-1}A_{11}(j)A_{21}(j)=\frac{a_{21}}{(\lam)^2}\left(\frac{a_{11}-\lambda_2}{1-\lambda_1^2}+\frac{a_{11}-\lambda_1}{1-\lambda_2^2}+\frac{a_{22}-a_{11}}{1-\lambda_1\lambda_2}\right),\\
	\lim_{n\to\infty}\sum_{j=0}^{n-1}A_{12}(j)A_{22}(j)=\frac{a_{12}}{(\lam)^2}\left(\frac{\lambda_1-a_{11}}{1-\lambda_1^2}+\frac{\lambda_2-a_{11}}{1-\lambda_2^2}-\frac{a_{22}-a_{11}}{1-\lambda_1\lambda_2}\right),\\
	\lim_{n\to\infty}\sum_{j=0}^{n-1}A_{11}(j)A_{22}(j)=\frac{1}{(\lam)^2}\left(\frac{a_{12}a_{21}}{1-\lambda_1^2}+\frac{a_{12}a_{21}}{1-\lambda_2^2}+\frac{2a_{12}a_{21}+(a_{11}-a_{22})^2}{1-\lambda_1\lambda_2}\right),\\
	\lim_{n\to\infty}\sum_{j=0}^{n-1}A_{12}(j)A_{21}(j)=\frac{a_{12}a_{21}}{(\lam)^2}\left(\frac{1}{1-\lambda_1^2}+\frac{1}{1-\lambda_2^2}-\frac{2}{1-\lambda_1\lambda_2}\right).
\end{gather*}
Substituting the above formulas into \eqref{VarQ} and \eqref{CovPQ} yields
$\lim_{n\to\infty}\mbf {Var}(Q_n)=\frac{\epsilon h\Sigma_{22}}{(\lam)^2}$ and $\lim_{n\to\infty}\mbf{Cov}(P_n,Q_n)=\frac{\epsilon h\Sigma_{12}}{(\lam)^2}$.
Therefore,
 for any $(p,q)\in\mbb R^2$, 
 the law of $(P_n,Q_n)$ weakly converges to $\mu_{\nu,\epsilon}^h=\mcal N(0,\Sigma)$ as $n$ tends to $\infty$. For any $n\ge1$ and $\varphi\in\mbf B_b(\mbb R^2)$, define $\mscr P_{n}\varphi(p,q)=\mbf E[\varphi(P_n,Q_n)]$, where $(P_0,Q_0)=(p,q)$. 
 Then $\mscr P_{n}$ is \emph{Feller} because $\{(P_n,Q_n)\}_{n\geq 0}$ admits a smooth transition density. 
 Finally,
 it follows from Proposition \ref{pro2.8} that the numerical solution $\{(P_n,Q_n)\}_{n\geq 0}$ possesses the unique invariant measure $\mu_{\nu,\epsilon}^h$. 
 \end{proof}

\begin{rem}\label{rem4.4}
As is shown in the proof of Theorem \ref{tho4.3}, as long as  $A$ has two different eigenvalues $\lambda_{1,2}$ with $|\lambda_{1,2}|<1$, the numerical method \eqref{Mthd} has a unique invariant measure given by $\mcal N(0,\Sigma)$. 
\end{rem}

\subsection{Asymptotically preserving the LDP of $\{\mu_{\nu,\epsilon}\}_{\epsilon>0}$}
In this part,
based on Theorem \ref{tho4.3}, 
 we derive the LDP of $\{\mu_{\nu,\epsilon}^h\}_{\epsilon>0}$ by utilizing G\"artner--Ellis theorem.
Further, the numerical method \eqref{Mthd}
is shown to asymptotically preserve the LDP of $\{\mu_{\nu,\epsilon}\}_{\epsilon>0}$. For preparation, we give the following lemma.

\begin{lem}\label{lem4.4}
	Let \textbf{Assumption 2} hold and $\nu\neq 2$. Then we have
$\Sigma_{11}\sim \frac{\nu^2-4}{2\nu}h$,  $\Sigma_{22}\sim \frac{\nu^2-4}{2\nu}h$ and $\lim_{h\to0}\Sigma_{12}/h=\lim_{h\to0}\Sigma_{21}/h=0$.
\end{lem}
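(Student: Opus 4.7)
The approach is to develop asymptotic expansions in $h$ for every ingredient entering formulas \eqref{Sigma11}--\eqref{Sigma12} and then to collect the leading terms.  From \textbf{Assumption 2} I record $a_{11}=1-\nu h+\mathcal O(h^2)$, $a_{12}=-h+\mathcal O(h^2)$, $a_{21}=h+\mathcal O(h^2)$, $a_{22}=1+\mathcal O(h^2)$, $b_1=1+\mathcal O(h)$, $b_2=\mathcal O(h)$.  Setting $\mu_i:=1-\lambda_i$, Lemma \ref{lem4.2} yields $\mu_1+\mu_2=2-tr(A)=\nu h+\mathcal O(h^2)$ and $\mu_1\mu_2=1-tr(A)+det(A)=h^2+\mathcal O(h^3)$, while the assumption $\nu\neq 2$ keeps the discriminant $(\mu_1-\mu_2)^2=(\mu_1+\mu_2)^2-4\mu_1\mu_2 =(\nu^2-4)h^2+\mathcal O(h^3)$ at exact order $h^2$, so both $\mu_i$ are of exact order $h$.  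From these I read off $1-\lambda_i^2=\mu_i(2-\mu_i)=2\mu_i+\mathcal O(h^2)$, $1-\lambda_1\lambda_2=1-det(A)=\nu h+\mathcal O(h^2)$, and the handy identities $a_{11}-\lambda_2=\mu_2-\nu h+\mathcal O(h^2)=-\mu_1+\mathcal O(h^2)$, $a_{11}-\lambda_1=-\mu_2+\mathcal O(h^2)$.

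Next I would treat $\Sigma_{11}$.  Since $a_{12}b_2=\mathcal O(h^2)$ is swallowed by $b_1(a_{11}-\lambda_i)=\mathcal O(h)$, the first summand in \eqref{Sigma11} reduces to $(a_{11}-\lambda_2)^2/(1-\lambda_1^2)=\mu_1^2/(2\mu_1)+\mathcal O(h^2)=\mu_1/2+\mathcal O(h^2)$ and the second to $\mu_2/2+\mathcal O(h^2)$.  In the third summand the term $2a_{12}a_{21}b_1^2=-2h^2+\mathcal O(h^3)$ dominates the other two pieces (which are $\mathcal O(h^3)$), so that summand equals $(-2h^2)/(\nu h)+\mathcal O(h^2)=-2h/\nu+\mathcal O(h^2)$.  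Adding the three contributions gives $\Sigma_{11}=(\mu_1+\mu_2)/2-2h/\nu+\mathcal O(h^2)=\nu h/2-2h/\nu+\mathcal O(h^2)=\tfrac{\nu^2-4}{2\nu}h+\mathcal O(h^2)$, which proves $\Sigma_{11}\sim\tfrac{\nu^2-4}{2\nu}h$.  The calculation for $\Sigma_{22}$ is parallel: here $a_{21}b_1=h+\mathcal O(h^2)$ is the dominant contribution in each bracket's numerator and the three pieces assemble into the same leading order $\tfrac{\nu^2-4}{2\nu}h+\mathcal O(h^2)$.

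The estimate of $\Sigma_{12}$ is more delicate because it hinges on an exact cancellation of the $\mathcal O(h)$ contributions.  A direct expansion shows that the first fraction in \eqref{Sigma12} equals $-h/2+\mathcal O(h^2)$ (its numerator is dominated by $a_{21}(a_{11}-\lambda_2)b_1^2\sim -h\mu_1$, divided by $2\mu_1$), the second fraction equals $-h/2+\mathcal O(h^2)$ by the $\lambda_1\leftrightarrow\lambda_2$ symmetry, and the third fraction equals $+h+\mathcal O(h^2)$ (its numerator factors as $(\nu h+\mathcal O(h^2))(h+\mathcal O(h^2))$ while its denominator is $\nu h+\mathcal O(h^2)$).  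The three leading $\mathcal O(h)$ pieces sum to $-h/2-h/2+h=0$, so $\Sigma_{12}=\mathcal O(h^2)$, and hence $\Sigma_{12}/h\to 0$ as $h\to 0$.  The main obstacle is precisely this cancellation: each of the three fractions is individually of order $h$, so every intermediate estimate must be carried consistently to the next order, making systematic use of $\mu_1+\mu_2=\nu h+\mathcal O(h^2)$ and $a_{11}-\lambda_i=-\mu_{3-i}+\mathcal O(h^2)$; a spurious $\mathcal O(h)$ remainder left in any numerator or denominator would destroy the cancellation and give a wrong leading order.
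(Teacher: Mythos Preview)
Your argument is correct and is in fact cleaner than the paper's.  The paper first assumes $\nu>2$, expands the eigenvalues explicitly as $\lambda_{1,2}=1-\tfrac{\nu\mp\sqrt{\nu^2-4}}{2}\,h+\mathcal O(h^{3/2})$, and then tracks every piece of $\Sigma_{11},\Sigma_{22},\Sigma_{12}$ through these expansions; the case $0<\nu<2$ is dispatched at the very end by the remark that one replaces $\sqrt{\nu^2-4}$ by $\bm{i}\sqrt{4-\nu^2}$ throughout.  Your device of setting $\mu_i=1-\lambda_i$ and working only with the elementary symmetric functions $\mu_1+\mu_2=2-tr(A)=\nu h+\mathcal O(h^2)$ and $\mu_1\mu_2=1-tr(A)+det(A)=h^2+\mathcal O(h^3)$ avoids both the explicit square root and the case split: the identities $a_{11}-\lambda_j=-\mu_{3-j}+\mathcal O(h^2)$ and $1-\lambda_i^2=\mu_i(2-\mu_i)$ reduce everything to those two symmetric quantities, and the computation goes through uniformly whether $\mu_1,\mu_2$ are real or a complex-conjugate pair.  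One small point worth spelling out in your write-up: the treatment of $\Sigma_{22}$ is not literally ``parallel'' to that of $\Sigma_{11}$, because the first two summands now have numerators $\sim h^2$ independent of $i$; to combine them you need the additional step $\tfrac{1}{\mu_1}+\tfrac{1}{\mu_2}=\tfrac{\mu_1+\mu_2}{\mu_1\mu_2}\sim\tfrac{\nu}{h}$ rather than the direct cancellation $\mu_i^2/(2\mu_i)=\mu_i/2$ that you exploited for $\Sigma_{11}$.
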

\begin{proof}
	First we consider the case $\nu>2$.  In this case, by Lemma \ref{lem4.2}(1), for all sufficiently small $h$, $A$ has two real-valued eigenvalues $\lambda_{1,2}=\frac{1}{2}\left(tr(A)\pm\sqrt{(tr(A))^2-4det(A)}\right)$. It follows from \ref{lem4.2}(1) and \eqref{trA} that
	\begin{gather}
		\lambda_1=\frac{1}{2}\left(2-\nu h+\mcal O(h^2)+\sqrt{\nu^2-4}\,h+\mcal O(h^{3/2})\right)=1-\frac{\nu-\sqrt{\nu^2-4}}{2}h+\mcal O(h^{3/2}). \label{lam1}\\
		\lambda_2=\frac{1}{2}\left(2-\nu h+\mcal O(h^2)-\sqrt{\nu^2-4}\,h+\mcal O(h^{3/2})\right)=1-\frac{\nu+\sqrt{\nu^2-4}}{2}h+\mcal O(h^{3/2}), \label{lam2}
	\end{gather}
	which leads to
	\begin{gather*}
		\lambda_1^2=1-\left(\nu-\sqrt{\nu^2-4}\right)h+\mcal O(h^{3/2}),\qquad	\lambda_2^2=1-\left(\nu+\sqrt{\nu^2-4}\right)h+\mcal O(h^{3/2}).
	\end{gather*}
Thus, it holds that
\begin{align}
	(1-\lambda_1^2)\sim \left(\nu-\sqrt{\nu^2-4}\right)h,\qquad (1-\lambda_2^2)\sim \left(\nu+\sqrt{\nu^2-4}\right)h. \label{1-lam^2}
\end{align}
According to \textbf{Assumption 2} and \eqref{lam2}, $$a_{11}-\lambda_2=1-\nu h+\mcal O(h^2)-1+\frac{\nu+\sqrt{\nu^2-4}}{2}h+\mcal O(h^{3/2})=\frac{\sqrt{\nu^2-4}-\nu}{2}h+\mcal O(h^{3/2}).$$
Hence, 
\begin{align*}
	&\left(a_{12}b_2+b_1(a_{11}-\lambda_2)\right)^2=a_{12}^2b_2^2+b_1^2(a_{11}-\lambda_2)^2+2a_{12}b_1b_2(a_{11}-\lambda_2) \\
	=&\mcal O(h^4)+(1+\mcal O(h))^2\left(\frac{1}{4}\left(\nu-\sqrt{\nu^2-4}\right)^2h^2+\mcal O(h^{5/2})\right)+\mcal O(h^{3})\\
	=&\frac{1}{4}\left(\nu-\sqrt{\nu^2-4}\right)^2h^2+\mcal O(h^{5/2}).
\end{align*}
That is
	$\left(a_{12}b_2+b_1(a_{11}-\lambda_2)\right)^2\sim \frac{1}{4}(\nu-\sqrt{\nu^2-4})^2h^2.$
Similarly, one has
	$\left(a_{12}b_2+b_1(a_{11}-\lambda_1)\right)^2\sim \frac{1}{4}(\nu+\sqrt{\nu^2-4})^2h^2.$
Due to \textbf{Assumption 2} and \eqref{detA},
\begin{gather}
	1-\lambda_1\lambda_2=1-det(A)=\nu h+\mcal O(h^2)\sim \nu h, \label{sec4k7} \\
	2a_{12}a_{21}b_1^2-2a_{12}^2b_2^2+2a_{12}b_1b_2(a_{22}-a_{11})=\left(-2h^2+\mcal O(h^3)\right)+\mcal O(h^4)+\mcal O(h^3)\sim -2h^2. \label{sec4k8}
\end{gather}
It follows from \eqref{1-lam^2}-\eqref{sec4k8} and the definition of $\Sigma_{11}$ that
\begin{align*}
	\Sigma_{11}=&\frac{1}{1-\lambda_1^2}\Big(a_{12}b_2+b_1(a_{11}-\lambda_2)\Big)^2+\frac{1}{1-\lambda_2^2}\Big(a_{12}b_2+b_1(a_{11}-\lambda_1)\Big)^2\\
	&+\frac{1}{1-\lambda_1\lambda_2}\Big(2a_{12}a_{21}b_1^2-2a_{12}^2b_2^2+2a_{12}b_1b_2(a_{22}-a_{11})\Big)\\
	\sim&\frac{\left(\nu-\sqrt{\nu^2-4}\right)^2h^2}{4\left(\nu-\sqrt{\nu^2-4}\right)h}+\frac{\left(\nu+\sqrt{\nu^2-4}\right)^2h^2}{4\left(\nu+\sqrt{\nu^2-4}\right)h}+\frac{-2h^2}{\nu h}\\
	\sim& (\frac{\nu}{2}-\frac{2}{\nu})h=\frac{\nu^2-4}{2\nu}h.
\end{align*}

Also, one can prove that
\begin{gather}
	\left(a_{21}b_1+b_2(\lambda_1-a_{11})\right)^2\sim h^2,\qquad \left(a_{21}b_1+b_2(\lambda_2-a_{11})\right)^2\sim h^2,\label{sec4k9}\\
	2a_{12}a_{21}b_2^2-2a_{21}^2b_1^2-2a_{21}b_1b_2(a_{22}-a_{11})\sim -2h^2.\label{sec4k10}
	\end{gather}
Combining \eqref{1-lam^2}, \eqref{sec4k7}, \eqref{sec4k9} and \eqref{sec4k10}, we obtain
\begin{align*}
	\Sigma_{22}=&\frac{1}{1-\lambda_1^2}\Big(a_{21}b_1+b_2(\lambda_1-a_{11})\Big)^2+\frac{1}{1-\lambda_2^2}\Big(a_{21}b_1+b_2(\lambda_2-a_{11})\Big)^2\\
	&+\frac{1}{1-\lambda_1\lambda_2}\Big(2a_{12}a_{21}b_2^2-2a_{21}^2b_1^2-2a_{21}b_1b_2(a_{22}-a_{11})\Big)\\
	\sim&\frac{h^2}{\left(\nu-\sqrt{\nu^2-4}\right)h}+\frac{h^2}{\left(\nu+\sqrt{\nu^2-4}\right)h}+\frac{-2h^2}{\nu h}\\
	\sim&\frac{\nu^2-4}{2\nu}h.
\end{align*}

It remains to estimate $\Sigma_{12}$. Again by \eqref{lam1}, \eqref{lam2} and \textbf{Assumption 2},
\begin{align}
	S_1:&=a_{21}(a_{11}-\lambda_2)b_1^2+a_{12}(\lambda_1-a_{11})b_2^2+2a_{12}a_{21}b_1b_2=\frac{\sqrt{\nu^2-4}-\nu}{2}h^2+\mcal O(h^3),\label{S1}\\
	S_2:&=a_{21}(a_{11}-\lambda_1)b_1^2+a_{12}(\lambda_2-a_{11})b_2^2+2a_{12}a_{21}b_1b_2=-\frac{\sqrt{\nu^2-4}+\nu}{2}h^2+\mcal O(h^3),\label{S2}\\
	S_3:&=(a_{22}-a_{11})\left(a_{21}b_1^2-a_{12}b_2^2+b_1b_2(a_{22}-a_{11})\right)=\nu h^2+\mcal O(h^3)\label{S3}.
\end{align}
By \eqref{1-lam^2} and \eqref{sec4k7}, we have
\begin{gather*}
	\lim_{h\to0}\frac{S_1}{(1-\lambda_1^2)h}=\lim_{h\to0}\frac{\left(\sqrt{\nu^2-4}-\nu\right)h^2/2}{\left(\nu-\sqrt{\nu^2-4}\right)h^2}=-\frac{1}{2},\\
	\lim_{h\to0}\frac{S_2}{(1-\lambda_2^2)h}=\lim_{h\to0}\frac{-\left(\sqrt{\nu^2-4}+\nu\right)h^2/2}{\left(\nu+\sqrt{\nu^2-4}\right)h^2}=-\frac{1}{2},\\
	\lim_{h\to0}\frac{S_3}{\left(1-\lambda_1\lambda_2\right)h}=\frac{\nu h^2}{\nu h^2}=1.
\end{gather*}
Thus, by the above formulas,
\begin{align*}
	\lim_{h\to0}\frac{\Sigma_{12}}{h}=\lim_{h\to0}\frac{\Sigma_{21}}{h}=\lim_{h\to0}\left(\frac{S_1}{(1-\lambda_1^2)h}+\frac{S_2}{(1-\lambda_2^2)h}+\frac{S_3}{\left(1-\lambda_1\lambda_2\right)h}\right)=0.
\end{align*}

Finally, if $\nu<2$, then $\lambda_{1,2}$ are complex numbers for all sufficiently small $h>0$. In this case, repeating the above procedure just by replacing $\sqrt{\nu^2-4}$ by $\bm{i}\sqrt{4-\nu^2}$ with $\bm{i}^2=-1$, one can show that the conclusions hold.
This finishes the proof.
\end{proof}

Now we prove the main result of this section.
\begin{theo}\label{tho4.4}
	For the numerical method \eqref{Mthd} approximating \eqref{Lan} with $V(q)=\frac{1}{2}q^2$, let \textbf{Assumption 2} hold and $\nu\neq2$ be fixed. Then
	\begin{itemize}
		\item[(1)] For all sufficiently small $h>0$, the invariant measure $\{\mu_{\nu,\epsilon}^h\}_{\epsilon>0}$ of \eqref{Mthd} satisfies an LDP with the good rate function $I^h$ given by
		\begin{align}\label{I^h}
		I^h(p,q):=\frac{\left((tr(A))^2-4det(A)\right)\left(\Sigma_{22}p^2+\Sigma_{11}q^2-2\Sigma_{12}pq\right)}{2h\left(\Sigma_{11}\Sigma_{22}-\Sigma_{12}^2\right)}.
		\end{align}
 
 \item[(2)] The numerical method \eqref{Mthd} asymptotically preserves the LDP of $\{\mu_{\nu,\epsilon}\}_{\epsilon>0}$, i.e.,
 \begin{align*}
 	\lim_{\epsilon\to0}I^h(p,q)=\nu(p^2+q^2),\qquad\forall \quad(p,q)\in\mbb R^2.
 \end{align*} 
	\end{itemize}
\end{theo}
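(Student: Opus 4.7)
The plan is to derive part (1) via the G\"artner--Ellis theorem (Theorem \ref{GE}), exploiting the fact that by Theorem \ref{tho4.3} the invariant measure is the centered Gaussian $\mu_{\nu,\epsilon}^h=\mcal N(0,\Sigma)$ with covariance $\Sigma=\epsilon\,\widehat{\Sigma}$, where
\begin{equation*}
\widehat{\Sigma}:=\frac{h}{(\lam)^2}\begin{pmatrix}\Sigma_{11}&\Sigma_{12}\\ \Sigma_{12}&\Sigma_{22}\end{pmatrix}
\end{equation*}
is independent of $\epsilon$. Because $\Sigma$ scales linearly in $\epsilon$, the logarithmic moment generating function $\Lambda_{\mu_{\nu,\epsilon}^h}(\lambda)=\tfrac{1}{2}\lambda^{T}\Sigma\lambda=\tfrac{\epsilon}{2}\lambda^{T}\widehat{\Sigma}\lambda$ produces the clean limit $\Lambda(\lambda):=\lim_{\epsilon\to0}\epsilon\,\Lambda_{\mu_{\nu,\epsilon}^h}(\lambda/\epsilon)=\tfrac{1}{2}\lambda^{T}\widehat{\Sigma}\lambda$, which is finite valued and (being a quadratic form) Gateaux differentiable on $\mbb R^{2}$. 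Taking the Fenchel--Legendre transform $\Lambda^{*}(x)=\sup_{\lambda\in\mbb R^{2}}\{\langle\lambda,x\rangle-\Lambda(\lambda)\}=\tfrac{1}{2}x^{T}\widehat{\Sigma}^{-1}x$ and inverting the $2\times2$ matrix $\widehat{\Sigma}$ will give precisely the formula \eqref{I^h} for $I^{h}$.

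To apply Theorem \ref{GE} I still must verify the exponential tightness of $\{\mu_{\nu,\epsilon}^h\}_{\epsilon>0}$. For the compact sets $K_L=[-L,L]^{2}$, the Gaussian tail bound combined with a union bound over the two coordinates gives
\begin{equation*}
\mu_{\nu,\epsilon}^h(K_L^c)\leq 2\exp\Bigl(-\frac{L^{2}}{2\epsilon\,\|\widehat{\Sigma}\|}\Bigr),
\end{equation*}
from which $\limsup_{\epsilon\to0}\epsilon\ln\mu_{\nu,\epsilon}^h(K_L^c)\leq -L^{2}/(2\|\widehat{\Sigma}\|)\to-\infty$ as $L\to\infty$. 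With exponential tightness established and $\Lambda$ Gateaux differentiable, Theorem \ref{GE} delivers the LDP with the good rate function $\Lambda^{*}$, and a direct computation of the inverse of the $2\times2$ matrix gives the stated expression \eqref{I^h} after identifying $(\lam)^{2}=(tr(A))^{2}-4\,det(A)$.

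For part (2), I would plug in the asymptotics collected in Lemma \ref{lem4.4} and Lemma \ref{lem4.2}(1). Since $\Sigma_{12}=o(h)$, $\Sigma_{12}^{2}=o(h^{2})$, whereas $\Sigma_{11}\Sigma_{22}\sim\bigl(\tfrac{\nu^{2}-4}{2\nu}\bigr)^{2}h^{2}$, so the determinant in the denominator of \eqref{I^h} satisfies $\Sigma_{11}\Sigma_{22}-\Sigma_{12}^{2}\sim\bigl(\tfrac{\nu^{2}-4}{2\nu}\bigr)^{2}h^{2}$. For the numerator, $\Sigma_{22}p^{2}+\Sigma_{11}q^{2}-2\Sigma_{12}pq\sim\tfrac{\nu^{2}-4}{2\nu}h(p^{2}+q^{2})$, while $(tr(A))^{2}-4\,det(A)\sim(\nu^{2}-4)h^{2}$. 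Assembling these gives
\begin{equation*}
I^{h}(p,q)\sim\frac{(\nu^{2}-4)h^{2}\cdot\tfrac{\nu^{2}-4}{2\nu}h(p^{2}+q^{2})}{2h\cdot\bigl(\tfrac{\nu^{2}-4}{2\nu}\bigr)^{2}h^{2}}=\nu(p^{2}+q^{2}),
\end{equation*}
which matches $\nu(I(p,q)+Z_{0})$ since $V(q)=\tfrac{1}{2}q^{2}$ yields $Z_{0}=0$ and $I(p,q)=p^{2}+q^{2}$.

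The only delicate points I foresee are cosmetic. First, the case $\nu<2$ makes $\lambda_{1,2}$ complex conjugates, but Lemma \ref{lem4.4} is stated uniformly (the same asymptotic identities hold after replacing $\sqrt{\nu^{2}-4}$ by $\mbf{i}\sqrt{4-\nu^{2}}$), so the quadratic form $\widehat{\Sigma}$ remains real, symmetric and positive definite for small $h$, and the argument goes through verbatim. Second, I must ensure $\widehat{\Sigma}^{-1}$ is well-defined, which follows because Lemma \ref{lem4.4} yields $\Sigma_{11}\Sigma_{22}-\Sigma_{12}^{2}>0$ for all sufficiently small $h$. No additional difficulty arises; the essential work (the sharp asymptotics) has already been done in Lemmas \ref{lem4.2} and \ref{lem4.4}.
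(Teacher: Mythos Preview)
Your proposal is correct and follows essentially the same route as the paper: compute the logarithmic moment generating function of the Gaussian $\mu_{\nu,\epsilon}^h=\mcal N(0,\epsilon\widehat{\Sigma})$, verify exponential tightness, apply G\"artner--Ellis (Theorem \ref{GE}), and read off $I^h=\tfrac{1}{2}x^{\top}\widehat{\Sigma}^{-1}x$; then use Lemmas \ref{lem4.2} and \ref{lem4.4} for the limit in part (2). The only minor deviation is in the exponential tightness step: the paper derives it from the finiteness of $\Lambda^h(\pm e_i)$ via Markov's inequality and Proposition \ref{limsup}, while you invoke a direct Gaussian tail bound on each coordinate---your route is a bit more economical here and equally valid.
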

\begin{proof}
	(1) We divide this proof into three steps.
	
	\textit{Step 1: We give the logarithmic moment generating function of $\{\mu_{\nu,\epsilon}^h\}_{\epsilon>0}$.}
		
	Let $X^\epsilon$ obey to the distribution $\mu_{\nu,\epsilon}^h=\mcal N(0,\Sigma)$, where $\Sigma=\frac{\epsilon h}{(\lam)^2}\begin{pmatrix}
	\Sigma_{11}&\Sigma_{12}\\
	\Sigma_{21}&\Sigma_{22}
	\end{pmatrix}$.
	For any $\theta=(\theta_1,\theta_2)^{\top}\in\mbb R^2$, the logarithmic moment generating function of $\{X^\epsilon\}_{\epsilon>0}$ is
	\begin{align*}
		\Lambda^h(\theta)=&\lim_{\epsilon\to0}\epsilon\ln \mbf Ee^{\langle X^\epsilon,\,\theta\rangle/\epsilon}=\lim_{\epsilon\to0}\epsilon\left[\frac{1}{2\epsilon^2}\mbf {Var}\langle X^\epsilon,\,\theta\rangle\right]=\lim_{\epsilon\to0}\frac{1}{2\epsilon}\theta^\top\Sigma\theta.
	\end{align*}
Since $\Sigma_{ij}$, $i,j=1,2$ are independent of $\epsilon$,
\begin{align}\label{Lam^h}
	\Lambda^h(\theta)=\frac{h\left(\Sigma_{11}\theta_1^2+\Sigma_{22}\theta_2^2+2\Sigma_{12}\theta_1\theta_2\right)}{2(\lambda_2-\lambda_1)^2}=\frac{h\left(\Sigma_{11}\theta_1^2+\Sigma_{22}\theta_2^2+2\Sigma_{12}\theta_1\theta_2\right)}{2\left((tr(A))^2-4det(A)\right)},
\end{align}
where we used the fact $(\lam)^2=(\lambda_1+\lambda_2)^2-4\lambda_1\lambda_2=(tr(A))^2-4det(A)$.

\vspace{2mm}
\textit{Step 2: We show that $\{\mu_{\nu,\epsilon}^h\}_{\epsilon>0}$ is exponentially smooth based on the finiteness of $\Lambda^h$.}

Let $e_1=(1,0)^\top$ and $e_2=(0,1)^\top$. Then, by \eqref{Lam^h},
\begin{align}\label{sec4finite}
	\Lambda^h(\pm e_1),\Lambda^h(\pm e_2)<+\infty.
\end{align}
For any $L>0$, by Markov's inequality,
\begin{align*}
	\mbf P\big(\langle X^\epsilon,\,e_1 \rangle>L/2\big)=\mbf P\big(e^{\langle X^\epsilon,\,e_1 \rangle/\epsilon}>e^{ L/(2\epsilon)}\big)\leq e^{-L/(2\epsilon)}\mbf Ee^{\langle X^\epsilon,\,e_1 \rangle/\epsilon}.
\end{align*}
According to the definition of $\Lambda^h$,
\begin{align*}
	\limsup_{\epsilon\to 0}\epsilon\ln \mbf P\big(\langle X^\epsilon,\,e_1 \rangle>L/2\big)\leq -\frac{L}{2}+\lim_{\epsilon\to\infty}\epsilon\ln\mbf Ee^{\langle X^\epsilon,\,e_1 \rangle/\epsilon}=-\frac{L}{2}+\Lambda^h(e_1).
\end{align*}
Similary, by Markov's inequality,
\begin{align*}
	&\limsup_{\epsilon\to0}\epsilon\ln \mbf P\big(\langle X^\epsilon,\,e_1 \rangle<-L/2\big)=\limsup_{\epsilon\to0}\epsilon\ln \mbf P\big(\langle X^\epsilon,\,-e_1 \rangle>L/2\big)
=-\frac{L}{2}+\Lambda^h(-e_1).
\end{align*}
Notice that $\mbf P\left(\left|\langle X^\epsilon,\,e_1 \rangle\right|>L/2\right)=\mbf P\left(\langle X^\epsilon,\,e_1 \rangle>L/2\right)+\mbf P\left(\langle X^\epsilon,\,e_1 \rangle<-L/2\right)$. It follows from Proposition \ref{limsup} that
\begin{align}\label{sec4k11}
	&\limsup_{\epsilon\to0}\epsilon\ln \mbf P\left(\left|\langle X^\epsilon,\,e_1 \rangle\right|>L/2\right)\nonumber\\
	=&\max\left\{\limsup_{\epsilon\to0}\epsilon\ln \mbf P\big(\langle X^\epsilon,\,e_1 \rangle>L/2\big),\limsup_{\epsilon\to0}\epsilon\ln \mbf P\big(\langle X^\epsilon,\,e_1 \rangle<-L/2\big)\right\}\nonumber\\
	\leq&\max\left\{-\frac{L}{2}+\Lambda^h(e_1),\,-\frac{L}{2}+\Lambda^h(-e_1)\right\}.
\end{align}
Similarly, one can prove
\begin{align}\label{sec4k12}
	\limsup_{\epsilon\to0}\epsilon\ln \mbf P\big(\left|\langle X^\epsilon,\,e_2 \rangle\right|>L/2\big)\leq\max\left\{-\frac{L}{2}+\Lambda^h(e_2),-\frac{L}{2}+\Lambda^h(-e_2)\right\}.
\end{align}
Hence,  combining  Proposition \ref{limsup}, \eqref{sec4k11} and \eqref{sec4k12} leads to
\begin{align}
	&\limsup_{\epsilon\to0}\epsilon\ln\mbf P\big(|X^\epsilon|>L\big)\nonumber\\
	\leq&\limsup_{\epsilon\to0}\epsilon\ln\Big[\mbf P\left(\left|\langle X^\epsilon,\,e_1 \rangle\right|>L/2\right)+\mbf P\left(\left|\langle X^\epsilon,\,e_2 \rangle\right|>L/2\right)\Big]\nonumber\\
	= &\max\left\{\limsup_{\epsilon\to0}\epsilon\ln\mbf P\left(\left|\langle X^\epsilon,\,e_1 \rangle\right|>L/2\right),\,\limsup_{\epsilon\to0}\epsilon\ln\mbf P\left(\left|\langle X^\epsilon,\,e_2 \rangle\right|>L/2\right)\right\}\nonumber\\
	\leq &-\frac{L}{2}+\max\left\{
	\Lambda^h(\pm e_1),	\Lambda^h(\pm e_2)\right\}.\label{sec4k13}
\end{align}
Denote $K_L=\bar B(0,L)$, which is the compact subset of $\mbb R^2$.
Using \eqref{sec4finite} and \eqref{sec4k13} gives
\begin{align*}
	\lim_{L\to\infty}\lim_{\epsilon\to 0}\epsilon\ln \mbf P\left(X^\epsilon\in K_L^c\right)=\lim_{L\to\infty}\lim_{\epsilon\to 0}\epsilon\ln \mbf P\left(|X^\epsilon|>L\right)=-\infty.
\end{align*}
Thus, the exponential tightness of $\{X^\epsilon\}_{\epsilon>0}$ or $\{\mu_{\nu,\epsilon}^h\}_{\epsilon}$ follows from Definition \ref{exptight}.

\vspace{2mm}
\textit{Step 3: We give the explicit expression of the rate function of $\{\mu_{\nu,\epsilon}^h\}_{\epsilon>0}$.}

Notice that $\Lambda^h$ is finite valued and Gateaux differentiable.
By Theorem \ref{GE}, for  sufficiently small $h$, $\{\mu_{\nu,\epsilon}^h\}_{\epsilon>0}$ satisfies an LDP with the good rate function $I^h(x)=\sup_{\theta\in\mbb R^2}\left\{\langle x,\,\theta\rangle-\Lambda^h(\theta)\right\}$ for any $x\in\mbb R^2$. 
 Introduce
\begin{align*}
		M=\frac{ h}{2\left((tr(A))^2-4det(A)\right)}\begin{pmatrix}
		\Sigma_{11}&\Sigma_{12}\\\\
		\Sigma_{21}&\Sigma_{22}
		\end{pmatrix}.
	\end{align*}
By Lemmas \ref{lem4.2} and \ref{lem4.4},  for sufficiently small $h$,
  $\Sigma_{11}\Sigma_{22}-\Sigma_{12}\Sigma_{21}>0$ and hence $det(M)>0$.
Noticing that $M=\frac{1}{2\epsilon}\Sigma$ and $\Sigma$ is non-negative definite, we have that 
$M$ is positive definite provided that $h$ is sufficiently small.
Then for $x=(p,q)^\top$,
\begin{align*}
 I^h(x)&=\sup_{\theta\in\mbb R^2}\left\{\langle x,\,\theta\rangle-\theta^{\top}M\theta\right\}=\sup_{\theta\in\mbb R^2}\left\{\langle M^{-\frac{1}{2}}x,\,M^{\frac{1}{2}}\theta\rangle-|M^{\frac{1}{2}}\theta|^2\right\}=\frac{1}{4}| M^{-\frac{1}{2}}x|^2\\
&=\frac{1}{4}x^{\top}M^{-1}x= \frac{\left((tr(A))^2-4det(A)\right)\left(\Sigma_{22}p^2+\Sigma_{11}q^2-2\Sigma_{12}pq\right)}{2h\left(\Sigma_{11}\Sigma_{22}-\Sigma_{12}^2\right)}.
\end{align*}
This proves \eqref{I^h}. Finally,
by Lemmas \ref{lem4.2} and \ref{lem4.4}, we deduce that
\begin{align*}
	\lim_{h\to0}I^h(p,q)=&\lim_{h\to0}\frac{(tr(A))^2-4det(A)}{2h^2}\frac{\frac{\Sigma_{22}}{h}p^2+\frac{\Sigma_{11}}{h}q^2-2\frac{\Sigma_{12}}{h}pq}{\frac{\Sigma_{11}}{h}\frac{\Sigma_{22}}{h}-\frac{\Sigma_{12}^2}{h^2}}=\frac{\nu^2-4}{2}\frac{p^2+q^2}{\frac{\nu^2-4}{2\nu}}=\nu(p^2+q^2).
\end{align*}
\end{proof}

\section{LDPs of invariant measures of numerical methods with strong dissipation}\label{Sec5}
In last section, we prove that the numerical method \eqref{Mthd} asymptotically preserves the LDP of $\{\mu_{\nu,\epsilon}\}_{\epsilon>0}$. In this section, we study the preservation of numerical methods for the LDP of $\{\mu_{\nu,\epsilon}\}_{\nu>0}$. We still restrict our study to the linear case with $V(q)=\frac{1}{2}q^2$. We will show that the LDP of invariant measures $\{\mu_{\nu,\epsilon}^h\}_{\nu>0}$ of numerical method in the strong dissipation limit  is quite different from the LDP of $\{\mu_{\nu,\epsilon}^h\}_{\epsilon>0}$ in the small noise limit (Recall that $\mu_{\nu,\epsilon}^h=\mcal N(0,\Sigma)$). 
Since $A$ and $b$ in \eqref{Mthd} depend on the parameter $\nu$, the logarithmic moment generating functions of $\{\mu_{\nu,\epsilon}^h\}_{\nu>0}$ can not be explicitly given. In what follows, we study the LDP of $\{\mu_{\nu,\epsilon}^h\}_{\nu>0}$ of the stochastic $\theta$-method, $\theta\in[1/2,1]$. We find that  the midpoint scheme ($\theta=1/2$) can asymptotically preserve the LDP of $\{\mu_{\nu,\epsilon}\}_{\nu>0}$, while the stochastic $\theta$-method, $\theta\in(1/2,1]$ fails to do. This differs from the result of Section \ref{Sec4} where  we prove that any numerical method in the form of \eqref{Mthd} asymptotically preserves the LDP of $\{\mu_{\nu,\epsilon}\}_{\epsilon>0}$.


The stochastic $\theta$-method $(\frac{1}{2}\le\theta\le 1)$ for equation \eqref{Lan} with $V(q)=\frac{1}{2}q^2$ reads
\begin{align}\label{stheta}
\left(\begin{array}{c}
P_{n+1}\\
Q_{n+1}
\end{array}\right)=
A_\theta(h)\left(\begin{array}{cc}
P_n\\
Q_n
\end{array}\right)
+\sqrt{\epsilon}
b_\theta(h)\Delta W_{n}, \qquad n=0,1,2,\ldots
\end{align}
with
\begin{align*}
A_\theta(h):=\left(
\begin{array}{cc}
\frac{1-(\theta h^2+\nu h)(1-\theta)}{1+\triangle}&\frac{-h}{1+\triangle}\\
\\
\frac{h}{1+\triangle}&\frac{1-\theta(1-\theta)h^2+\nu \theta h}{1+\triangle}
\end{array}\right),\qquad b_\theta(h):=	\left(\begin{array}{cc}
\frac{1}{1+\triangle}\\\\
\frac{\theta h}{1+\triangle}
\end{array}\right) 
\end{align*}
and $\triangle=\theta(\theta h^2+\nu h)$.

\begin{lem}\label{lem5.2}
Let $\theta\in[1/2,1]$ and $\epsilon>0$ be fixed. For any  $h>0$ and  $\nu>2$, the stochastic $\theta$-method \eqref{stheta}  possesses a unique invariant measure $\mcal N(0,\Sigma_{\theta})$ with 
	\begin{align*}
\Sigma_{\theta}=\frac{\epsilon h}{(\lambda_2-\lambda_1)^2}\begin{pmatrix}
\Sigma_{11}&\Sigma_{12}\\\\
\Sigma_{21}&\Sigma_{22}
\end{pmatrix},
\end{align*} 
where $\lambda_{1,2}=\frac{2+\nu h(2\theta-1)-2\theta(1-\theta)h^2\pm\sqrt{\nu^2-4}h}{2(1+\triangle)}$
are the eigenvalues of $A_\theta$ and $\Sigma_{11},\Sigma_{22},\Sigma_{12}=\Sigma_{21}$ are given according to  \eqref{Sigma11}, \eqref{Sigma22} and \eqref{Sigma12}, respectively.
\end{lem}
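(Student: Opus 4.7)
The plan is to recognize this statement as a direct application of the general framework developed for the class of methods \eqref{Mthd}, specifically Theorem \ref{tho4.3} together with Remark \ref{rem4.4}. That framework tells us: whenever the iteration matrix $A$ has two distinct eigenvalues of modulus strictly less than $1$, the scheme admits the unique Gaussian invariant measure $\mathcal{N}(0,\Sigma)$ with the stated covariance formula. So the proof reduces to identifying the stochastic $\theta$-method as a special case of \eqref{Mthd} and verifying the two spectral conditions for $A_\theta$.

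First, I would read off that \eqref{stheta} fits \eqref{Mthd} with $a_{ij}$ being the entries of $A_\theta$ and $b_i$ the entries of $b_\theta$. Next, I would compute
\[
\mathrm{tr}(A_\theta)=\frac{2-2\theta(1-\theta)h^{2}+\nu h(2\theta-1)}{1+\triangle},\qquad
\det(A_\theta)=\frac{(1-(1-\theta)(\theta h^{2}+\nu h))(1-\theta(1-\theta)h^{2}+\nu\theta h)+h^{2}}{(1+\triangle)^{2}},
\]
and then expand $(\mathrm{tr}(A_\theta))^{2}-4\det(A_\theta)$; after cancellation, it collapses to $(\nu^{2}-4)h^{2}/(1+\triangle)^{2}$, which is the discriminant needed to justify the stated eigenvalue formula
\[
\lambda_{1,2}=\frac{2+\nu h(2\theta-1)-2\theta(1-\theta)h^{2}\pm\sqrt{\nu^{2}-4}\,h}{2(1+\triangle)}.
\]
For $\nu>2$ this discriminant is strictly positive, so $\lambda_1\neq \lambda_2$ and $A_\theta$ is diagonalizable for every $h>0$.

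The step I expect to take the most care is verifying $|\lambda_{1,2}|<1$ for every $h>0$, since unlike the situation in Theorem \ref{tho4.3} we cannot take $h$ small. By the algebraic criterion recalled in Lemma \ref{lem4.2}(2), it suffices to show
\[
|\mathrm{tr}(A_\theta)|<1+\det(A_\theta)<2
\]
for all $h>0$, $\nu>2$ and $\theta\in[1/2,1]$. I would clear denominators and rewrite each inequality as a polynomial condition in $h$. Using the explicit expressions above, $1+\det(A_\theta)-\mathrm{tr}(A_\theta)$ simplifies to $h^{2}/(1+\triangle)$, which is strictly positive; and $1+\det(A_\theta)+\mathrm{tr}(A_\theta)$ simplifies to a polynomial whose positivity uses $\theta\ge 1/2$ (so that $\nu h(2\theta-1)\ge 0$). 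The bound $1+\det(A_\theta)<2$ becomes a similar polynomial inequality, and again one checks that the contribution $\theta\nu h\ge 0$ forces the strict inequality for every $h>0$ and $\nu>0$.

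Having confirmed that $\lambda_1\neq \lambda_2$ and $|\lambda_{1,2}|<1$, Remark \ref{rem4.4} (the extension of Theorem \ref{tho4.3} that does not require smallness of $h$) applies verbatim with $A=A_\theta$ and $b=b_\theta$: the numerical chain is Feller with a smooth transition density, the law of $(P_n,Q_n)$ converges weakly to the Gaussian $\mathcal{N}(0,\Sigma_\theta)$ with $\Sigma_\theta$ given by substituting the entries of $A_\theta,b_\theta$ and the eigenvalues $\lambda_{1,2}$ into \eqref{Sigma11}--\eqref{Sigma12}, and Proposition \ref{pro2.8} then identifies this Gaussian as the unique invariant measure of \eqref{stheta}.
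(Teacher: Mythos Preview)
Your proposal is correct and follows essentially the same route as the paper: compute $\mathrm{tr}(A_\theta)$ and $\det(A_\theta)$, verify the three Schur--Cohn type inequalities $1-\det(A_\theta)>0$, $1+\det(A_\theta)-\mathrm{tr}(A_\theta)>0$, $1+\det(A_\theta)+\mathrm{tr}(A_\theta)>0$ (the last two using $\theta\ge 1/2$), conclude $|\lambda_{1,2}|<1$, note $\lambda_1\ne\lambda_2$ from the discriminant $(\nu^2-4)h^2/(1+\triangle)^2>0$ when $\nu>2$, and invoke Remark~\ref{rem4.4}. The paper's proof is the same in structure; it merely records the simplified form $\det(A_\theta)=\frac{1-\nu h(1-\theta)+(1-\theta)^2h^2}{1+\triangle}$ and writes the three inequalities as explicit positive fractions.
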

\begin{proof}
	First, we have
	\begin{gather*}
	tr(A_\theta)=\frac{2+\nu h(2\theta-1)-2\theta(1-\theta)h^2}{1+\triangle},\quad det(A_\theta)=\frac{1-\nu h(1-\theta)+(1-\theta)^2h^2}{1+\triangle}.
	\end{gather*}
Hence, we have that for any $h,\nu>0$,
\begin{gather*}
	1-det(A_{\theta})=\frac{(2\theta-1)h^2+\nu h}{1+\Delta}>0,\\
	1+det(A_{\theta})-tr(A_{\theta})=\frac{h^2}{1+\Delta}>0,\\
	1+det(A_{\theta})+tr(A_{\theta})=\frac{(2\theta-1)^2h^2+2\nu h(2\theta-1)+4}{1+\Delta}>0.
\end{gather*}
These imply $|tr(A_\theta)|<1+det(A_\theta)<2$. By Lemma \ref{lem4.2}, it follows that $|\lambda_{1,2}|<1$.
Next, it is verified that for any $h>0$ and $\nu>2$,
$\lambda_1\neq\lambda_2$.
Finally, by Remark \ref{rem4.4}, the stochastic $\theta$-method \eqref{stheta} admits a unique invariant measure given by $\mcal N(0,\Sigma_\theta)$ for any $h>0$ and $\nu>2$.
\end{proof}

\begin{rem}
	If $\theta\in[0,1/2)$, then $2\theta-1<0$. In this case, for any fixed $h>0$, there is no constant $\nu_0(h)$ such that for any $\nu>\nu_0(h)$, $1+det(A_{\theta})+tr(A_{\theta})>0$. In this case, $|\lambda_1|>1$ or $|\lambda_2|>1$, the existence of invariant measure of the method \eqref{stheta} can not be  ensured for  sufficiently large $\nu$.
\end{rem}

\begin{theo}\label{tho5.3}
	Let $\theta\in[1/2,1]$ be fixed. Then
	the stochastic $\theta$-method  \eqref{stheta} asymptotically preserves the LDP of $\{\mu_{\nu,\epsilon}\}_{\nu>0}$ if and only if $\theta=1/2.$
	
\end{theo}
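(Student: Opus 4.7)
My plan is to apply the G\"artner--Ellis theorem (Theorem \ref{GE}) to the Gaussian family $\{\mu_{\nu,\epsilon}^h\}_{\nu>2}$ provided by Lemma \ref{lem5.2} and then to study the $h\to 0$ limit of the resulting rate function. Since $\mu_{\nu,\epsilon}^h=\mcal N(0,\Sigma_\theta)$, for $X\sim\mu_{\nu,\epsilon}^h$ and $\lambda\in\mbb R^2$ one has $\tfrac{1}{\nu}\ln\mbf E\,e^{\nu\langle X,\lambda\rangle}=\tfrac{\nu}{2}\lambda^\top\Sigma_\theta\lambda$, so the whole problem reduces to identifying the limit matrix $M(h,\theta):=\lim_{\nu\to\infty}\nu\Sigma_\theta$. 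G\"artner--Ellis then yields the LDP with rate function $J^h(x)=\sup_{\lambda\in\mbb R^2}\{\langle\lambda,x\rangle-\tfrac12\lambda^\top M(h,\theta)\lambda\}$, while exponential tightness of the Gaussian family is checked exactly as in Step~2 of the proof of Theorem \ref{tho4.4}. Asymptotic preservation is equivalent to $M(h,\theta)\to(\epsilon/2)I_2$ as $h\to 0$, which is what must be verified case by case in $\theta$.

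The central task is the $\nu\to\infty$ asymptotic expansion of the entries of $\Sigma_\theta=\frac{\epsilon h}{(\lambda_2-\lambda_1)^2}\widetilde{\Sigma}$ with $\widetilde{\Sigma}_{ij}$ given by \eqref{Sigma11}--\eqref{Sigma12}. Using the explicit formulas of Lemma \ref{lem5.2} one expands
\[
\lambda_1\to 1,\qquad \lambda_2\to\tfrac{\theta-1}{\theta},\qquad 1-\lambda_1^2\sim\tfrac{2h}{\nu},\qquad (\lambda_2-\lambda_1)^2\to\tfrac{1}{\theta^2},
\]
together with $a_{11}-\lambda_2\sim-\tfrac{1}{\theta\nu^2}$, $a_{11}-\lambda_1\to-\tfrac{1}{\theta}$, $a_{22}-a_{11}\to\tfrac{1}{\theta}$, $b_1\sim\tfrac{1}{\theta\nu h}$, $b_2\sim\tfrac{1}{\nu}$, $a_{12}\sim-\tfrac{1}{\theta\nu}$, $a_{21}\sim\tfrac{1}{\theta\nu}$. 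The decisive quantity is $1-\lambda_2^2$: for $\theta\in(1/2,1]$ it tends to the positive constant $(2\theta-1)/\theta^2$, whereas for $\theta=1/2$ one has $\lambda_2\to-1$ and $1-\lambda_2^2\sim 8/(\nu h)$, so $1/(1-\lambda_2^2)$ acquires an extra factor $\nu h$.

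For $\theta=1/2$, this extra factor makes the summand containing $1/(1-\lambda_2^2)$ dominate in $\widetilde{\Sigma}_{11}$ and the summand containing $1/(1-\lambda_1^2)$ dominate in $\widetilde{\Sigma}_{22}$, both giving $\widetilde{\Sigma}_{11},\widetilde{\Sigma}_{22}\sim 2/(\nu h)$; combined with $\epsilon h/(\lambda_2-\lambda_1)^2\to\epsilon h/4$, this yields $\nu\Sigma_{1/2,11},\nu\Sigma_{1/2,22}\to\epsilon/2$. A parallel computation shows that the three summands in $\widetilde{\Sigma}_{12}$ cancel at leading order, forcing $\nu\Sigma_{1/2,12}\to 0$, so $M(h,1/2)\to(\epsilon/2)I_2$ as $h\to 0$ and $\lim_{h\to 0}J^h(p,q)=(p^2+q^2)/\epsilon$, proving that the midpoint scheme asymptotically preserves the LDP.

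For $\theta\in(1/2,1]$, the boundedness of $1/(1-\lambda_2^2)$ restricts the summands of $\widetilde{\Sigma}_{11}$ to $O(1/(\nu^2 h^2))$, forcing $\nu\Sigma_{\theta,11}=O(1/(\nu h))\to 0$, whereas the analogous analysis still gives $\nu\Sigma_{\theta,22}\to\epsilon/2$ and $\nu\Sigma_{\theta,12}\to 0$. Thus $M(h,\theta)$ is the singular matrix $\mathrm{diag}(0,\epsilon/2)$, and the resulting rate function is $J^h(p,q)=q^2/\epsilon$ for $p=0$ and $+\infty$ otherwise; its $h\to 0$ limit disagrees with $(p^2+q^2)/\epsilon$ on $\{p\neq 0\}$, so asymptotic preservation fails. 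I expect the main obstacle to be the bookkeeping in this asymptotic analysis: each $\widetilde{\Sigma}_{ij}$ is a sum of three summands whose leading orders swap as $\theta$ crosses $1/2$, and the delicate cancellation producing $\nu\Sigma_{1/2,12}\to 0$ is analogous to the one exploited in Lemma \ref{lem4.4}.
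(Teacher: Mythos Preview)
Your proposal is correct and follows essentially the same route as the paper: apply G\"artner--Ellis to the Gaussian family $\{\mu^h_{\nu,\epsilon}\}_{\nu>2}$, reduce everything to the limit $M(h,\theta)=\lim_{\nu\to\infty}\nu\Sigma_\theta$, and treat the cases $\theta=1/2$ and $\theta\in(1/2,1]$ separately according to whether $1-\lambda_2^2$ stays bounded away from zero. The exponential tightness argument and the resulting rate functions are exactly those of the paper.

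Two minor points where the paper is sharper. First, the paper computes each of the nine building blocks $\Psi_{ijk}=a(\nu,\epsilon,h)\cdot(\text{$k$th summand of }\Sigma_{ij})$ via exact algebraic simplification (e.g.\ $2a_{12}a_{21}b_1^2-2a_{12}^2b_2^2+2a_{12}b_1b_2(a_{22}-a_{11})=-2h^2/(1+\triangle)^3$) before letting $\nu\to\infty$; this sidesteps the bookkeeping hazard you flag, and in particular shows that for $\Sigma_{12}$ each of the three summands already tends to zero individually---no leading-order cancellation is needed, contrary to what you anticipate. Second, the paper's computation gives $M(h,\tfrac12)=(\epsilon/2)I_2$ \emph{exactly for every $h>0$}, so the midpoint rate function equals $(p^2+q^2)/\epsilon$ for all $h$, not merely in the limit $h\to0$; your statement ``$M(h,1/2)\to(\epsilon/2)I_2$ as $h\to0$'' is weaker than what your own asymptotics already deliver.
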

\begin{proof}
Let $h>0$ be fixed and $\nu>2$. Then Lemma \ref{lem5.2} indicates that the method \eqref{stheta} admits the unique invariant measure $\mcal N(0,\Sigma_{\theta})$. Let   $Y_h^\nu$ obey the distribution $\mu_{\nu,\epsilon}^h=\mcal N(0,\Sigma_\theta)$. Introducing $a(\nu,\epsilon,h):=\frac{\epsilon \nu h}{2(\lambda_2-\lambda_1)^2}$, then for any $y=(y_1,y_2)^\top\in\mbb R^2$, the logarithmic moment generating function of $\{Y_h^\nu\}_{\nu>0}$ is
	\begin{align}\nonumber\label{Yhnu}
	&\Lambda^h(y)=\lim_{\nu\to\infty}\nu^{-1}\ln \mbf Ee^{\nu\langle Y_h^\nu,\,y\rangle}=\lim_{\nu\rightarrow\infty}\nu^{-1}\left[\frac{\nu^2}{2}\mbf {Var}\langle Y_h^\nu,\,y\rangle\right]=\lim_{\nu\rightarrow\infty}\frac{\nu}{2}y^\top\Sigma_{\theta} y\\
	=&\lim_{\nu\rightarrow\infty}a(\nu,\epsilon,h)\Sigma_{11}y_1^2+\lim_{\nu\rightarrow\infty}2a(\nu,\epsilon,h)\Sigma_{12}y_1y_2+\lim_{\nu\rightarrow\infty}a(\nu,\epsilon,h)\Sigma_{22}y_2^2.
	\end{align}
	Notice that 
	\begin{align}\label{apprnu}
	\lim_{\nu\rightarrow\infty}\frac{\sqrt{\nu^2-4}}{\nu}=1,~\text{and}~\lim_{\nu\rightarrow\infty}\frac{\nu-\sqrt{\nu^2-4}}{\frac{2}{\nu}}=1.
	\end{align}
	Now we proceed to calculate the above three limits in \eqref{Yhnu}. A direct computation leads to
	\begin{align}\label{avh}
	&a(\nu,\epsilon,h)=\frac{\epsilon\nu(1+\triangle)^2}{2(\nu^2-4)h},~~a_{11}-\lambda_{1,2}=\frac{-\nu\mp\sqrt{\nu^2-4}}{2(1+\triangle)}h,\\\label{1-lam}
	&1-\lambda_{1,2}=\frac{(\nu\mp\sqrt{\nu^2-4})h+2\theta h^2}{2(1+\triangle)},\\\label{1+lam}
	&1+\lambda_{1,2}=\frac{4+\nu h(4\theta-1)+2h^2\theta(2\theta-1)\pm\sqrt{\nu^2-4}h}{2(1+\triangle)}.
	\end{align}
	
	\textit{Step 1: We prove that for $\theta\in[\frac{1}{2},1]$,
		$\lim_{\nu\rightarrow\infty}a(\nu,\epsilon,h)\Sigma_{11}=\frac{\epsilon}{4}\mathbf{1}_{\{\theta=1/2\}}.$}
	
	By \eqref{avh},
	$a_{12}b_2+b_1(a_{11}-\lambda_2)=\frac{(\sqrt{\nu^2-4}-\nu)h-2\theta h^2}{2(1+\triangle)^2},$
	which together with \eqref{1-lam} and \eqref{1+lam} yields that
	\begin{align*}
	\Psi_{111}:=&\frac{a(\nu,\epsilon,h)}{(1-\lambda_1)(1+\lambda_1)}\Big(a_{12}b_2+b_1(a_{11}-\lambda_2)\Big)^2\\
	=&\frac{\epsilon\nu}{2(\nu^2-4)h}\cdot\frac{(\sqrt{\nu^2-4}-\nu)h-2\theta h^2}{(\nu-\sqrt{\nu^2-4})h+2\theta h^2}\cdot\frac{(\sqrt{\nu^2-4}-\nu)h-2\theta h^2}{
		4+\nu h(4\theta-1)+2h^2\theta(2\theta-1)+\sqrt{\nu^2-4}h}.
	\end{align*}
	By using \eqref{apprnu}, we have
	\begin{equation*} 
	\lim_{\nu\rightarrow\infty}\Psi_{111}=
	\lim_{\nu\rightarrow\infty}\frac{\nu\epsilon}{2\nu^2 h}\cdot (-1)\cdot\frac{-2\theta h^2}{\nu h(4\theta-1)+\nu h}=0. \\
	\end{equation*}
	Similarly, by combining 
	$a_{12}b_2+b_1(a_{11}-\lambda_1)=\frac{-(\sqrt{\nu^2-4}+\nu)h-2\theta h^2}{2(1+\triangle)^2},$
	\eqref{1-lam} and \eqref{1+lam}, we obtain
	\begin{align*}
	\Psi_{112}:=&\frac{a(\nu,\epsilon,h)}{(1-\lambda_2)(1+\lambda_2)}\Big(a_{12}b_2+b_1(a_{11}-\lambda_1)\Big)^2\\
	=&\frac{\epsilon\nu}{2(\nu^2-4)h}\cdot\frac{-(\sqrt{\nu^2-4}+\nu)h-2\theta h^2}{(\nu+\sqrt{\nu^2-4})h+2\theta h^2}\cdot\frac{-(\sqrt{\nu^2-4}+\nu)h-2\theta h^2}{
		4+\nu h(4\theta-1)+2h^2\theta(2\theta-1)-\sqrt{\nu^2-4}h}.
	\end{align*}
	Therefore, by \eqref{apprnu},
	\begin{equation*} 
	\lim_{\nu\rightarrow\infty}\Psi_{112}=\left\{
	\begin{split}
	&\lim_{\nu\rightarrow\infty}\frac{\nu\epsilon}{2\nu^2 h}\cdot (-1)\cdot\frac{-2\nu h}{4}=\frac{\epsilon}{4},&&\text{if}\quad\theta=1/2, \\
	&\lim_{\nu\rightarrow\infty}\frac{\nu\epsilon}{2\nu^2 h}\cdot (-1)\cdot\frac{-2\nu h}{\nu h(4\theta-1)-\nu h}=0,&&\text{if}\quad\theta\in(1/2,1]. \\
	\end{split}
	\right.
	\end{equation*}
	In addition,
	%
	by the expression of $det(A_\theta)$, we have
	\begin{align}\label{1-lam12}
	&1-\lambda_1\lambda_2=1-det(A_\theta)=\frac{(2\theta-1)h^2+\nu h}{1+\triangle},
	\end{align}
	which together with 
	$2a_{12}a_{21}b_1^2-2a_{12}^2b_2^2+2a_{12}b_1b_2(a_{22}-a_{11})=\frac{-2h^2}{(1+\triangle)^3}$
	gives that
	\begin{align*}
	\Psi_{113}:=\frac{a(\nu,\epsilon,h)}{1-\lambda_1\lambda_2}\Big(2a_{12}a_{21}b_1^2-2a_{12}^2b_2^2+2a_{12}b_1b_2(a_{22}-a_{11})\Big)=\frac{\epsilon\nu}{2(\nu^2-4)h}\cdot\frac{-2h}{(2\theta-1)h+\nu}.
	\end{align*}
	Accordingly,  for $\theta\in[1/2,1]$,
	\begin{align*}
	&\lim_{\nu\rightarrow\infty}\Psi_{113}=\lim_{\nu\rightarrow\infty}\frac{\nu\epsilon}{2\nu^2 h}\cdot\frac{-2 h}{(2\theta-1)h+\nu}=0.
	\end{align*}
	Combining the above estimates and \eqref{Sigma11}, we obtain 
	\begin{equation*}
	\lim_{\nu\rightarrow\infty}a(\nu,\epsilon,h)\Sigma_{11}=\lim_{\nu\rightarrow\infty}\Psi_{111}+\lim_{\nu\rightarrow\infty}\Psi_{112}+\lim_{\nu\rightarrow\infty}\Psi_{113}=\frac{\epsilon}{4}\mathbf{1}_{\{\theta=1/2\}}.
	\end{equation*}

	\textit{Step 2: We prove that 
		$\lim_{\nu\rightarrow\infty}a(\nu,\epsilon,h)\Sigma_{22}=\frac{\epsilon}{4}.$}
	
	In fact,  from the identity
	$a_{21}b_1+b_2(\lambda_1-a_{11})=\frac{\big(\sqrt{\nu^2-4}+\nu\big)\theta h^2+2h}{2(1+\triangle)^2},$
	it follows that
	\begin{align*}
	\Psi_{221}:=&\frac{a(\nu,\epsilon,h)}{(1-\lambda_1)(1+\lambda_1)}\Big(a_{21}b_1+b_2(\lambda_1-a_{11})\Big)^2\\
	=&\frac{\epsilon\nu}{2(\nu^2-4)h}\cdot\frac{\big(\sqrt{\nu^2-4}+\nu\big)\theta h^2+2h}{(\nu-\sqrt{\nu^2-4})h+2\theta h^2}\cdot\frac{\big(\sqrt{\nu^2-4}+\nu\big)\theta h^2+2h}{
		4+\nu h(4\theta-1)+2h^2\theta(2\theta-1)+\sqrt{\nu^2-4}h}.
	\end{align*}
	Hence, by \eqref{apprnu},
	\begin{equation*}
	\lim_{\nu\rightarrow\infty}\Psi_{221}=
	\lim_{\nu\rightarrow\infty}\frac{\nu\epsilon}{2\nu^2 h}\cdot \frac{2\nu\theta h^2}{2\theta h^2}\cdot\frac{2\nu\theta h^2}{4\nu\theta h}=\frac{\epsilon}{4}.
	\end{equation*}
	Similarly, since
	$a_{21}b_1+b_2(\lambda_2-a_{11})=\frac{(\nu-\sqrt{\nu^2-4})\theta h^2+2h}{2(1+\triangle)^2},$
	we arrive at
	\begin{align*}
	\Psi_{222}:=&\frac{a(\nu,\epsilon,h)}{(1-\lambda_2)(1+\lambda_2)}\Big(a_{21}b_1+b_2(\lambda_2-a_{11})\Big)^2\\
	=&\frac{\epsilon\nu}{2(\nu^2-4)h}\cdot\frac{(\nu-\sqrt{\nu^2-4})\theta h^2+2h}{(\nu+\sqrt{\nu^2-4})h+2\theta h^2}\cdot\frac{(\nu-\sqrt{\nu^2-4})\theta h^2+2h}{
		4+\nu h(4\theta-1)+2h^2\theta(2\theta-1)-\sqrt{\nu^2-4}h},
	\end{align*}
	which together with \eqref{apprnu} implies that
	\begin{equation*}
	\lim_{\nu\rightarrow\infty}\Psi_{222}=\left\{
	\begin{split}
	&\lim_{\nu\rightarrow\infty}\frac{\nu\epsilon}{2\nu^2 h}\cdot \frac{2h}{2\nu h}\cdot\frac{2h}{4}=0,&&\text{if}\quad\theta=1/2, \\
	&\lim_{\nu\rightarrow\infty}\frac{\nu\epsilon}{2\nu^2 h}\cdot \frac{2h}{2\nu h}\cdot\frac{2h}{\nu h(4\theta-2)}=0,&&\text{if}\quad\theta\in(1/2,1]. 
	\end{split}
	\right.
	\end{equation*}
	By noticing that for stochastic-$\theta$ scheme \eqref{stheta}, $a_{21}=-a_{12}$, we have
	\begin{align*}
	\Psi_{223}:=\frac{a(\nu,\epsilon,h)}{1-\lambda_1\lambda_2}\Big(2a_{12}a_{21}b_2^2-2a_{21}^2b_1^2-2a_{21}b_1b_2(a_{22}-a_{11})\Big)
	=\Psi_{113},
	\end{align*}
	and thereby 
	$\lim_{\nu\rightarrow\infty}\Psi_{223}=0.$ This together with \eqref{Sigma22} completes the proof of \textit{Step 2}.
	
	\textit{Step 3: We prove that 
		$\lim_{\nu\rightarrow\infty}a(\nu,\epsilon,h)\Sigma_{12}=
		0.$}
		
	Recall the definitions of $S_i,\,i=1,2,3$ given by \eqref{S1}-\eqref{S3}. Then a direct calculation gives that
	\begin{align*}
	&S_1=\frac{(\sqrt{\nu^2-4}-\nu)h^2-4\theta h^3-\theta^2(\sqrt{\nu^2-4}+\nu)h^4}{2(1+\triangle)^4},\\
	&S_2=\frac{-(\sqrt{\nu^2-4}+\nu)h^2-4\theta h^3-\theta^2(\nu-\sqrt{\nu^2-4})h^4}{2(1+\triangle)^4},\\
	&S_3=\frac{\nu h}{1+\triangle}\left\{\frac{h+h^3\theta^2}{(1+\triangle)^3}+\frac{\nu\theta h^2}{(1+\triangle)^3}\right\}=\frac{\nu h^2}{(1+\triangle)^3}.
	\end{align*}
	where we used $\eqref{avh}$  in the first two equalities,  and the facts that $a_{22}-a_{11}=\frac{\nu h}{1+\triangle},~a_{21}b_1^2-a_{12}b_2^2=\frac{h+h^3\theta^2}{(1+\triangle)^3}$
	in the last equality. Further, by \eqref{Sigma12}, we rewrite $2a(\nu,\epsilon,h)\Sigma_{12}=\Psi_{121}+\Psi_{122}+\Psi_{123}$ with
	\begin{align*}
	\Psi_{121}:=\frac{2a(\nu,\epsilon,h)}{1-\lambda_1^2}S_1,~
	\Psi_{122}:=\frac{2a(\nu,\epsilon,h)}{1-\lambda_2^2}S_2,~
	\Psi_{123}:=\frac{2a(\nu,\epsilon,h)}{1-\lambda_1\lambda_2}S_3.
	\end{align*}
	By applying \eqref{1-lam}, \eqref{1+lam} and \eqref{1-lam12}, we derive that
	
	\begin{align*}
	&\Psi_{121}
	=\frac{2\epsilon\nu}{(\nu^2-4)h}\cdot\frac{(\sqrt{\nu^2-4}-\nu)h^2-4\theta h^3-\theta^2(\sqrt{\nu^2-4}+\nu)h^4}{\Big((\nu-\sqrt{\nu^2-4})h+2\theta h^2\Big)\times \Big(4+\nu h(4\theta-1)+2h^2\theta(2\theta-1)+\sqrt{\nu^2-4}h\Big)},\\
	&\Psi_{122}
	=\frac{2\epsilon\nu}{(\nu^2-4)h}\cdot\frac{-(\sqrt{\nu^2-4}+\nu)h^2-4\theta h^3-\theta^2(\nu-\sqrt{\nu^2-4})h^4}{\Big((\nu+\sqrt{\nu^2-4})h+2\theta h^2\Big)\times \Big(4+\nu h(4\theta-1)+2h^2\theta(2\theta-1)-\sqrt{\nu^2-4}h\Big)},\\
	&\Psi_{123}
	=\frac{\epsilon\nu}{(\nu^2-4)h}\cdot\frac{\nu h^2}{(2\theta-1)h^2+\nu h}.
	\end{align*}
	Putting $\nu\rightarrow \infty$ in the above formulas and applying \eqref{apprnu} yield that 
\begin{align*}
	&\lim_{\nu\rightarrow\infty}\Psi_{121}=\lim_{\nu\rightarrow\infty}\frac{2\nu\epsilon}{\nu^2 h}\cdot \frac{-2\nu\theta^2h^4}{2\theta h^2\cdot4\nu \theta h}=0.\\
	&\lim_{\nu\rightarrow\infty}\Psi_{122}=
	\begin{cases}
	\lim_{\nu\rightarrow\infty}\frac{2\nu\epsilon}{\nu^2 h}\cdot \frac{-2\nu h^2}{2\nu h\cdot 4}=0,\qquad\qquad\,\,\text{if}\quad\theta=1/2, \\
	\lim_{\nu\rightarrow\infty}\frac{2\nu\epsilon}{\nu^2 h}\cdot \frac{-2\nu h^2}{
		2\nu h\cdot\nu h(4\theta-2)}=0,\qquad\text{if}\quad\theta\in(1/2,1]. 
	\end{cases}\\
	&\lim_{\nu\rightarrow\infty}\Psi_{123}=\lim_{\nu\rightarrow\infty}\frac{\epsilon\nu}{\nu^2 h}\cdot\frac{\nu h^2}{\nu h}=0.
	\end{align*}
	This completes the proof of \textit{Step 3}.

	Substituting the assertions of \textit{Step 1}-\textit{Step 3} into \eqref{Yhnu} produces 
	\begin{equation*}
	\Lambda^h(y)=\left\{
	\begin{split}
	&\frac{\epsilon}{4}y_1^2+\frac{\epsilon}{4}y_2^2,&&\text{if}\quad\theta=1/2, \\
	&\frac{\epsilon}{4}y_2^2,&&\text{if}\quad\theta\in(1/2,1]. 
	\end{split}
	\right.
	\end{equation*}
	Similar to \textit{Step 2} in the proof of Theorem \ref{tho4.4}, the exponential tightness of $\{\mu_{\nu,\epsilon}^h\}_{\nu>0}$ follows from the finiteness of $\Lambda^h$.
	It is apparently that $\Lambda^h$ is finite valued and Gateaux differentiable. Hence, we apply Theorem \ref{GE} to concluding that 
	for any $h>0$,
	$\{\mu_{\nu,\epsilon}^h\}_{\nu>0}$ satisfies an LDP with a good rate function. Moreover, for $\theta=1/2$, the corresponding rate function is 
	$$J^h(x)=(p^2+q^2)/\epsilon, $$
	which is exactly the rate function of $\{\mu_{\nu,\epsilon}\}_{\nu>0}$,
	and for $\theta\neq1/2$,  the corresponding rate function is 
	\begin{equation*}
	J^h(x)=\left\{
	\begin{split}
	&q^2/\epsilon,&&\text{if}\quad p=0, \\
	&\infty,&&\text{if}\quad p\neq0, 
	\end{split}
	\right.
	\end{equation*}
	where $x=(p,q)^\top\in\mbb R^2$. This proof is finished.
\end{proof}

\section{Conclusions and future work}\label{Sec6}
In this paper, the relationship between  the LDPs of invariant measure for Langevin equation \eqref{Lan} and the LDPs of invariant measures of its numerical approximations is studied. Aiming at the case that $V$ is a confining potential growing faster than some power function  at infinity, i.e., \textbf{Assumption 1} holds, we prove that the unique invariant measure $\{\mu_{\nu,\epsilon}\}$ of exact solution satisfies two kinds of LDPs in the strong dissipation limit and the small noise limit respectively. 
Then, for the linear case with  $V(q)=\frac{1}{2}q^2$, we prove that a large class of  numerical methods can asymptotically preserve the LDP of  $\{\mu_{\nu,\epsilon}\}_{\epsilon>0}$ in the small noise limit. Finally, we show that the midpoint scheme can asymptotically preserve the LDP of  $\{\mu_{\nu,\epsilon}\}_{\nu>0}$ in the strong dissipation limit. Our results indicate that different numerical methods will show the differences in preserving the different types of LDPs for the underlying systems. 

We would like to mention that our results can be generalized to the high dimensional case. In details, let \eqref{Lan} be the $2d$-dimensional Langevin equation, $V:\mbb R^d\to \mbb R$ be a confining potential and $W=(W_1,W_2,\ldots,W_d)$ be a $d$-dimensional Wiener process with $d$ being a given positive integer. Under \textbf{Assumption 1}, analogous to the arguments in Section \ref{Sec3}, one can prove that the invariant measure of the exact solution still satisfies the two LDPs in the small noise limit and strong dissipation limit respectively. As for the LDP of invariant measures of numerical solutions for \eqref{Lan} with $V(q)=\frac{1}{2}|q|^2$, we note that \eqref{Lan} can be divided into the following $d$ subsystems 
\begin{align*}\label{subLan}
\ud \begin{pmatrix}
P^{k}(t)\\
Q^{k}(t)
\end{pmatrix}=
\begin{pmatrix}
-\nu&-1\\
1&0
\end{pmatrix}
\begin{pmatrix}
P^{k}(t)\\
Q^{k}(t)
\end{pmatrix}\ud t+\sqrt{\epsilon}
\begin{pmatrix}
1\\
0
\end{pmatrix}\ud W_k(t),\qquad k=1,2,\ldots,d.
\end{align*}  
Then, we obtain the numerical method $\{(P_n,Q_n)\}_{n\geq0}$ with its $k$th component given by
\begin{align*}
\left(\begin{array}{c}
P_{n+1}^k\\\\
Q_{n+1}^k
\end{array}\right)=
\left(
\begin{array}{cc}
a_{11}(h)&a_{12}(h)\\\\
a_{21}(h)&a_{22}(h)
\end{array}\right)
\left(\begin{array}{cc}
P_n^k\\\\
Q_n^k
\end{array}\right)
+\sqrt{\epsilon} 
\left(\begin{array}{cc}
b_1(h)\\\\
b_2(h)
\end{array}\right)\Delta W_{k,n},\qquad n=0,1,2,\ldots
\end{align*}
with $(P_0,Q_0)=(P(0),Q(0))$, where $\Delta W_{k,n}= W_k(t_{n+1})-W_k(t_n)$ with $t_n=nh$, $n=1,2,\ldots$, and  $a_{ij},\,b_i: (0,\infty)\to\mbb R$, $i,j=1,2$ are  the functions of step-size $h$ and determined by a concrete method. Using the same analyses as those in Section \ref{Sec4}, one can show the asymptotical preservation of numerical methods for the LDP of the invariant measures of the exact solution  in the small noise limit.

There are some problems which remain to be solved. For the linear case with $V(q)=\frac{1}{2}q^2$, how to derive the LDP of invariant measure of general numerical methods as $\nu\to\infty$, and whether other numerical method except the midpoint scheme can asymptotically preserve the LDP of $\{\mu_{\nu,\epsilon}\}_{\nu>0}$ as $\nu\to\infty$. For the general confining potential $V$, how to construct numerical methods which possess a unique invariant measure and further derive the LDP of their invariant measures. These problems will be studied in our future work.

\bibliographystyle{plain}
\bibliography{mybibfile}

\begin{thebibliography}{10}

\bibitem{Brehier}
C.-E. Br\'{e}hier and G.~Vilmart.
\newblock High order integrator for sampling the invariant distribution of a
  class of parabolic stochastic {PDE}s with additive space-time noise.
\newblock {\em SIAM J. Sci. Comput.}, 38(4):A2283--A2306, 2016.

\bibitem{ZC15}
Z.~Brze\'{z}niak and S.~Cerrai.
\newblock Large deviations principle for the invariant measures of the 2{D}
  stochastic {N}avier-{S}tokes equations on a torus.
\newblock {\em J. Funct. Anal.}, 273(6):1891--1930, 2017.

\bibitem{GGMZ}
P.~Cattiaux, A.~Guillin, P.~Monmarch\'{e}, and C.~Zhang.
\newblock Entropic multipliers method for {L}angevin diffusion and weighted log
  {S}obolev inequalities.
\newblock {\em J. Funct. Anal.}, 277(11):108288, 24, 2019.

\bibitem{Rockner04}
S.~Cerrai and M.~R\"{o}ckner.
\newblock Large deviations for invariant measures of stochastic
  reaction-diffusion systems with multiplicative noise and non-{L}ipschitz
  reaction term.
\newblock {\em Ann. Inst. H. Poincar\'{e} Probab. Statist.}, 41(1):69--105,
  2005.

\bibitem{LDPschro}
C.~Chen, J.~Hong, D.~Jin, and L.~Sun.
\newblock Large deviations principles for symplectic discretizations of
  stochastic linear {S}chr\"odinger equation.
\newblock {\em arXiv:2006.01357}.

\bibitem{LDPosc}
C.~Chen, J.~Hong, D.~Jin, and L.~Sun.
\newblock The probabilistic superiority of stochastic symplectic methods via
  large deviations principles.
\newblock {\em arXiv:1906.03451}.

\bibitem{ChenX}
X.~Chen.
\newblock {\em Random walk intersections. Large deviations and related topics},
  volume 157 of {\em Mathematical Surveys and Monographs}.
\newblock American Mathematical Society, Providence, RI, 2010.

\bibitem{Langevin2}
W.~T. Coffey and Y.~P. Kalmykov.
\newblock {\em The {L}angevin equation}, volume~27 of {\em World Scientific
  Series in Contemporary Chemical Physics}.
\newblock World Scientific Publishing Co. Pte. Ltd., Hackensack, NJ, third
  edition, 2012.

\bibitem{Dembo}
A.~Dembo and O.~Zeitouni.
\newblock {\em Large deviations techniques and applications}, volume~38 of {\em
  Stochastic Modelling and Applied Probability}.
\newblock Springer-Verlag, Berlin, 2010.
\newblock Corrected reprint of the second (1998) edition.

\bibitem{GPP13}
S.~Gadat, F.~Panloup, and C.~Pellegrini.
\newblock Large deviation principle for invariant distributions of memory
  gradient diffusions.
\newblock {\em Electron. J. Probab.}, 18:no. 81, 34, 2013.

\bibitem{Langevin1}
D.~T. Gillespie.
\newblock The chemical langevin equation.
\newblock {\em Journal of Chemical Physics}, 113(1):297--306, 2000.

\bibitem{HSW}
J.~Hong, L.~Sun, and X.~Wang.
\newblock High order conformal symplectic and ergodic schemes for the
  stochastic {L}angevin equation via generating functions.
\newblock {\em SIAM J. Numer. Anal.}, 55(6):3006--3029, 2017.

\bibitem{HongW}
J.~Hong and X.~Wang.
\newblock {\em Invariant measures for stochastic nonlinear {S}chr\"{o}dinger
  equations: numerical approximations and symplectic structures}, volume 2251
  of {\em Lecture Notes in Mathematics}.
\newblock Springer, Singapore, 2019.

\bibitem{MaXi18}
X.~Ma and F.~Xi.
\newblock Large deviations for invariant measures of stochastic differential
  equations with jumps.
\newblock {\em Stochastics}, 91(4):528--552, 2019.

\bibitem{Mattingly}
J.~C. Mattingly, A.~M. Stuart, and D.~J. Higham.
\newblock Ergodicity for {SDE}s and approximations: locally {L}ipschitz vector
  fields and degenerate noise.
\newblock {\em Stochastic Process. Appl.}, 101(2):185--232, 2002.

\bibitem{Linear}
S.~V. Meleshko and E.~Schulz.
\newblock Linearization of a second-order stochastic ordinary differential
  equation.
\newblock {\em J. Nonlinear Math. Phys.}, 18(3):427--441, 2011.

\bibitem{Langevin3}
G.~N. Milstein and M.~V. Tretyakov.
\newblock Quasi-symplectic methods for {L}angevin-type equations.
\newblock {\em IMA J. Numer. Anal.}, 23(4):593--626, 2003.

\bibitem{Fokker}
G.~A. Pavliotis.
\newblock {\em Stochastic processes and applications}, volume~60 of {\em Texts
  in Applied Mathematics}.
\newblock Springer, New York, 2014.

\bibitem{Sower92}
R.~Sowers.
\newblock Large deviations for the invariant measure of a reaction-diffusion
  equation with non-{G}aussian perturbations.
\newblock {\em Probab. Theory Related Fields}, 92(3):393--421, 1992.

\bibitem{Talay}
D.~Talay.
\newblock Stochastic {H}amiltonian systems: exponential convergence to the
  invariant measure, and discretization by the implicit {E}uler scheme.
\newblock volume~8, pages 163--198. 2002.

\bibitem{LDP08}
S.~R.~S. Varadhan.
\newblock Large deviations.
\newblock {\em Ann. Probab.}, 36(2):397--419, 2008.

\end{thebibliography}

\end{document}